\newcommand{\bS}{\mathbb S}
\newcommand{\CC}{\mathbb C}
\newcommand{\NN}{\mathbb N}
\newcommand{\RR}{\mathbb R}
\newcommand{\ZZ}{\mathbb Z}
\newcommand{\del}{\partial}
\newcommand{\phg}{{\mathrm{phg}}}
\newcommand{\be}{\beta}
\newcommand{\calA}{{\mathcal A}}
\newcommand{\calC}{{\mathcal C}}
\newcommand{\calD}{{\mathcal D}}
\newcommand{\calE}{{\mathcal E}}
\newcommand{\calG}{{\mathcal G}}
\newcommand{\calI}{{\mathcal I}}
\newcommand{\calJ}{{\mathcal J}}
\newcommand{\calK}{{\mathcal K}}
\newcommand{\calO}{{\mathcal O}}
\newcommand{\calQ}{{\mathcal Q}}
\newcommand{\calT}{{\mathcal T}}
\newcommand{\calU}{{\mathcal U}}
\newcommand{\calV}{{\mathcal V}}
\newcommand{\frakc}{\mathfrak c}
\newcommand{\frakp}{\mathfrak p}
\newcommand{\frakq}{\mathfrak q}
\newcommand{\frakr}{\mathfrak r}
\newcommand{\frakC}{\mathfrak C}
\newcommand{\conic}{\mathrm{conic}}
\newcommand{\wt}{\widetilde}
\newcommand{\wh}{\widehat}
\newcommand{\reg}{\mathrm{reg}}
\newtheorem{theorem}{Theorem}
\newtheorem*{theorem*}{Theorem}
\newtheorem{proposition}{Proposition}
\newtheorem{lemma}{Lemma}
\theoremstyle{definition}
\newtheorem{definition}{Definition}
\theoremstyle{remark}
\newtheorem*{remark}{Remark}
\title{Conical metrics on Riemann surfaces, I: the compactified configuration space and regularity}
\author{Rafe Mazzeo \\ Stanford University \and Xuwen Zhu
\\ Stanford University}
\date{}
\begin{document}
\maketitle

\begin{abstract}
We introduce a compactification of the space of simple positive divisors on a Riemann surface, as well as a compactification
of the universal family of punctured surfaces above this space. These are real manifolds with corners.  We then study
the space of constant curvature metrics on this Riemann surface with prescribed conical singularities at these divisors.
Our interest here is in the local deformation for these metrics, and in particular the behavior as conic points coalesce.
We prove a sharp regularity theorem for this phenomenon in the regime where these metrics are known to exist.
This setting will be used in a subsequent paper to study the space of spherical conic metrics with large cone angles,
where the existence theory is still incomplete. 
\end{abstract}

\tableofcontents

\section{Introduction}
This paper is a sequel to \cite{MW} by the first author and Weiss concerning the space of metrics with constant curvature and
prescribed conic singularities on a compact Riemann surface $M$.  In that paper a careful analysis was made of the deformation
theory for such metrics provided all the cone angles are less than $2\pi$. This assumption simplifies both the analytic and the
geometric considerations considerably.   Consider the space of tuples $(\frakc, \frakp, \vec{\beta}, K, A)$, where $\frakc$ is a conformal 
structure on $M$, $\frakp$ a collection of $k$ distinct points, $\vec{\beta}$ a $k$-tuple of parameters prescribing the cone angles at 
the points $p_j$, with each $\beta_j \in (0,1)$ (this corresponds to all cone angles lying in $(0,2\pi)$), and constant $K$ specifying 
the Gauss curvature and $A > 0$ specifying the area, all subject to the requirement forced by Gauss-Bonnet that 
\begin{equation}\label{e:gb}
\chi(M, \vec{\beta}) := \chi(M) + \sum_{j=1}^k (\beta_j-1) = \frac{1}{2\pi} K A.
\end{equation}
It is known through the work of several authors that to each such tuple there exists a unique metric on $M$ which has constant curvature $K$,
area $A$, and conical singularities at the points $p_j$ with cone angle $2\pi\beta_j$. There is a caveat when $K > 0$ and $k>2$
which states that in this case an extra condition is needed on the cone angles, namely that they satisfy the so-called Troyanov condition
\begin{equation}\label{e:Troyanov}
\min\{2,2\beta_j\} + k- \chi(M)> \sum_{i=1}^{k} \beta_i, \qquad j = 1, \ldots, k,
\end{equation}
which is trivial when restricted to $\vec\beta \in (0,1)^{k}$ except when $M=S^{2}$.
The main result of \cite{MW} states that the Teichm\"uller space $\calT_{\gamma, k}^{\conic}$ of all such solutions moduli the space of
diffeomorphisms of $M$ isotopic to the identity is a smooth manifold.

It is known that the situation becomes much more complicated when some or all of the $\beta_j$ are greater than $1$, at least
in the case that $K > 0$.  One classical inspiration to study this case is when each $\beta_j \in \mathbb N$, i.e., all cone angles 
are integer multiples of $2\pi$, in which case examples are easily obtained as ramified covers over other compact surfaces with 
metrics of constant curvature.  Existence and uniqueness of spaces with arbitrary cone angles and curvature $K \leq 0$ subject to~\eqref{e:gb} is relatively easy, see \cite{Mc}. Much more recent is the dramatic breakthrough by Mondello and Panov \cite{MP}, 
which establishes through beautiful and purely geometric reasoning necessary and sufficient conditions on the possible set of values 
$\vec{\beta}$ for which there exists a metric with constant curvature $1$ (a spherical metric) on $S^2$ with these prescribed cone angle 
parameters.   

This last-cited paper leaves open some fundamental questions. The one which interests us here is to describe the space of
points $\frakp$ and cone angle parameters $\vec{\beta}$ for which there exist spherical metrics with this data prescribing
the conic singularities.  The answer is complicated and (at least to our understanding) not completely explicit.  An initial
hope might be to show that the space of all solutions (mod diffeomorphisms) is a smooth manifold. From this one might
then further try to apply various techniques from geometric analysis to count solutions. 
%  then we could prove that the projection
% map from this space to the space of configurations $(\frakp, \vec{\beta})$ can be shown to be a proper Fredholm map 
% of degree $0$. This would then guarantee the existence of a $\mathbb Z$-valued degree, and then lead to a robust
% existence theory which includes a signed count of solutions. 
Unfortunately, for spherical cone metrics with cone angles greater than $2\pi$, this space fails to be smooth on certain subvarieties.
One of our goals, which will be addressed in a sequel to this paper,  is to understand this failure more precisely.  
Briefly, however, the key observation is that if $g$ is a spherical metric for which the deformation theory is obstructed, it is possible
to consider this solution in a larger moduli space where the deformation theory is unobstructed. This broader setting consists
of letting certain of the cone points $p_j$ split into clusters of cone points with smaller angles. This is an analytic
manifestation of one of the important steps in the geometric arguments of Mondello and Panov \cite{MP}. 

The analysis needed to carry this out turns out to be somewhat complicated and requires the development of some machinery
which will occupy a significant part of this paper.  We regard this machinery of independent interest, and expect that it may 
be a useful tool in studying various other analytic problems involving geometric objects which are singular or otherwise
distinguished at families of points which can cluster. We mention in particular the study of solutions of the two-dimensional
vortex equation on a Riemann surface, as well as the study of analytic constructions related to holomorphic quadratic
differentials in relationship to the Hitchin moduli space.  

\subsection{Outline of results}
This paper has two main parts. In the first part, \S\ref{s:resolution}, we develop these general ideas, which involve the construction of a resolution
via real blow-up of the configuration space of $k$ points on $M$ and of the universal family of marked surfaces over this
blown-up configuration space.  Similar constructions are classical in algebraic geometry if one uses complex blowups, but
our use of real blow-ups and other $\calC^\infty$ methods here lead to spaces which are compact manifolds with corners 
which encode the different modes of clustering of these $k$ points.  This construction is closely related to other
recent work, notably the ongoing work of Kottke and Singer \cite{KS} on the compactification of the 
moduli space of monopoles in $\RR^3$. We describe the construction of the extended configuration space $\calE_{k}$ (the base manifold) in \S\ref{ss:Ek}, and the resolution of the universal family $\calC_{k}$ (the total space) in \S\ref{ss:Ck}. We then give the two simplest examples when $k=2$ or $3$ in~\S\ref{ss:k2} and \S\ref{ss:k3}.  In~\S\ref{ss:bd} we give a description of the combinatorial structure of the boundary faces for the generic $k$-point case, and in particular show that we obtain a b-fibration. 

In the second part we consider the space of metrics with constant curvature and prescribed conic singularities;
in this paper we restrict attention to flat and hyperbolic metrics with no angle constraints, and spherical metrics with cone angles less than $2\pi$.  Our main theorem here is a new regularity result, which we give a sketch below, and refer to Proposition~\ref{flat}, Theorem~\ref{t:hyp} and Theorem~\ref{t:sph} for the precise statements.
\begin{theorem*}
The family of hyperbolic or flat metrics with conic singularities with arbitrary cone angle, or spherical conic surfaces with cone angles less than $2\pi$ and satisfying the
Troyanov constraint, lifts to be 
polyhomogeneous, a natural generalization of smoothness, on the compactified universal family of curves $\calC_{k}$ over this 
extended configuration space $\calE_{k}$. 
\end{theorem*}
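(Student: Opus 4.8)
The plan is to reduce the statement to a regularity question for a single semilinear elliptic equation on the fibres of the b-fibration $\calC_{k}\to\calE_{k}$ constructed in \S\ref{ss:bd}, and then to bootstrap. First I would fix a reference metric $g_{0}$ on the universal family that is already polyhomogeneous on $\calC_{k}$ and which near the $j$-th cone divisor has the exact conic form $|z|^{2(\beta_{j}-1)}|dz|^{2}$. In the flat case one can take $g_{0}$ to be a smooth family of flat conic metrics: writing $g_{0}=e^{2\phi}h$ for a fixed smooth background metric $h$, the conformal factor is $\phi=2\pi\sum_{j}(\beta_{j}-1)\,G_{h}(\,\cdot\,,p_{j})$ plus a fixed smooth correction, where $G_{h}$ is the Green's function of $h$. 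Since $G_{h}(x,q)=\tfrac1{2\pi}\log\mathrm{dist}_{h}(x,q)$ plus a jointly smooth remainder, $G_{h}$ lifts to be polyhomogeneous on the blown-up product $[M\times M;\mathrm{diag}]$, and the real blow-ups defining $\calE_{k}$ and $\calC_{k}$ are precisely engineered so that $\phi$, evaluated with $x$ running over a fibre of $\calC_{k}$ and $(p_{1},\dots,p_{k})$ over $\calE_{k}$, is polyhomogeneous; this is essentially the content of Proposition~\ref{flat}, and a rescaling at a clustering face exhibits the expected flat conic ``bubble'' on $\CC$. Writing now the desired constant curvature metric as $g=e^{2w}g_{0}$, the cone singularities cancel and $w$ becomes a genuine function on the interior of $\calC_{k}$ solving
\[
\Delta_{g_{0}}w = K_{g_{0}} - K\,e^{2w},
\]
with an analogous linear equation in the flat case; since the cone angles are fixed along the family, everything is now a statement about the behaviour of $w$ as clusters of cone points coalesce, i.e.\ as one approaches the boundary hypersurfaces of $\calC_{k}$.

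The second step is a uniform a priori bound, and here I would take $g_{0}$ to be not merely a flat metric but a \emph{model} assembled by gluing, inducting on the number of points and the depth of clustering: on the ``outer'' region $g_{0}$ is the constant curvature conic metric on the surface obtained by collapsing each cluster to a single cone point carrying the summed angle --- a lower-$k$ instance of the theorem, hence polyhomogeneous by induction --- while on each ``bubble'' region, visible after rescaling at the corresponding boundary face, $g_{0}$ is the model flat (or, in the spherical case, the relevant constant curvature) conic metric on $\CC$ attached to that cluster. With such a $g_{0}$ the curvature $K_{g_{0}}$ is uniformly bounded and $g_{0}$ is uniformly conformal to $g$ on compact subsets of the interior, so a maximum principle / barrier argument gives $\|w\|_{L^{\infty}(\calC_{k})}\le C$. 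Uniqueness of the constant curvature conic metric (from the works cited in the introduction) guarantees that the bounded $w$ we control is \emph{the} solution, so no information is lost in passing to the model.

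The third and main step is to upgrade the bound to a full polyhomogeneous expansion. In the coordinates adapted to the blow-ups, $\Delta_{g_{0}}$ is elliptic in the b- (respectively edge-) sense along the fibres of $\calC_{k}\to\calE_{k}$, with indicial operators at each boundary hypersurface governed by the model problems above --- the links being circles or lower-dimensional configuration spaces, their Laplacians are explicit and so are the indicial roots. I would then (i) establish conormality of $w$ by the usual commutator / b-elliptic estimate argument, with the $L^{\infty}$ bound as base case; (ii) compute the indicial roots and check the (non)resonance conditions, tracking the logarithmic terms that the word ``sharp'' commits us to; (iii) iterate the construction of the asymptotic expansion, at each step solving away the leading error with the inverse of the indicial operator --- which, together with the rest of a b-parametrix for $\Delta_{g_{0}}$, preserves the polyhomogeneous spaces. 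The nonlinearity causes no difficulty, since polyhomogeneous functions with bounded real part form an algebra closed under exponentiation, and Melrose's pushforward theorem transfers index sets along the b-fibration. The hyperbolic and spherical cases run on the same template; the only change is that the linearized operator $\Delta_{g}+2K$ must be inverted on the relevant spaces: for $K=-1$ it is invertible outright, while for $K=+1$ with cone angles in $(0,2\pi)$ and the Troyanov condition one invokes the unobstructedness of the conic deformation theory from \cite{MW} to invert it modulo a finite-dimensional, smoothly varying obstruction which does not affect polyhomogeneity.

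I expect the real obstacle to be step three at the \emph{higher-codimension corners} of $\calC_{k}$, where several clustering modes interact: there the model operator is of product (iterated-edge) type, its indicial family depends on the remaining boundary variables, and making the parametrix construction close up --- matching index sets across adjacent faces and controlling the accumulation of logarithms --- is exactly where the combinatorial description of the boundary faces from \S\ref{ss:bd}, and the fact that $\calC_{k}\to\calE_{k}$ is a genuine b-fibration, must be used in earnest.
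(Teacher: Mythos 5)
Your overall skeleton matches the paper's: polyhomogeneity of the flat family via the explicit Green's function (the paper's \S\ref{s:flat}, proved inductively through the cluster decomposition), a glued background metric that is flat near the coalescing cluster and constant curvature (a lower-$k$ solution) outside, the conformal factor solving the Liouville equation fibrewise, and invertibility of the linearization ($\Delta+2$ via the maximum principle for $K=-1$, a spectral gap for $K=+1$). The main procedural difference is the order of operations: the paper first constructs the \emph{complete formal expansion} at each boundary face, matches it at the corners, Borel sums it, and only then produces a correction that is $\calO(\rho^N)$ for every $N$; conormality is obtained by a commutator argument applied to this already rapidly vanishing correction. You propose instead to prove an $L^\infty$ bound for $w$ itself, deduce conormality by commutators, and extract the expansion a posteriori from indicial-root analysis and a b-parametrix. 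As written, the commutator step does not close: with only $\|w\|_{L^\infty}\le C$ the term $[W,\Delta_{g_0}]w$ involves second b-derivatives you do not yet control, and the absorption trick the paper uses (writing $WQ(v)=\lambda Wv$ with $\lambda=\calO(\rho^N)$) is unavailable; one must replace this by scale-invariant elliptic (Schauder) estimates, which is fixable but is not what you wrote. More importantly, your steps (ii)--(iii) presuppose exactly the content that constitutes the heart of the paper's proof and that you defer as ``the real obstacle'': the choice of conformal coordinate making the one-face expansion even in $\frakr$ (Lemma~\ref{l:faceI}), the identification of the index set $\{j+2k\beta\}$ at $\frakC_{12}$ with \emph{no} logarithmic terms, and the harmonic-polynomial matching of coefficients across the corner $M_{\frakp'}\cap\frakC_{12}$, iterated over the tree of faces for general $k$. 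Without this, the ``sharp'' regularity statement is not established.

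The spherical case contains a genuine error. You propose to invert $\Delta_g+2K$ ``modulo a finite-dimensional, smoothly varying obstruction which does not affect polyhomogeneity.'' No such obstruction argument is available or needed: for cone angles in $(0,2\pi)$ under the Troyanov condition the paper uses that the first nonzero Friedrichs eigenvalue is strictly greater than $2$ (Lemma~\ref{l:invertible}, from \cite{MW}), so $\Delta-2$ is invertible outright --- \emph{except} when the limit surface is a football, where the eigenvalue equals $2$ and invertibility fails. This is precisely why Theorem~\ref{t:sph} restricts to the open dense subset $\calC_k'$ excluding degenerations to one or two points, and why the admissible region $\calE_{k,\vec\beta}$ matters: merging does not preserve the Troyanov constraint, so the fibrewise spherical metrics need not even exist over certain boundary faces (no teardrops, and footballs only for equal angles). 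Your proposal ignores both restrictions, and at a football limit neither the implicit function theorem nor a ``smoothly varying obstruction bundle'' is supplied, so the statement as you argue it would fail there. With the restriction to $\calC_k'$ and the admissibility bookkeeping added, and with the corner-matching expansion actually carried out, your plan would reassemble into the paper's proof.
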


%tification as the cone points coalesce, and moreover, the fiberwise family of constant curvature
%metrics is polyhomogeneous on this compactification.   
% and show that the family of constant curvature  metrics in this case is also polyhomogeneous. 
In~\S\ref{s:geometry} we set up the geometric process of merging cone points, described both locally and globally. In~\S\ref{s:analysis} we recall some facts on analysis of conic elliptic operators. \S\ref{s:flat}--\S\ref{s:sph} give the proof of the main theorem in the flat, hyperbolic, and spherical cases. In each of the three cases we study solutions to a family of singular elliptic PDEs on the new space $\calC_{k}$ constructed above. For the flat case in~\S\ref{s:flat},  the proof is done by a direct computation and we show that the solutions given by Green's functions are polyhomogeneous. The proof for the nonzero curvature cases are more involved. In~\S\ref{ss:hyp2} we prove the result when two cone points merge, which involves  first constructing approximate solutions to arbitrarily high order, followed by using maximum principle to get the exact solution, and finally using commutator argument to show conormality and polyhomogeneity. In~\S\ref{ss:hypk} the case with more cone points merge is proved by a similar argument but with a more involved process in constructing the approximate solutions. In~\S\ref{s:sph} the spherical case is proved in a similar way, except that maximum principle no longer holds and is replaced by invertibility of the linearized operator.

These results are first steps in our program to understand the entire moduli space of constant
curvature conical metrics on surfaces.  The explanation of the extended configuration family, which
is the setting for this regularity theory, is already of interest, and its definition is vindicated by
our main regularity theorem. %Because of the intricacy of the geometric constructions here, we have
%included in this first paper only the regularity theory for families of already known solutions. 
In a second paper we will employ this machinery to understand features of the moduli space of spherical cone 
metrics where the cone angles are greater than $2\pi$. Our eventual goal is to understand the stratified nature 
of these moduli spaces in sufficient detail that we can produce a count of solutions.  We also hope to 
reach a better correspondence between the classical results and tools used to study these problems and the ones developed here. 

\bigskip

\noindent{\bf Acknowledgements:}  The authors are happy to acknowledge useful conversations with
Misha Kapovich, Richard Melrose and Michael Singer. The first author was supported by the NSF grant DMS-1608223. 

\section{Resolution of point configurations}\label{s:resolution} 
The first part of this paper focuses on a rather intricate geometric construction, which is a resolution via real blow-up of 
the configuration space of $k$ points on a compact Riemann surface $M$, as well as the resolution of the universal family 
over this space.  

To be more specific, let $\calD_k(M)$ denote the space of nonnegative divisors on $M$ of total degree $k$. Thus a point
of $\calD_k(M)$ consists of an ordered $k$-tuple of not necessarily distinct points $p_1, \ldots, p_k \in M$.  Although it 
is more common to study this using algebro-geometric ideas, we take a decidedly real and $\calC^\infty$ approach. 
Away from coincidences where two or more of the $p_j$ are the same, $\calD_k(M)$ is a copy of $M^k$ with all the partial 
diagonals removed. %divided by the symmetric group on $k$ letters $\Sigma_k$.  
Of course, $\Sigma_k$ acts freely on
this open set. Our first goal is to define a {\it real} compactification $\calE_k(M)$ of this open dense set in $\calD_k(M)$, 
which we call the extended configuration space. This compactification is a manifold with corners, which comes equipped
with a blowdown map $\beta: \calE_k \to M^k$. We next consider the product $\calE_k(M) \times M$; this is a trivial bundle 
which has a tautological multi-valued section $\sigma$: if $q \in \calE_k(M)$ and $\frakp = \beta(q)$, then $\sigma(\frakp)$ is 
the divisor $\frakp$ considered as a subset of $M$.  We shall define a resolution of this object, again as a manifold
with corners, using a suitable blowup of the graph of $\sigma$; this is called the extended configuration family 
and denoted $\calC_k(M)$.  This is not quite a fibration over $\calE_k(M)$ since certain fibers are `broken'; instead it is a
slightly more general type of map called a $b$-fibration, a natural extension of the notion of fibrations to the category of 
manifolds with corners. See the appendix for a general discussion about b-fibrations and manifolds with corners. 

\subsection{The extended configuration space $\calE_{k}$}\label{ss:Ek}
Our first goal is to define a good compactification for $\calD_k^s$ the space of all `simple' divisors, defined in~\eqref{e:dks} below. 
%It will be less
%cumbersome to work with the space $\wh{\calD_k}(M)$ of {\it ordered} $k$-tuples, and then pass to the quotient 
%by $\Sigma_k$ at the end. 
We begin with some notation. 
Suppose first that $\calI \subset \{1, \ldots, k\}$ is an index set with $|\calI|\geq 2$. The $\calI^{\mathrm{th}}$ partial
diagonal is the subset 
\[
\Delta_{\calI} = \{ \frakp \in \calD_k:  p_{i} = p_{j}, \forall i,j \in \calI\}.
\]
There is a reverse partial order of diagonals corresponding to the inclusion of index sets, 
$$
\calI \subset \calJ \Leftrightarrow \Delta_{\calI} \supset \Delta_{\calJ}.
$$

The union of the two index sets is defined in the usual sense. If $\calI$ and $\calJ$ have at least one common element, 
one can identify the partial diagonal corresponding to their union as the intersection of their diagonals:
$$
\Delta_{\calI\cup \calJ}=\Delta_{\calI} \cap \Delta_{\calJ}, \text{ if } \calI\cap \calJ \neq \emptyset.
$$
The assumption of nonempty intersection guarantees that the intersection of two diagonals is still a diagonal. 
Otherwise, if $\calI \cap \calJ = \emptyset$, there is a strict inclusion
$$
\Delta_{\calI\cup \calJ}\subsetneq \Delta_{\calI} \cap \Delta_{\calJ}. 
$$

On the other hand, when $|\calI\cap \calJ|\geq 2$, then $\Delta_{\calI \cap \calJ}$ 
is the smallest diagonal containing both $\Delta_{\calI}$ and $\Delta_{\calJ}$.

We also let 
\[
\Delta_{\calI}^0 = \Delta_{\calI} \setminus \left(\operatorname{\cup}_{\calJ \supsetneq \calI} \Delta_{\calJ} \right).
\]
It is then clear that the ensemble $\{\Delta_\calI^0\}$ is a stratification of $M^k = {\calD_k}$; the dense open stratum equals
\begin{equation}\label{e:dks}
{\calD_k^s} = M^k \setminus \left( \operatorname{\cup}_{\calI} \Delta_\calI^0 \right).
\end{equation}

To resolve the point collisions, we resolve all the partial diagonals; this is done by blowing up the diagonals iteratively 
in order of decreasing index set. In other words, if $\calI\subsetneq \calJ$, then $\Delta_{\calJ}$ is blown up before $\Delta_{\calI}$. 
There is still some freedom in the order of blow up, since the inclusion of index sets only provides a partial order. Below we show 
that the final space is well-defined and does not depend on which specific order to blow up.

%When carrying out an iterated blowup, there are two main issues.   
Let $X$ be a manifold with corners, containing two $p$-submanifolds, $Y_1$ and $Y_2$. (A $p$-submanifold $Y \subset X$ is 
defined to be a submanifold for which some neighborhood $\calU \supset Y$ is diffeomorphic as a manifold with
corners to the normal bundle $NY$.) The iterated blowup $[X; Y_1; Y_2]$ is the manifold with corners obtained as 
follows. First blow up $Y_1$ in $X$ to obtain a space $[X; Y_1]$. Now lift $Y_2 \setminus (Y_1 \cap Y_2)$ to this space
and take its closure. Finally, take the blowup of this lift in $[X; Y_1]$. In general the resulting space depends
on the order in which these blowups are taken; the reverse order may result in a nondiffeomorphic space. There are 
two special situations where the order does not matter: the first is if $Y_1 \subset Y_2$, and the second is if $Y_1$ and $Y_2$
meet transversely so that their normal bundles are disjoint (away from the zero section). 
Similarly, when there are more $p$-submanifolds $\{Y_{i}\}_{i=1}^{k}$, $[X; Y_{1};\dots; Y_{k}]$ is well-defined if the following is true: for any $Y_{i}$ and $Y_{j}$, either $Y_{i}\subset Y_{j}$, or $Y_{i}$ and $Y_{j}$ are transversal.

In the prescription for blowing up the partially ordered sequence of partial diagonals, we are blowing up 
these partial diagonals by inclusion, i.e., we always blow up the `smaller' submanifolds first. However, we must check
that the second criterion about transversality is satisfied.  
\begin{lemma}
Let $\calI$ and $\calJ$ be any two index sets. Suppose that $X_{\calI, \calJ}$ is the manifold with corners obtained by blowing up
all the partial diagonals $\Delta_{\calK}$ in $M^k$ for which $\calK \supset \calI$ and $\calK \supset \calJ$. 
Then the lifts of $\Delta_{\calI}$ and $\Delta_{\calJ}$ are transverse in $X_{\calI, \calJ}$. 
\end{lemma}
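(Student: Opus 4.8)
The plan is to reduce the transversality claim to a purely linear-algebraic statement about the fibers of the relevant normal bundles after the preliminary blowups, and then to verify that statement by an explicit local computation. First I would recall that transversality of two $p$-submanifolds in a manifold with corners is equivalent, after the intervening blowups have been performed, to the condition that at every point of the intersection of their lifts, the sum of their tangent spaces is everything, or equivalently that their (lifted) conormal bundles intersect only in the zero section. So the goal is: at any point $q$ lying over $\Delta_\calI \cap \Delta_\calJ$ in $X_{\calI,\calJ}$, the lifts $\wt\Delta_\calI$ and $\wt\Delta_\calJ$ meet transversally there.

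**The main case distinction.**

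The crucial observation is that the intersection behavior of $\Delta_\calI$ and $\Delta_\calJ$ is governed entirely by whether $\calI \cap \calJ$ is empty, a singleton, or has at least two elements — and these are exactly the three cases distinguished in the combinatorial discussion just above the lemma. If $|\calI \cap \calJ| \ge 2$, then by the remark in the text $\Delta_{\calI\cup\calJ} = \Delta_\calI \cap \Delta_\calJ$ is itself a partial diagonal with $\calI \cup \calJ \supset \calI$ and $\supset \calJ$, hence it has already been blown up in forming $X_{\calI,\calJ}$; after that blowup the two lifts become disjoint (their common locus was removed), so transversality is vacuous. If $\calI \cap \calJ = \{m\}$ is a singleton, then although $\Delta_\calI \cap \Delta_\calJ$ strictly contains $\Delta_{\calI\cup\calJ}$, one checks that in $M^k$ the original submanifolds $\Delta_\calI$ and $\Delta_\calJ$ already meet transversally: writing coordinates $p_i = p_m + x_i$ near a point of $\Delta_{\calI\cup\calJ}$, $\Delta_\calI$ is cut out by $\{x_i = x_m : i \in \calI\}$ and $\Delta_\calJ$ by $\{x_j = x_m : j \in \calJ\}$, and since these two families of equations involve disjoint sets of the free variables $\{x_i\}_{i\neq m}$, the defining equations are jointly independent, so the conormals meet only at $0$ and transversality holds already before any blowup; it is preserved under the subsequent blowups of larger diagonals because those are performed along submanifolds contained in or transverse to both. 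Finally if $\calI \cap \calJ = \emptyset$, then $\Delta_\calI$ and $\Delta_\calJ$ involve genuinely disjoint collections of coordinates $\{p_i : i \in \calI\}$ and $\{p_j : j \in \calJ\}$, so they are transverse in $M^k$ from the start, and again this survives the intervening blowups.

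**The technical core and the anticipated obstacle.**

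The step I expect to be the main obstacle is the last clause in each case: verifying that transversality, once established in $M^k$ (or vacuity, once the common diagonal is blown up), is genuinely preserved under the sequence of blowups of all the $\Delta_\calK$ with $\calK \supset \calI$ and $\calK \supset \calJ$. The subtlety is that blowing up a submanifold $Z$ can destroy transversality of two other submanifolds if $Z$ is positioned badly relative to them; what saves us is that every such $\calK$ satisfies $\Delta_\calK \subset \Delta_\calI$ and $\Delta_\calK \subset \Delta_\calJ$, so $Z = \Delta_\calK$ is a submanifold of the intersection locus, and blowing up along a submanifold contained in both factors of a transverse pair keeps the lifts transverse — this is precisely the first of the two "special situations" recalled in the text ($Y_1 \subset Y_2$) applied locally. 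I would make this precise by an induction on the blowup order: at each stage, the center being blown up is contained in the lift of $\Delta_\calI \cap \Delta_\calJ$, hence lies in (the lift of) each of $\wt\Delta_\calI$, $\wt\Delta_\calJ$, so the blowup is of the "nested" type with respect to each, and one invokes the standard fact that $[X;Z]$ preserves transversality of a pair $(\wt\Delta_\calI,\wt\Delta_\calJ)$ when $Z \subset \wt\Delta_\calI \cap \wt\Delta_\calJ$. The bookkeeping — keeping track of which diagonals have been blown up, and checking that no center ever cuts across the pair — is the part that requires care but no deep idea.
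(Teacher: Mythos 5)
Your proposal is correct and follows essentially the same route as the paper: a case split on $\calI\cap\calJ$, with coordinate transversality when the index sets are disjoint, and disjointness of the lifts after the (already performed) blowup of $\Delta_{\calI\cup\calJ}=\Delta_{\calI}\cap\Delta_{\calJ}$ when they overlap. The only differences are cosmetic: you split off the singleton-overlap case and note the diagonals are already transverse in $M^k$ there (the paper folds this into the general overlapping case and again uses disjointness of the lifts), and you spell out the preservation of transversality under the intervening blowups of centers $\Delta_{\calK}\subset\Delta_{\calI}\cap\Delta_{\calJ}$, which the paper leaves implicit.
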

\begin{proof}
When $\calI\cap \calJ=\emptyset$, we can choose the complex coordinates $s_1, \ldots, s_k\}$ such that 
$\Delta_{\calI}=\{s_1 = \ldots = s_p =0\}$, $\Delta_{\calJ}=\{s_{p+1} = \ldots = s_q =0\}$. Clearly then $\Delta_{\calI}$ and 
$\Delta_{\calJ}$ intersect transversely.

On the other hand, if $\calI \cap \calJ \neq \emptyset$, then $\calI' = \calI \setminus (\calI\cap\calJ)$ and 
$\calJ' \setminus (\calI \cap \calJ)$ are disjoint.  Choosing coordinates so that 
%$s_{\ell+1} = \ldots = s_k = 0$ on $\Delta_{\calI \cap \calJ}$, while 
$\{s_1 = \ldots = s_p = s_{\ell+1} = \ldots = s_k = 0\}$ on $\Delta_\calI$ and $\{s_{p+1} = \ldots = s_q = s_{\ell+1} = \ldots = s_k\}$ 
on $\Delta_{\calJ}$, then it is not hard to check that the lifts of these submanifolds to the blowup around
the set $\{s_1 = \ldots = s_q = s_{\ell+1} = \ldots = s_k = 0\}$ are disjoint, hence transverse by default. 
% up in either order. 
% $\Delta_\calI$ and $\Delta_{\calJ}$ are transverse, so 
% We can choose complex coordinates $s_1, \ldots, s_k$ so that, $\Delta_{\calI \cup \calJ} = \{ s_{\calI\cup \calJ} = 0\}$, while $\Delta_{\calI} = \{s_{\calI} = 0\}$ and $\Delta_\calJ = 
% \{s_{\calJ} =0\}$, and $\Delta_{\calI} \cap \Delta_{\calJ} = \{s_{\calI \cap \calJ} = 0\}$. 
%  are the zero sets of some disjoint collection of these $\ell$ coordinates. It is
% clear from this that after blowing up $\Delta_{\calI \cup \calJ}$, the lifts of these two submanifolds are disjoint, hence transverse
% by default.

% When $\calI\cap \calJ\neq \emptyset$, the intersection $\Delta_{\calI\cup \calJ} =\Delta_{\calI} \cap \Delta_{\calJ}$ 
% is already blown up in $X_{\calI \calJ}$, so the closures of the lifts of $\Delta_{\calI}$ and $\Delta_{\calJ}$ are 
% disjoint, and the order obviously does not matter.

\end{proof}

Using this Lemma, we may now proceed through this sequence of blowups to obtain the extended (ordered) configuration space
\begin{equation}
{\calE_{k}}=[{\calD_{k}}; \cup_{\calI } \Delta_{\calI} ].
\end{equation} 
%The extended configuration space $\calE_k$ itself is the quotient of this space by $\Sigma_k$.
One consequence of this operation is that the action of symmetric group $\Sigma_{k}$ on $\calD_{k}$ is resolved.
\begin{proposition}
The symmetric group $\Sigma_k$ acts freely on ${\calE_k}$. 
\end{proposition}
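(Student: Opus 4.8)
The plan is to combine the $\Sigma_k$-equivariance of the blowdown map $\beta$ with a short linear-algebra computation on the front faces produced by the diagonal blowups, run through an induction on cluster sizes.

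First I would record equivariance. The collection of all partial diagonals is $\Sigma_k$-invariant, since $\sigma\cdot\Delta_\calI=\Delta_{\sigma(\calI)}$, and by the Lemma above together with the accompanying discussion the iterated blowup $[\calD_k;\cup_\calI\Delta_\calI]$ does not depend on the order in which the diagonals are blown up. Hence the $\Sigma_k$-action on $\calD_k=M^k$ lifts to $\calE_k$ and $\beta\colon\calE_k\to M^k$ is equivariant. Consequently, if $\sigma\in\Sigma_k$ fixes $\tilde p\in\calE_k$ then $\sigma$ fixes $p:=\beta(\tilde p)$, i.e. $p_{\sigma(i)}=p_i$ for all $i$; in particular $\sigma$ preserves setwise each collision class of $p$ (the blocks of $i\sim j\iff p_i=p_j$). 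One then argues by contradiction, assuming $\sigma\neq\id$ and $\sigma\tilde p=\tilde p$.

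The heart of the argument is the following local fact. Let $\calJ$ be any \emph{cluster} of $\tilde p$ with $|\calJ|\ge 2$ — either a collision class of $p$, or, recursively, a sub-cluster arising when one zooms into a cluster at the corresponding diagonal blowup. At the stage of the construction where $\Delta_\calJ$ is blown up, its front face is a sphere bundle $S(N\Delta_\calJ)$, and over the relevant base point the fibre is $S(V)$ with $V=\{v\in\CC^\calJ:\sum_{i\in\calJ}v_i=0\}$; moreover, since $\sigma$ preserves $\calJ$, it acts on this fibre through the coordinate-permutation action of $\sigma|_\calJ$ on $\CC^\calJ$. Now $\tilde p$ projects to a point $w\in S(V)$, and equivariance gives $\sigma|_\calJ\cdot w=w$. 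Because $\sigma|_\calJ$ permutes coordinates, hence acts isometrically, $w$ must be constant on the orbits of $\sigma|_\calJ$; but $w\neq 0$ and $\sum_{i}w_i=0$, so $w$ is not constant on all of $\calJ$. Therefore the coincidence classes of $w$ — which are exactly the sub-clusters into which $\calJ$ first splits in $\tilde p$ — form a nontrivial partition of $\calJ$ into $\sigma|_\calJ$-invariant blocks, each of cardinality $<|\calJ|$. To finish, I would choose among the finitely many clusters of $\tilde p$ one, $\calJ$, of minimal cardinality with $\sigma|_\calJ\neq\id$ (this set is nonempty, since $\sigma\neq\id$ acts nontrivially on some collision class, which is a cluster). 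By the previous paragraph $\calJ$ splits into sub-clusters $\calJ_1,\dots,\calJ_s$, $s\ge 2$, each smaller than $\calJ$ and $\sigma|_\calJ$-invariant, so by minimality $\sigma|_{\calJ_l}=\id$ for all $l$; since $\{\calJ_l\}$ is a $\sigma|_\calJ$-invariant partition of $\calJ$ on each block of which $\sigma$ is trivial, $\sigma|_\calJ=\id$, a contradiction.

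The step I expect to require the most care — the one place where the argument touches the detailed geometry of $\calE_k$ rather than soft equivariance — is making the notion of "cluster of $\tilde p$" and its recursive sub-cluster decomposition precise, together with the identification of the coincidence classes of $w$ with the lifts of the smaller diagonals $\Delta_\calK$, $\calK\subsetneq\calJ$, that meet the front face of $\Delta_\calJ$; this entails checking that near $\tilde p$ the relevant diagonal blowups form a nested, essentially product-like configuration (using that a collision class meets no strictly larger diagonal, and that distinct collision classes are transverse). This can be done either by appealing to the description of the boundary hypersurfaces of $\calE_k$ and their fibrations, or more concretely by working in adapted local coordinates near $\tilde p$ in which $\beta$ and the $\Sigma_k$-action are explicit; the remaining ingredients — equivariance of $\beta$, the isometry argument forcing "$w$ constant on orbits", and the induction on $|\calJ|$ — are routine.
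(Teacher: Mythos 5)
Your argument is correct, but it takes a genuinely different route from the paper. The paper's proof is a soft appeal to the general theory of resolving group actions by iterated blowup (Albin--Melrose): the fixed point sets of $\Sigma_k$ on $\calD_k$ are exactly the partial diagonals, the isotropy groups are nested compatibly with the blowup order (isotropy at $\Delta_\calI$ contained in isotropy at $\Delta_\calJ$ when $\calI\subset\calJ$), and it is then asserted that the lifted action has trivial isotropy. You instead verify triviality of the lifted isotropy directly: equivariance of the blowdown maps forces a putative fixed point to project, at the stage where a $\sigma$-invariant cluster $\calJ$ is blown up, to a $\sigma|_\calJ$-fixed direction in the normal sphere $S(V)$, $V=\{v\in\CC^{\calJ}:\sum v_i=0\}$; a permutation-fixed ray is (by the isometry argument, which correctly upgrades a fixed ray to a fixed sum-zero vector) constant on orbits, hence nonconstant fixed vectors force a nontrivial $\sigma|_\calJ$-invariant splitting of $\calJ$ into strictly smaller blocks, and your minimality argument over clusters then yields a contradiction. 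What each approach buys: the paper's is shorter and places the construction in a general framework, but leaves the decisive verification as ``not hard to check''; yours is elementary and self-contained and in effect supplies exactly that verification. The one place needing real care is the one you flag yourself: making the recursive notion of cluster precise and checking that the identifications of the front-face fibers of the lifted sub-diagonals with $S(V_\calK)$ (with the coordinate-permutation action) are canonical and $\Sigma_k$-equivariant near the given point; this follows from the local product (cluster) structure of the iterated blowup, of the kind recorded in Lemma~\ref{l:tree}, and is no more than the paper itself implicitly relies upon.
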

\begin{proof}
The fixed points of $\Sigma_{k}$ on ${\calD_{k}}$ are precisely the partial diagonals, and moreover, the isotropy
group at $\Delta_{\calI}$ is a subgroup of the isotropy group at $\Delta_{\calJ}$ when $\Delta_{\calI} \supset \Delta_{\calJ}$, 
or equivalently when $\calI \subset \calJ$.  Thus our iterative blowup corresponds to the blowup which resolves
this group action, define by blowing up the fixed point sets ordered by reverse isotropy type inclusion, and it is not
hard to check in this case that the isotropy groups of the lifted group action are all trivial. This is a special case of a 
more general iterated blowup considered by Albin and Melrose \cite{AlbinMelrose} which resolves a general Lie group action.
\end{proof}

The space $\calE_k$ appears rather complicated at first glance, but the combinatorial structure of its faces mirrors
the partially ordered set of subsets $\{\calI\}$ of $\{1, \ldots, k\}$.  For each element $\calI$ of this set, there
is a boundary hypersurface $F_{\calI}$ of $\calE_k$ generated by blowing up $\Delta_{\calI}$. We also denote by
$\rho_\calI$ the boundary defining function for this face. Identifying the interior of $\calE_k$ with the nonsingular 
part of $M^k$ away from all the diagonals, we see that $\rho_\calI$ provides a measurement of the radius of a cluster of $|\calI|$
coalescing points.  

\subsection{The extended configuration family $\calC_{k}$}\label{ss:Ck}
We next consider the universal family over $\calE_k$. This is a space $\calC_k$ equipped with a $b$-fibration
\[
\widehat{\beta}: \calC_k \longrightarrow \calE_k,
\]
such that for each $\frakp \in \calE_k^\reg (\simeq \calD_{k}^{s})$, the fiber $\wh{\beta}^{-1}( \frakp)$ is the surface $M$ blown up at
the points of $\frakp$.  We point out that $\wh{\beta}$ is different than the map $\beta$ discussed earlier, which
is the blowdown $\calE_k \to M^k$.   In the following, it is often simpler to refer to points $\frakp$ on $\calE_k$;
these are, however, elements of the compactified configuration space, so the actual divisor, or $k$-tuple of
points on $M$, is really the image under $\beta$ of this point.   In any case, with this understanding, 
if $\frakp$ lies in one of the boundary faces of $\calE_k$, then the fiber $\wh{\beta}^{-1}(\frakp)$ is a union of 
surfaces with boundary which encode the various ways the corresponding cluster of points can come together.

To define this universal family, we begin with the trivial fibration $\calE_{k}\times M \to \calE_k$, and let $z$ be 
a generic point on the fiber, which we may as well assume is a local holomorphic coordinate there. We wish to 
resolve the graph of the canonical `section' $\sigma$ of this bundle: 
$$
\{(\frakp,z) \in \calE_{k}\times M: z \in \sigma(\frakp)\}. 
$$
Since $\sigma$ is multi-valued, we must first blow up the crossing loci, which are contained in the graphs of $\sigma$ over 
the faces of $\calE_k$.  More specifically, if $\frakp$ lies in a face $F_\calI$, we write $\sigma^\calI(\frakp)$ for the 
corresponding `coincidence point' $p_{i_1} = \ldots = p_{i_r}$, $\calI = (i_1, \ldots, i_r)$ (so $\sigma(\frakp)$ 
has $r$ copies of this point and $k-r$ other points), and then define the coincidence set 
\begin{equation}
F_{\calI}^\sigma=\{\rho_{\calI}=0, z= \sigma^\calI(\frakp) \}.
\label{liftedincset}
\end{equation}

The space $\calC_k$ may now be defined by iteratively blowing up this collection of submanifolds with respect to the 
partial order on index sets, culminating at the last step in the blowups of the nonsingular parts of the graph of $\sigma$, i.e.,
the submanifolds $F^\sigma_i = \{ z = p_i\}$, $i = 1, \ldots, k$, where $\frakp$ does not lie in any partial diagonal. Altogether,
\begin{equation}
\calC_k = \big[ \calE_k \times M; \{ F^\sigma_\calI\} \big].
\label{extconfspace}
\end{equation}

The following lemma shows, just as for $\calE_k$, that the blowup is well-defined. 
\begin{lemma}
The lifts of $F^\sigma_\calI$ and $F^\sigma_\calJ$ are transverse after $F_{\calI\cup \calJ}^\sigma$ has
been blown up. In particular, the lifts of $F^\sigma_{i}$ and $F^\sigma_{j}$ do not meet when $i \neq j$. 
\end{lemma}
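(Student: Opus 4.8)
The plan is to reduce this to the transversality criterion already used for the partial diagonals on $\calE_k$, namely that $[X; Y_1; \dots; Y_m]$ is well-defined provided any two of the submanifolds are either nested or transverse. So I would work locally near a point of $F^\sigma_{\calI\cup\calJ}$, introduce holomorphic fiber coordinate $z$ on $M$ together with local coordinates on $\calE_k$ adapted to the faces $F_{\calK}$, and compute the lifts of $F^\sigma_{\calI}$ and $F^\sigma_{\calJ}$ after the earlier blowups.

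First I would treat the two combinatorially distinct cases, mirroring the earlier lemma on $\calE_k$. If $\calI\cap\calJ=\emptyset$, then the coincidence points $\sigma^{\calI}(\frakp)$ and $\sigma^{\calJ}(\frakp)$ are generically distinct points of $M$, so already in $\calE_k\times M$ the sets $F^\sigma_{\calI}=\{\rho_\calI=0,\ z=\sigma^\calI(\frakp)\}$ and $F^\sigma_{\calJ}=\{\rho_\calJ=0,\ z=\sigma^\calJ(\frakp)\}$ are cut out by equations in complementary groups of variables (the $z$-conditions involve different branches, and $\rho_\calI,\rho_\calJ$ are independent boundary defining functions on $\calE_k$), hence meet transversely; this is preserved under the earlier blowups, and after blowing up the nonsingular pieces $F^\sigma_i,F^\sigma_j$ with $i\ne j$ one sees they are separated, giving the final sentence. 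If $\calI\cap\calJ\ne\emptyset$, set $\calK=\calI\cup\calJ$; on $F_\calK$ the points $\sigma^\calI$ and $\sigma^\calJ$ coincide, so $F^\sigma_{\calI}$ and $F^\sigma_{\calJ}$ both pass through $F^\sigma_{\calK}$, which by hypothesis has already been blown up. The key computation is then that after this blowup their proper transforms separate in the new front face, exactly as in the diagonal case: in coordinates where $\rho_\calK$ is one radial variable and the relative position $z-\sigma^\calK$ together with $\sigma^\calI-\sigma^\calK$, $\sigma^\calJ-\sigma^\calK$ are the angular/normal variables, the lift of $F^\sigma_\calI$ lives over the ray where $z-\sigma^\calK$ and $\sigma^\calI-\sigma^\calK$ agree to leading order while $F^\sigma_\calJ$ lives over the analogous ray for $\calJ$, and these rays are distinct precisely because $\calI\setminus\calK$-type displacement and $\calJ\setminus\calK$-type displacement are governed by independent coordinates on $\calE_k$ (this is where the earlier Lemma on $\Delta_\calI,\Delta_\calJ$ feeds in). Hence the lifts are either disjoint or, where they still meet, transverse.

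The step I expect to be the main obstacle is bookkeeping the coordinates on $\calE_k$ near a deep corner, i.e., tracking how $\sigma^\calI(\frakp)$, $\sigma^\calJ(\frakp)$, and $\sigma^\calK(\frakp)$ depend on the iterated polar coordinates introduced by the chain of blowups $\Delta_\calL$ with $\calL\supset\calK$, and verifying that the "angular" displacements determining the two proper transforms are genuinely independent functions there rather than merely generically so. Concretely this means choosing, at the relevant corner of $\calE_k$, product coordinates $(\rho_{\calL_1},\dots,\rho_{\calL_n},y)$ in which each $\sigma^\calI-\sigma^\calK$ factors as a monomial in the $\rho$'s times a smooth nonvanishing function, and then checking the Jacobian condition; once the local model is pinned down the transversality is a short linear-algebra verification. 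I would also record the special sub-case $|\calI\cap\calJ|\ge 2$, where $\calI\cap\calJ$ itself indexes a diagonal, since then the relevant face structure is slightly richer but the same argument applies with $\calK=\calI\cup\calJ$ and the containment $\Delta_\calK\subset\Delta_{\calI\cap\calJ}$ used to order the blowups correctly.

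Finally, the last assertion — that $F^\sigma_i$ and $F^\sigma_j$ do not meet for $i\ne j$ — follows by taking $\calI=\{i\}$, $\calJ=\{j\}$ (or rather the limiting case of the above with singleton index sets): after all diagonal faces have been blown up in $\calE_k$, the two sections $z=p_i$ and $z=p_j$ over the interior extend to disjoint submanifolds of $\calC_k$ because any way the points $p_i,p_j$ come together has been separated into its own boundary face, on which the two blown-up fibers are distinct components. This both completes the induction justifying the definition \eqref{extconfspace} and confirms that $\calC_k$ is a well-defined manifold with corners.
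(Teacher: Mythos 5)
Your proposal is correct and follows essentially the same route as the paper: the paper's proof is exactly your dichotomy, namely that $F_{\calI}^\sigma \cap F_{\calJ}^\sigma = F_{\calI\cup\calJ}^\sigma$ when $\calI\cap\calJ\neq\emptyset$ while the two coincidence sets are transverse away from $F_{\calI\cup\calJ}^\sigma$ when $\calI\cap\calJ=\emptyset$, so that blowing up $F_{\calI\cup\calJ}^\sigma$ first separates (hence trivially transversalizes) the lifts, with the singleton statement immediate. The paper simply records this in one line ("as before"), whereas you flesh out the local coordinate bookkeeping; that extra detail is harmless but not needed beyond the earlier lemma for the diagonals $\Delta_\calI$, $\Delta_\calJ$.
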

\begin{proof}
As before, this follows from the fact that
$$
F_{\calI}^\sigma \cap F_{\calJ}^\sigma = F_{\calI\cup \calJ}^\sigma
$$  
when $\calI\cap \calJ\neq \emptyset$, while $F_{\calI}^\sigma$ and $F_{\calJ}^\sigma$ are transverse 
away from $F_{\calI\cup \calJ}^\sigma$ when $\calI\cap\calJ=\emptyset$.  The last assertion is obvious. 
\end{proof}

\subsection{The simplest case, $k=2$} \label{ss:k2}
The description of the boundary faces of $\calE_k$ and $\calC_k$ is somewhat complicated and at first glance confusing, 
so to warm up, we present the cases $k = 2$ and $3$ in some detail since it is possible to see what is going on without
too much work then. 

The space of ordered divisors ${\calD_2}$ is simply $M^2$, and there is a single diagonal $\Delta_{12} = \{p_{1}=p_{2}\}$, hence
%and  from~\eqref{translation} there is a translation direction that is transversal to the blow up,
\[
{\calE_2} = [ {\calD_2} ; \Delta_{12}].
\]
Here and below we keep the subscript $12$ to foreshadow the general case. From local coordinates $(z_1, z_2)$ near 
$(p_0, p_0) \in \Delta$, we determine the center of mass $\zeta = \frac12(z_1 + z_2)$ and displacement $w = \frac12 (z_1 - z_2)$, 
so that 
\begin{equation}\label{e:zetaw}
z_1 = \zeta + w,\ z_2 = \zeta - w. 
\end{equation}
The blowup amounts to setting $w = \rho_{12} e^{i\theta}$, $\theta \in [0,2\pi]$ and adding the face $\rho_{12} = 0$. 

The front face $F_{12}$  is then a possibly nontrivial circle bundle over the diagonal. Indeed, it is the unit normal bundle of the 
diagonal in $M^2$, and hence has Euler characteristic equal to $\chi(M)$.  In any case, we have coordinates $(\theta, \zeta)$ on $F_{12}$
and a full set of coordinates $(\rho_{12}, \theta, \zeta)$ near this face in the blowup. 

The symmetric group $\Sigma_2$ interchanges the two coordinates $(z_1, z_2)$, and hence sends $\zeta \mapsto \zeta$,
$w \mapsto -w$.  In local coordinates, $(\rho_{12}, \theta, \zeta) \mapsto (\rho_{12}, \pi+\theta, \zeta)$, and it is easy
to see that this is a free action. %The quotient ${\calE_2}/\Sigma_2$ is again a manifold with boundary
%which fibers over $\Delta_{12}$, with fibers $\bS^1/\ZZ_2$. 

The extended configuration family is now obtained from the product $\calE_2 \times M$ by blowing up
in succession the two submanifolds
\begin{multline*}
F_{12}^\sigma = \{ (\rho_{12}=0, \theta, \zeta, z): z = \zeta\} \subset F_{12} \times M, \ \  \mbox{and} \\ 
F_1^\sigma \cup F^\sigma_2 =  \{ (\frakp, \sigma(\frakp)): \frakp \in \calD_2\}. 
\end{multline*}

For the first of these blowups, introduce spherical coordinates $(R_{12}, \Omega)$ around the codimension three submanifold
$\{\rho_{12} = 0$, $\zeta = z\}$, so $R_{12} \geq 0$ and $\Omega \in \bS^2_+$.  We write 
\begin{equation}\label{S2+}
\Omega= (\rho_{12}, z-\zeta)/R_{12} =  (\sin \omega,  \cos \omega \, e^{i\phi} )
\end{equation} 
where $\omega \in [0,\pi/2]$ and $\phi \in [0,2\pi]$, and so 
\begin{equation}\label{e:Romega}
\rho_{12} =  R_{12} \sin \omega, \ \ z = \zeta + R_{12} \cos \omega e^{i\phi}.
\end{equation}
We also set $z-\zeta = re^{i\phi}$. 

The face created by this blowup, which we call $\frakC_{12}$, is the total space of a fibration $\pi_{12}: \frakC_{12} \to F_{12}$,
with each fiber a copy of $\bS^2_+$. The preimage of a point $(0,\theta, \zeta) \in F_{12}$ is the union of two manifolds with 
boundary: the first is the blowup of $M$ around the point $\zeta$, $[M, \{\zeta\}]$ and the second is $\bS^2_+$. These meet 
along their common boundary, which is a circle.  From~\eqref{e:Romega},
\begin{equation}\label{bdf}
\pi_{12}^{-1} ( \rho_{12} )  = A R_{12} \omega,
\end{equation}
where $A$ is a strictly positive smooth function. The significance of this computation is that the lift of the defining function
for $F_{12}$ equals the product of defining functions for the fiber $M$ blown up at $\zeta$ and the half-sphere, up to a nonvanishing
smooth factor. That is, the boundary defining functions satisfy the b-fibration condition.

\begin{figure}[h]
\includegraphics[width=\textwidth]{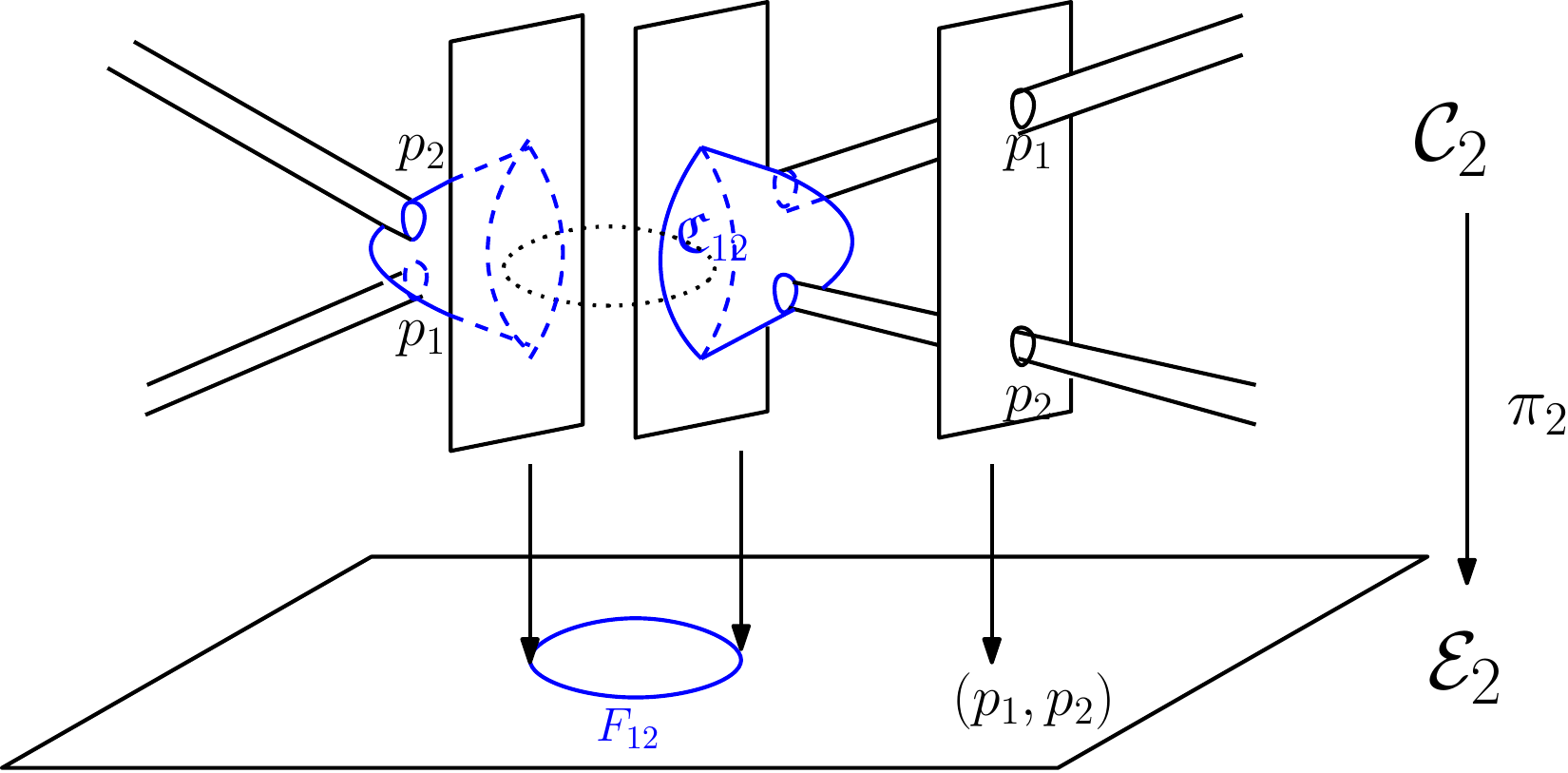}
 \caption{The singular fibration of $\calC_{2}\rightarrow \calE_{2}$. Here we removed the center of mass $\zeta$, and the coordinate in the base is $w$, see~\eqref{e:zetaw}. The boundary face in the base, $F_{12}$, is parametrized by $\theta$ such that $w=\rho_{12}e^{i\theta}$. When $\theta$ goes from $0$ to $\pi$, the two points $p_{1}$ and $p_{2}$ on the fiber interchange. }
 \label{f:C2}
\end{figure}

We now turn to the second blowup. Consider the graph of $\sigma$, 
\begin{equation}\label{intersection}
\begin{array}{l}
F_{1}^{\sigma}=\{z=z_{1}\}=\{z= \zeta + \rho_{12} e^{i\theta}\}=\{\omega=\pi/4, \phi=\theta\} \\
F_{2}^{\sigma}=\{z=z_{2}\}=\{z= \zeta - \rho_{12} e^{i\theta}\}=\{ \omega=\pi/4, \phi=\theta+\pi\}. 
\end{array}
\end{equation}
These two components intersect $\frakC_{12}$ at two disjoint copies of $\mathbb{S}^{1}$. Their intersection with each
$\pi_{12}^{-1}( 0, \theta, \zeta)$ consists of two points on each $\bS^2_+$ fiber. In other words, the boundaries of these 
components are each circles, and there is one point of each of these circles in each $\bS^2_+$ fiber. These intersection
points are given explicitly in~\eqref{intersection}.  Observe that as $\theta$ goes from 0 to $\pi$, these two points 
interchange.  The final configuration space is equal to
\[
{\calC_2} := \big[ {\calE_2} \times M;  F_{12}^\sigma;  F_1^\sigma \cup F_2^\sigma \big].
\]

Note finally that since each $F^{\sigma}_{i}$ projects surjectively to $\calE_{2}$, the relation describing the pullback
of boundary defining functions stays the same as for the space before this last blowup.  

This space is equipped with a $b$-fibration 
\[
\pi: \calC_2 \longrightarrow \calE_2.
\]
Over a regular point $\frakp \not\in \Delta_{12}$, the preimage $\pi^{-1}(\frakp)$ is a copy of $M$ blown up at the two
points of $\frakp$. On the other hand, the preimage $\pi^{-1}( 0, \theta, \zeta)$ of a point on $F_{12}$ is the union
of $M$ blown up at the single point $\zeta$ and the half-sphere $\bS^2_+$ blown up at two points,
$[ \bS^2_+; \{\omega = \pi/4, \phi = \theta\} \cup \{\omega = \pi/4, \phi = \theta + \pi\} ]$. See Figure~\ref{f:C2} for an illustration of the fibration. 

\subsection{The case $k=3$}\label{ss:k3}
The next case illustrates the iterative nature of the general construction. 

As before, we work in local coordinates $(z_1, z_2, z_3) \in M^3$ near a point $(p_0, p_0, p_0) \in \Delta_{123}$.

The initial blowup of the central diagonal $\Delta_{123}$ in ${\calD_{3}}$ results in the space $[{\calD_3}; \Delta_{123}]$.
Just as for ${\calE_2}$, this is a manifold with boundary; its boundary, or front face at this first step, is called 
$F_{123}$ and is a sphere bundle over $\Delta_{123}$, but now with three-dimensional spherical fibers. 
In choosing coordinates, it seems to  be more convenient to break the symmetry by using classical Jacobi coordinates 
(introduced originally to study the $N$-body problem in celestial mechanics). Thus we define a center of mass $\zeta$ 
in the first two variables, as well as two displacement variables
\[
\zeta = \frac12 (z_1 + z_2), \ w_1 = \frac12 (z_1 - z_2),\ w_2 = z_3 - \frac12 (z_1 + z_2). 
\]
In these coordinates, 
\[
\Delta_{123} = \{ \zeta+ w_1 = \zeta - w_1 = \zeta + w_2\} = \{w_1 = w_2 = 0\}.
\]
The resolution is the blowup of the origin in $\CC_{w_1 w_2}$, which is captured by spherical coordinates in the fibers 
of the normal bundle: % of $\Delta_{123}$, 
$$
N_\zeta \Delta_{123} \cong \RR^4_{w_1, w_2} \ni \rho_{123} \Theta, \ \ (\rho_{123}, \Theta) \in \RR^{+}\times \bS^{3}.
$$
To proceed, write
\[
\Theta = (e^{i\phi_1}  \cos \theta , e^{i\phi_2} \sin \theta), \ \ (\theta, \phi_{1}, \phi_{2}) \in [0,\pi/2] \times [0,2\pi]^2,
%( \cos \phi_1 \cos\theta, \sin \phi_1 \cos \theta, \cos \phi_2 \sin \theta, \sin \phi_2 \sin \theta), 
\]
corresponding to the fact that $\bS^3$ is a join of two circles, so that
\begin{equation}\label{S3}
w_1 = \rho_{123} \cos \theta \, e^{i\phi_1}, \ \ w_2 = \rho_{123} \sin \theta\, e^{i\phi_2}.
\end{equation} 
These coordinates are singular at $\theta = 0, \pi/2$.  To remedy this near $\theta = \pi/2$, for example, 
we use instead the projective coordinate $\wt{w}_1 = w_1/\rho_{123}$ along with $\phi_2$. 

In these new coordinates, the lifts of the partial diagonals in a neighborhood of $\Delta_{123}$ have the form
\begin{equation}\label{Delta12}
\begin{array}{l}
\Delta_{12}=\{z_{1}=z_{2}\}=\{w_1=0\}= \{\theta=\pi/2\} = \{\wt{w}_1 = 0\} \\
\Delta_{13}=\{z_{1}=z_{3}\}=\{w_{1}=w_{2}\}=\{\theta=\pi/4, \phi_{1}=\phi_{2} \}\\
\Delta_{23}=\{z_{2}=z_{3}\}=\{w_{1}=-w_{2}\}=\{\theta=\pi/4, \phi_{1}= - \phi_{2}\}.
\end{array}
\end{equation}
The additional expression for the lift of $\Delta_{12}$ is included because $\theta = \pi/2$ is a singular
locus for the $(\theta, \phi_1, \phi_2)$ coordinate system. Each of these lifts is locally a product $\bS^{1}\times \RR^{+}$ 
near $\bS^3 \times \{0\}$,  and their intersections with each fiber of $F_{123}$ are three disjoint circles. We then blow these 
up in any order to obtain
$$
{\calE_{3}}=[{\calD_{3}};\Delta_{123}; \Delta_{12}  \cup \Delta_{13} \cup \Delta_{23}]
$$ 

There are three new front faces, $F_{ij}$, each with a boundary defining function $\rho_{ij}$, and each (locally) 
diffeomorphic to $\bS^{1} \times \bS^{1} \times \RR^{+}$.  The intersection $F_{ij} \cap F_{123}$ is a torus 
$\mathbb{S}^{1}\times \mathbb{S}^{1}$. 

Let us illustrate this geometry near $F_{12}\cap F_{123} $ in coordinates.  From~\eqref{S3} and~\eqref{Delta12}, 
$\phi_{2}\in \bS^{1}$ and $\rho_{123} \in \RR^{+}$ parametrize $\Delta_{12}$, while we set
$\wt{w}_1 := w_1/\rho_{123}$ as a coordinate for the normal bundle.  Blowing up at $\wt{w}_1 = 0$ amounts 
to writing $\wt{w}_1 = \rho_{12} e^{i\theta_{12}}$. This is of course really the same as setting $\rho_{12} = \cos\theta$, 
$\theta_{12} = \phi_1$, but the new monikers have been introduced to conform with the general notation when $k > 3$.
Altogether, we have a full set of local coordinates
\begin{equation}\label{crdnate12}
(\zeta, \rho_{12}, \theta_{12}, \phi_{2}, \rho_{123})\in \RR^2 \times \RR^{+}\times \bS^{1}\times \bS^{1}\times \RR^{+}, 
\end{equation}
These are related to the original coordinates by
\begin{equation}
z_1 = \zeta + \rho_{123}\rho_{12} \, e^{i\theta_{12}},\ z_2 = \zeta - \rho_{123}\rho_{12} \, e^{i\theta_{12}}, \ z_3 = \zeta + \rho_{123} \sqrt{1-\rho_{12}^{2}}\, e^{i\phi_{2}}.
\end{equation} 

Now consider the configuration space ${\calE_{3}}\times M$. Near $(F_{12}\cap F_{123}) \times M$ we extend the 
local coordinates \eqref{crdnate12} to 
\begin{equation}\label{local12}
(\zeta, \rho_{12}, \theta_{12}, \phi_{2}, \rho_{123}, z) \in \RR^2 \times \RR^{+}\times \bS^{1}\times \bS^{1}\times \RR^{+} \times \RR^2,
\end{equation}
where $z$ is a local coordinate on $M$. The coordinate $\zeta$ is sometimes omitted below. The first step is to resolve the 
`central' coincidence set, cf.\ \eqref{liftedincset},
$$
F_{123}^{\sigma}=\{z-\zeta= \rho_{123}=0\},
$$
which we do by introducing spherical coordinates $(z-\zeta, \rho_{123})=R_{123}\Omega_{123}$, where $R_{123} \geq 0$ and 
%(\cos \omega e^{i\phi}, \sin \omega )$.
\begin{equation}\label{S2}
\bS^2_+ \ni \Omega_{123} = (\cos \omega \, e^{i\phi}, \sin \omega ), \ \omega \in [0,\pi/2], \ \phi \in \mathbb{S}^{1} .
\end{equation}
This introduces the front face $\frakC_{123} = \{R_{123} = 0\}$. % and the spherical coordinates $(R_{123}, \Omega_{123})$ around it. 
Coordinates at this stage are
$$
(\zeta, R_{123}, \Omega_{123}, \rho_{12}, \theta_{12}, {\phi_{2})\in \RR^{2} \times \RR^{+}\times \bS^{2}_{+}\times \RR^{+}\times \bS^{1}\times \bS^{1}}.
$$ 
Note also that
\begin{equation}\label{generic}
\begin{split}
z_1 - \zeta &=-(z_2 - \zeta) =R_{123}\sin \omega  \,\rho_{12}\, e^{i\theta_{12}},  \\
& z_3 -\zeta =R_{123}\, \sin \omega\, \sqrt{1-\rho_{12}^{2}}\, e^{i\phi_{2}},
z-\zeta=R_{123}\, \cos \omega\,  e^{i\phi}.
\end{split}
\end{equation}

As before, the coordinates $(\omega, \phi)$ become degenerate at $\omega = \pi/2$, so in a neighborhood of
this locus we replace these by the projective coordinate $\wh{z} = (z-\zeta)/R_{123}$. 

Now fix a point $q = (\rho_{12}, \theta_{12}, \phi_2)$ on the front face $F_{123}$ in the  base ${\calE}_3$. The fiber
above $q$ after the blowup above is a union of $\RR^2$ blown up at the origin (or more globally, the surface $M$ blown
up at the point $\zeta$) and a half-sphere, parametrized by the coordinates $(\omega, \phi)$ (or $\wh{z}$) above. These meet along 
the circle $\{\omega = 0\}$.  If $\pi_3$ is the blowdown map, then, analogous to \eqref{bdf}, 
\begin{equation}
\pi_{3}^{*}\rho_{123}=R_{123}\sin \omega=A R_{123} \, \omega
\end{equation}
for some smooth nonvanishing function $A$. 

The partial coincidence sets $F_{ij}^\sigma$ intersect the fiber over $q$ only in the interior of the hemisphere. 
In local coordinates, 
\begin{equation}
\begin{array}{l}
F_{12}^{\sigma}=\{z =z_{1}=z_{2}\}= \{\rho_{12}=0,  \omega=\pi/2 \} = \{ \wt{w}_{1} = 0, \wh{z} = 0\}\\
F_{13}^{\sigma}=\{z =z_{1}=z_{3}\}=\{\rho_{12}= \sqrt{2}/2, \tan \omega=\sqrt{2}, \theta_{12}=\phi_{2}=\phi \}\\
F_{23}^{\sigma}=\{z=z_{2}=z_{3}\}=\{\rho_{12}=\sqrt{2}/2, \tan \omega =\sqrt{2},\theta_{12}+\pi=\phi_{2}=\phi\}
\end{array}
\end{equation}
The final expression for the lift of $F_{12}^{\sigma}$ is included because of the degeneracy of the other coordinate
system at $\rho_{12} = 0$, $\omega = \pi/2$. 

Now blow up $F_{12}^\sigma$.  This introduces a new front face $\frakC_{12}$, the fibers of which (for each $(\zeta, q)$) 
are hemispheres ``on top'' of the previous hemispheres.   Carrying out the analogous blowup for the two other
partial diagonals as well leads to the collection of front faces $\frakC_{ij}$, each diffeomorphic to 
$\RR^{+}\times \mathbb{S}^{1} \times \bS^{1}\times \mathbb{S}^{2}_{+}$. Each $\frakC_{ij}$ fibers over the front face $F_{ij}$
in the base ${\calE_3}$ by projecting off the final $\mathbb{S}^{2}_{+}$. Again, this is very similar to the two-point case. 

\begin{figure}[h]
\centering
\includegraphics[width = 0.5\textwidth]{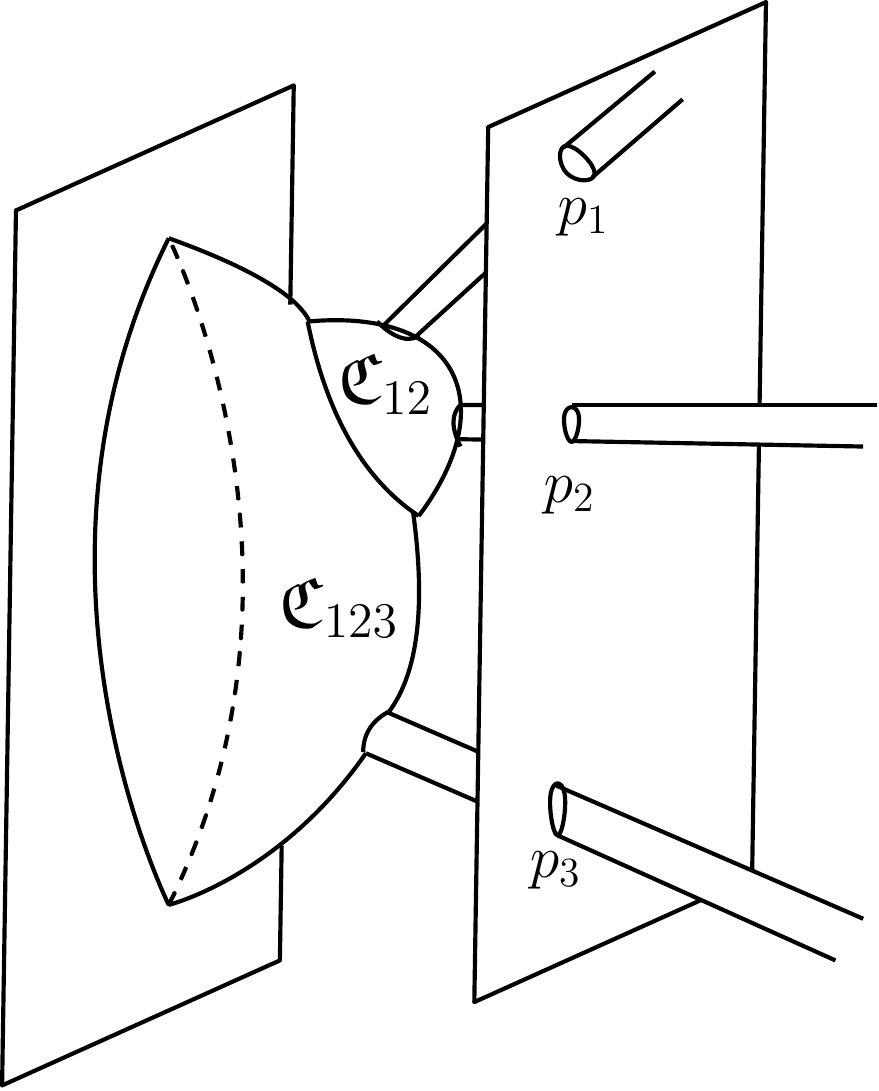}
\caption{One of the singular fibers in $\calC_{3}$, where two of the points collide faster than the third one}
\label{figurek=3}\end{figure}

Let $\Omega_{12} \in \bS^{2}_{+}$ be the variable in the fiber of $\frakC_{12}$ and $R_{12}$ the corresponding boundary defining function.
Thus 
$$
(\cos \omega\,  e^{i\phi}, \rho_{12})=R_{12} \Omega_{12}
$$
and 
\begin{equation}
\Omega_{12}=(\cos \omega_{12}\, e^{i\phi_{12}}, \sin \omega_{12} ), \ \omega_{12} \in [0,\pi/2], \ \phi_{12} \in \mathbb{S}^{1}. 
\end{equation}
Coordinates near $\frakC_{12}$ are then
$$
(\zeta, R_{123}, R_{12}, \Omega_{12}, \theta_{12}, {\phi_{2})  \in \RR^{2}\times \RR^{+}\times \RR^{+} \times \bS^{2}_{+}\times \bS^{1}\times \bS^{1}},
$$
and these translate to the original coordinates by
\begin{multline}\label{blowdown}
z_1-\zeta=-(z_2- \zeta) =R_{123} \sqrt{1-(R_{12}\cos \omega_{12})^{2}}  \, R_{12}\sin \omega_{12}\, e^{i\theta_{12}},\\
z_3 -\zeta=R_{123}\sqrt{1-(R_{12}\cos \omega_{12})^{2}}\sqrt{1-(R_{12}\sin \omega_{12})^{2}}\, e^{i\phi_{2}}, \\
z - \zeta =R_{123}R_{12}\cos \omega_{12} \, e^{i\phi_{12}}.
\end{multline}

The boundary defining function relation near the corner $\frakC_{123} \cap \frakC_{12}$ is given by
\begin{multline}\label{e:3pt}
%\pi_{3}^{*}\rho_{123}=R_{123} \sqrt{1-(R_{12}\, \cos \omega_{12})^{2}}  \, R_{12}\, \sin \omega_{12} 
%\sim R_{123}\, R_{12} \, \omega_{12}, \\
\pi_{3}^{*}\rho_{123}=R_{123}\sqrt{1-(R_{12}\, \cos \omega_{12})^{2}}  \sim R_{123}, \\
\pi_{3}^{*} \rho_{12}=R_{12} \, \sin \omega_{12} \sim R_{12} \, \omega_{12}.
\end{multline}

We conclude by blowing up the lifts of the three submanifolds $F_{i}^{\sigma}=\{z=z_i\}$. In a generic 
fiber over $F_{123}$, (i.e., away from all the partial diagonals), $F_{i}^{\sigma}$ meets the front face $\frakC_{123}$
on the hemisphere at three distinct points. Using the coordinates~\eqref{generic}, 
\begin{equation}
\begin{array}{l}
F_{1}^{\sigma}=\{\cot \omega =\rho_{12}, \theta_{12}=\phi\}\\
F_{2}^{\sigma}=\{\cot \omega =\rho_{12}, \theta_{12}=\phi+\pi\}\\
F_{3}^{\sigma}=\{\cot \omega =\sqrt{1-\rho_{12}^{2}}, \phi_{2}=\phi\}.
\end{array}
\end{equation}
 
On approach to the resolved partial diagonals, two of the points $z_i$ converge to one another on $M$. However, their
lifts converge to distinct points on a hemispherical fiber of the innermost front face. In other words, 
$F_{i}^{\sigma}$ and $F_{j}^{\sigma}$ both intersect $\frakC_{ij}$ while if $k$ is the third value distinct
from $i, j$, then $F_{k}^{\sigma}$ does not. In terms of the coordinates~\eqref{blowdown}, we have
\begin{equation}
\begin{array}{l}
F_{1}^{\sigma}=\{ \omega_{12}=\pi/4 ,\ \theta_{12}=\phi_{12}\}\\
F_{2}^{\sigma}=\{\omega_{12}=\pi/4, \ \theta_{12}=\phi_{12}+\pi\}\\
F_{3}^{\sigma}=\{\cot \omega=\sqrt{1-\rho_{12}^{2}}, \ \phi_{2}=\phi_{12}\}.
\end{array}
\end{equation}

The roles of $z_1$, $z_2$ and $z_3$ are interchangeable in this whole discussion, even though
the Jacobi coordinates $w_1$ and $w_2$ break this symmetry. it is also not hard to check that the 
symmetric group $\Sigma_3$ acts freely on ${\calC}_3$.
% and the final extended configuration
%space $\calC_3$ is the quotient $\wt{\calC}_3/\Sigma_3$. 

As this case makes clear, the geometry illustrated in Figure 2 requires compound singular coordinate
transformations, which quickly become quite involved.  The optimistic interpretation is that the
compound asymptotics of solutions to natural elliptic operators which appear later in this paper
are captured entirely by the intricate but still quite comprehensible geometry of this iterated blowup.

\subsection{Boundary faces of $\calE_k$ and $\calC_k$}\label{ss:bd}
The previous discussion suggests that while it is possible to write out the iterated polar coordinate
systems corresponding to iterated blowups, these become prohibitively complicated after a few steps.
The special cases above should be used to gain intuition about the general case.   In this section 
we undertake a more systematic study of the boundary faces and corners of $\calE_k$ and $\calC_k$.
The goal is to explain some general features of the geometry of these boundary faces, which is done 
through a combination of invariant and local coordinate reasoning.

\medskip

\noindent {\bf Faces of $\calE_k$:}  As per our initial discussion in \S 2.1, the boundary hypersurfaces $F_{\calI}$ 
of $\calE_k$ are in bijective correspondence with subsets $\calI \subset \{1, \ldots, k\}$ by way of the associated 
partial diagonals $\Delta_\calI$.  The set of all such subsets $\{\calI\}$ is ordered by inclusion; thus $\{1, \ldots, k\}$ is the 
unique maximal element and the minimal elements are the singletons $\{j\}$. 
%We associate the directed graph $\mathcal G_k^m$, which has as vertices the elements $\calI$, and 
%an edge from $\calJ$ to $\calI$ if and only if $\calJ \supset \calI$ and $|\calJ| = |\calI| + 1$.  
The associated directed graph $\calG_k$ has a vertex for each $\calI$ and we choose the direction on the edges
to point from $\calI$ to $\calJ$ when $\calI \supset \calJ$. This `flips' the inclusion ordering, so we should
think of $\{1, \ldots, k\}$ as the bottom vertex (or root) of the tree, with the edges flowing upward. 
%Let $\mathcal G_k$ be the subgraph consisting of all nonmaximal vertices, i.e., those $\calI$ with $|\calI| \geq 2$.  
%Finally, set $\calG_k(r)$ to be the collection of vertices in $\calG_k$ at `level $r$', i.e., which are connected to the root 
%by precisely $r$ edges. 

We first consider the interiors of each face $F_{\calI}$.  Write $\Delta_{\calI}^0$ for the open dense subset of elements in 
$\Delta_\calI$ where {\it only} the points $p_i$, $i \in \calI$, coincide.  Fixing $\calI$, let $M^k_\calI$ be the subset 
of $M^k$ where we remove all partial diagonals $\Delta_\calJ^0$ with $\calJ \cap \calI \neq \emptyset$.  The blowup 
\[
[M^k_\calI;  \Delta_{\calI}^0]
\]
can be regarded as a relatively open subset of $\calE_k$. If $|\calI| = \ell$, then the normal bundle of $\Delta_\calI^0$ in $M^k_\calI$ 
is naturally identified with $(TM)^{\oplus (\ell-1)}$. This means that the new front face coming from the blowup of $\Delta_\calI^0$ 
is a bundle over a dense open subset in $M^{k-\ell+1} \cong \Delta_\calI$ with fiber $\bS^{2\ell-3}$. 

Reorder the indices so that $\calI = \{1, \ldots, \ell\}$ and write $z' = (z_1, \ldots, z_\ell)$, $z'' = (z_{\ell+1}, \ldots , z_k)$;
thus $z'' \in M^{k-\ell }$ and $\Delta_\calI$ is the complete diagonal in the $z'$ subsystem. Subdivide $z'$ further, 
writing it as the sum of a center of mass $\zeta^\calI(z')$ and $w'=(w_{1}, \dots, w_{\ell}) \in \CC^{|\calI|}$, 
where $\sum_{i \in \calI} w_i = 0$, so $w'$ lies in a space of real dimension $2|\calI| - 2$. The blowup affects only the
$w'$ coordinates, and we see in this coordinate description that the new front face is locally a fibration 
over $\RR^{2k-2\ell+2}$ with fiber $\bS^{2\ell-3}$. 

We now turn to the more difficult task of understanding the structure of each boundary hypersurface of $\calE_k$ as a manifold 
with corners. By the iterative nature of these spaces, it is sufficient to focus on the innermost face $F_{1 \ldots k}$, which for 
convenience we denote by $F_{\max}$ below. We also write $\calI_{\max} = \{1, \ldots, k\}$. 
\begin{proposition}
The boundary faces and corners of $F_{\max}$ are in bijective correspondence with the connected trees $T \subset \calG_k$
such that any minimal vertex $\calI \in T$ has $|\calI| \geq 2$. 
\label{cornersoffaces}
\end{proposition}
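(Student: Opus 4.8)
The plan is to set up a bijection between boundary faces of $F_{\max}$ and the combinatorial data of connected subtrees $T\subset\calG_k$ whose minimal vertices are all non-singletons, by tracking which front faces $F_\calI$ a given corner lies in. First I would recall the general principle for iterated real blow-ups: a corner of the blown-up space $[X;Y_1;\ldots;Y_m]$ corresponds precisely to a totally ordered (under the blow-up order) collection of centers whose lifts have a nonempty common intersection, together with a choice of corner of $X$ itself; here $X=M^k$ is a manifold without boundary, so only the blow-up centers contribute. Thus a corner of $\calE_k$ meeting $F_{\max}$ is specified by a collection $\calS=\{\calI_1,\ldots,\calI_r\}$ of index sets, all containing — or being compatible with, in the sense of the transversality Lemma — $\calI_{\max}$, such that the lifted partial diagonals $\widetilde\Delta_{\calI_1},\ldots,\widetilde\Delta_{\calI_r}$ have nonempty intersection after all the relevant prior blow-ups. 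The content of the proposition is to identify exactly which such collections $\calS$ survive.

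The key step is to show that a collection $\{\Delta_{\calI_j}\}$ has lifts with nonempty common intersection in the iterated blow-up if and only if the index sets $\calI_j$ form a \emph{laminar family} (a nested set system: any two are either disjoint or one contains the other), equivalently a forest, and — since we are inside $F_{\max}$ and demand connectedness through the root $\calI_{\max}$ — a connected tree $T\subset\calG_k$. One direction is the transversality Lemma already proved: if two index sets $\calI,\calJ$ are \emph{not} nested and not disjoint, i.e. they cross, then after blowing up $\Delta_{\calI\cup\calJ}$ the lifts of $\Delta_\calI$ and $\Delta_\calJ$ become transverse, and one checks in the local Jacobi-type coordinates (as in the $k=3$ computation, e.g. \eqref{Delta12}) that transverse $p$-submanifolds of complementary-type position have disjoint lifts after the intervening blow-ups — so no corner can lie in both $F_\calI$ and $F_\calJ$. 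Hence only laminar families give corners. For the converse, given a laminar family forming a connected tree rooted at $\calI_{\max}$, I would produce an explicit point in the intersection of the corresponding lifted faces by iterating the coordinate construction: at each vertex $\calI$ with children $\calI^{(1)},\ldots,\calI^{(s)}$ one introduces nested polar/spherical coordinates $(\rho_\calI,\Theta_\calI)$ on the relative normal bundle exactly as in \eqref{S3}–\eqref{crdnate12}, and the locus where all the $\rho_\calI=0$ for $\calI\in T$ is a nonempty corner lying in $\bigcap_{\calI\in T}F_\calI$. The minimality condition $|\calI|\geq 2$ for minimal vertices of $T$ is automatic: singletons $\{j\}$ are not among the blown-up diagonals of $\calD_k$ (those have $|\calI|\geq 2$ by definition of $\Delta_\calI$), so they simply do not label faces of $\calE_k$; a minimal vertex of an admissible $T$ must therefore have at least two elements.

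Finally I would record the codimension count — a corner indexed by $T$ has codimension $|T|$ inside $\calE_k$ and $|T|-1$ inside $F_{\max}$ (since $\calI_{\max}\in T$ always), matching the number of boundary defining functions $\rho_\calI$, $\calI\in T$, that vanish there — and check well-definedness: different admissible blow-up orders give the same corner because, by the transversality Lemma and the two "order-independence" cases quoted from the text (nested, or transverse centers), the final space $\calE_k$ is independent of the order, so the combinatorial labeling is intrinsic. The main obstacle I anticipate is the converse existence step: verifying that for an \emph{arbitrary} connected tree $T$ the iterated polar coordinates can be simultaneously arranged so that all lifted faces genuinely meet, rather than the lift of some $\Delta_\calI$ being pushed off to infinity or onto a different boundary component by an earlier blow-up at a descendant vertex. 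This requires a careful inductive bookkeeping of how center-of-mass splittings at a parent vertex interact with displacement coordinates introduced at child vertices — essentially a clean statement and proof that the Jacobi-coordinate hierarchy is compatible with the laminar structure of $T$ — and that is where I would spend most of the effort.
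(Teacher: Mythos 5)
Your proposal follows essentially the same route as the paper: you attach to each corner the collection of index sets whose front faces contain it, use the fact that overlapping but incomparable index sets have disjoint lifted diagonals (hence disjoint faces $F_\calI$, $F_\calJ$) to conclude that this collection is a tree in $\calG_k$ rooted at $\calI_{\max}$, and then establish the converse nonemptiness of $\bigcap_{\calI \in T} F_\calI$ by an inductive construction that exploits the nested/disjoint dichotomy of a laminar family. The only difference is cosmetic: you realize the converse via an explicit hierarchy of Jacobi/polar coordinates adapted to the tree, whereas the paper argues branch by branch, using that corners coming from disjoint clusters intersect transversely, but both rest on the same structural facts, so your argument is sound.
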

The correspondence is that if $Z$ is any corner of this principal face, then the vertices of the associated tree $T = T_Z$
are the subsets $\calI$ for which the face created by blowing up $\Delta_{\calI}$ contains $Z$ in its closure.  

To get a feel for this, consider a few examples. First, a boundary hypersurface of $F_{\max}$ is the intersection
of precisely two boundary hypersurfaces of $\calE_k$: $F_{\max}$ and some other $F_{\calI}$. The corresponding 
tree has two vertices, $\calI_{\max}$ and $\calI$, connected by an edge.  Slightly more generally, if
$\Delta_{\max} \subset \Delta_{\calI_1} \subset \Delta_{\calI_2} \subset \ldots \subset \Delta_{\calI_\ell}$,
then $Z = F_{\max} \cap F_{\calI_1} \cap \ldots \cap F_{\calI_\ell}$ is a (nonempty) corner, and $T_Z$ is the
tree $\calI_{\max} \to \calI_1 \ldots \to \calI_\ell$.  On the other hand, if $\calI$ and $\calJ$ are disjoint, then
the lifts of $\Delta_{\calI}$ and $\Delta_{\calJ}$ to $[M^k; \Delta_{\max}]$ intersect transversely, so the blowups
of these two partial diagonals may be taken in either order, and the intersection $F_{\max} \cap F_{\calI}\cap F_{\calJ}$ 
is a corner of codimension two in $F_{\max}$.  The corresponding tree has root $\calI_{\max}$ connected to 
$\calI$ and $\calJ$, but these two vertices are at the same level and not connected to one another. 
There is a similar description for any collection $\calI_1, \ldots, \calI_s$ of disjoint subsets. 

To prove Proposition~\ref{cornersoffaces}, we recapitulate the construction of $\calE_k$, emphasizing
what happens at each stage of the iterative process.   A basic principle here is that the lifts of the partial diagonals 
at each stage are `$p$-submanifolds'. Recall that if $X$ is a manifold with corners, then a submanifold $Y$ 
is called a $p$-submanifold if, near every point $q \in Y$, there is a local coordinate system $(x,y) 
\in (\RR^+)^\ell \times \RR^m$ for $X$ such that $Y$ is given by setting some number of the $x_i$ 
and $y_j$ to $0$. In other words, locally, $Y$ is a product inside of $X$. Denote by $M^k(r)$ the space 
obtained after blowing up all partial diagonals $\Delta_\calI$ with $|\calI| \geq k-r$.  Also, let $F(r)$ 
denote the central front face in $M^k(r)$, so $F(k) = F_{\max}$. 

At the initial step, $M^k(0) = [M^k; \Delta_{\max}]$ and $F(0)$ is an $\bS^{2k-3}$ bundle over $M \cong \Delta_{\max}$. 
The partial diagonals $\Delta_\calI$ with $|\calI| = k-1$ lift to $M^k(0)$ to a disjoint collection of four-dimensional 
submanifolds with boundary; these intersect each $\bS^{2k-3}$ fiber of $F(0)$ in $k$ disjoint copies of $\bS^1$.  The space 
$M^k(1)$ is obtained by blowing up these lifted partial diagonals, and doing so yields a manifold with corners of 
codimension two. The new boundary hypersurface has $\bS^{2k-5}$ fibers over a (disconnected) four-dimensional base. 
The codimension two corner of $M^k(1)$ is the boundary of $F(1)$; it has $k$ components, each of which  is 
a bundle with fiber $\bS^1 \times \bS^{2k-5}$ over $\Delta_{\max}$. 

Continuing on, the partial diagonals $\Delta_{\calI}$ with $|\calI| = k-2$ lift to $p$-submanifolds in $M^k(1)$. 
These lifts intersect each $\bS^{2k-5}$ fiber in the new boundary face (over the $(k-1)$-fold diagonals) 
in copies of $\bS^1$, and intersect the fibers of $F(1)$ in a copy of $\bS^3$.  The important observation 
is that even in low dimension, i.e., $k=4$, these $3$-spheres in $\bS^{2k-3}$ do not intersect in $M^k(1)$ 
by virtue of the fact that we have already blown up their intersection loci, the union of $\bS^1$.
When we blow up this set of lifted partial diagonals, the new boundary hypersurface of $F(2)$ is 
fibered by copies of $ [\bS^3; \sqcup \, \bS^1] \times \bS^{2k-7}$, and the corners of this boundary face, 
which are now corners of codimension $3$ in $M^k(2)$, has fibers equal to $\bS^1 \times \bS^1 \times \bS^{2k-7}$. 

As we proceed further in this construction, the thing to note is that at each stage, the lift of each $\Delta_\calI$,
$|\calI| = k - r  - 1$, to $M^k(r)$ is a $p$-submanifold. Furthermore, if $\calI$ and $\calJ$ are two subsets of
size $k-r-1$, then either $\calI \cap \calJ$ share an element in common, or the intersection is empty.  In the 
first case, it is straightforward to check that their lifts to $M(r)$ are in fact disjoint.  In the second case, these two
lifted diagonals intersect transversely, and hence it is not necessary to blow up their intersection since we can 
blow these up in either order to obtain the same result. 

% To conclude, extrapolating from the discussion above, for any $\calI$, the corresponding lifted partial diagonal is a 
% $p$-submanifold in $F(r)$, $r = k-|\calI| - 1$, which intersects the boundaries 
% of $F(r)$ at the faces created by previous blowups of partial diagonals $\Delta_\calJ$ with $\calI \cap \calJ \neq \emptyset$,
% and may also intersect other partial diagonals with $\calI \cap \calJ = \emptyset$ in the interior
% of $F(r)$, and these latter intersections are transverse. 

Now let us return to Proposition \ref{cornersoffaces}.  As already described, given a corner $Z$ of $F_{\max}$, we
may associate to it a subgraph $T_Z \subset \calG_k$. Clearly $T$ is connected since all branches lead
to the root $\calI_{\max}$ (since $Z \subset F_{\max}$ in particular).    Next, suppose that $\calI \cap \calJ \neq \emptyset$,
but neither one contains the other.  Then the lifts of $\Delta_{\calI}$ and $\Delta_{\calJ}$ would already be separated 
in $M(r)$, $r = k - |\calI \cup \calJ|$, which is where $\Delta_{\calI \cup \calJ}$ is blown up, hence $F_\calI \cap F_\calJ = \emptyset$.  
This shows that it is impossible for there to exist $\calI_1, \calI_2 \supset \calK$ with  $\calI_1 \not\supset \calI_2$ and 
$\calI_2 \not\supset \calI_1$.  This proves that $T_Z$ is a tree.

Conversely, if $T$ is any connected tree in $\calG_k$ emanating from $\calI_{\max}$ (and which does not terminate
at a node with $|\calI| = 1$), then we must show that
\[
Z = \bigcap_{\calI \in T} F_{\calI} %= \bigcap_{r=0}^{k-1}  \bigcap_{|\calI| = r \atop \calI \in T}  F_{\calI}
\]
is nonempty.  Since $T$ is a tree, if $\calI$ and $\calJ$ lie on the same branch, then one of these is a proper 
subset of the other, while if they lie on different branches, then they are disjoint. We can divide $T$ into 
branches $B_i$, and it follows from the earlier discussion that the intersection of the $F_{\calI}$ along a 
branch $B_i$ is a nonempty corner $Z_i$; on the other hand, if the branch $B_2$ is rooted at some
vertex of another branch $B_1$, then we can reduce back to the case of one node splitting in two as in
the previous paragraph to see that the corresponding corners $Z_1$ and $Z_2$ intersect transversely 
in a nontrivial subset. We conclude using induction on the number of branches.   This proves the result. 

\medskip

\noindent{\bf Faces of $\calC_k$: }  We wish to carry out a similar analysis of the faces of $\calC_k$. As before, we 
proceed inductively, so it suffices to analyze the structure of the central face $\frakC_{\max}$. 

% Let $T$ be a connected subtree in $\calG_k'$. Denote by $T(r)$ the collection of vertices in $\calG_k'$ at level $r$, i.e., connected 
% to the root $\{1, \ldots, k\}$ by precisely $r$ edges. Then $T(r)$ is 
% equivalently a collection of subsets $\calI_1^{r}, \ldots, \calI_{s_r}^r$ which form a decomposition 
% of $\{1, \ldots, k\}$ and such that each $\calI^{r-1}_i$ is partitioned into a union of some number of the $\calI^r_j$.  In other words, 
% going up the tree, the levels $T(r)$ correspond to a particular sequences of partitions of $\{1, \ldots, k\}$, each partition a 
% refinement of the one just before. For any vertex $v \in T$, we may clip the tree at $v$ and consider the branch, or subtree, 
% $T_v$ which begins at the node $v$ and contains all vertices at higher levels which are connected to $v$. We may also consider the 
% chain of vertices $T^v$ which begins at the root and terminates at $v$. 

Recall that the construction of $\calC_k$ involves iteratively blowing up the coincidence sets $F_{\calI}^\sigma$ in 
$\calE_k \times M$ defined in~\eqref{liftedincset}. Let $\calC_k(r)$ denote the space obtained by blowing up all such coincidence sets with 
$|\calI| = j$, $k \geq j \geq k-r$, in order of decreasing cardinality, and also write $\frakC(r)$ for
the central front face of $\calC_k(r)$. 

If $\rho_{\max}$ is a defining function for $F_{\max}$ in $\calE_k$, then $\calC_k(0)$ is the blowup of the set
$\{\rho_{\max} = 0,\ z = \sigma^{\max}(\frakp)\}$. The front face $\frakC(0)$ is fibered over $F_{\max}$
with fiber a half-sphere $\bS^2_+$. The lifts of the coincidence sets $F_{\calI}^\sigma$  intersect
$\frakC(0)$ as (usually mutually intersecting) $p$-submanifolds.   We denote the faces created by 
blowing up $F_\calI^\sigma$ by $\frakC_\calI$; with a slight abuse of notation, we often do not distinguish 
between this face at some intermediate step of the construction and at the final stage in $\calC_k$.

\begin{figure}[h]
\centering
\includegraphics[width = 0.65\textwidth]{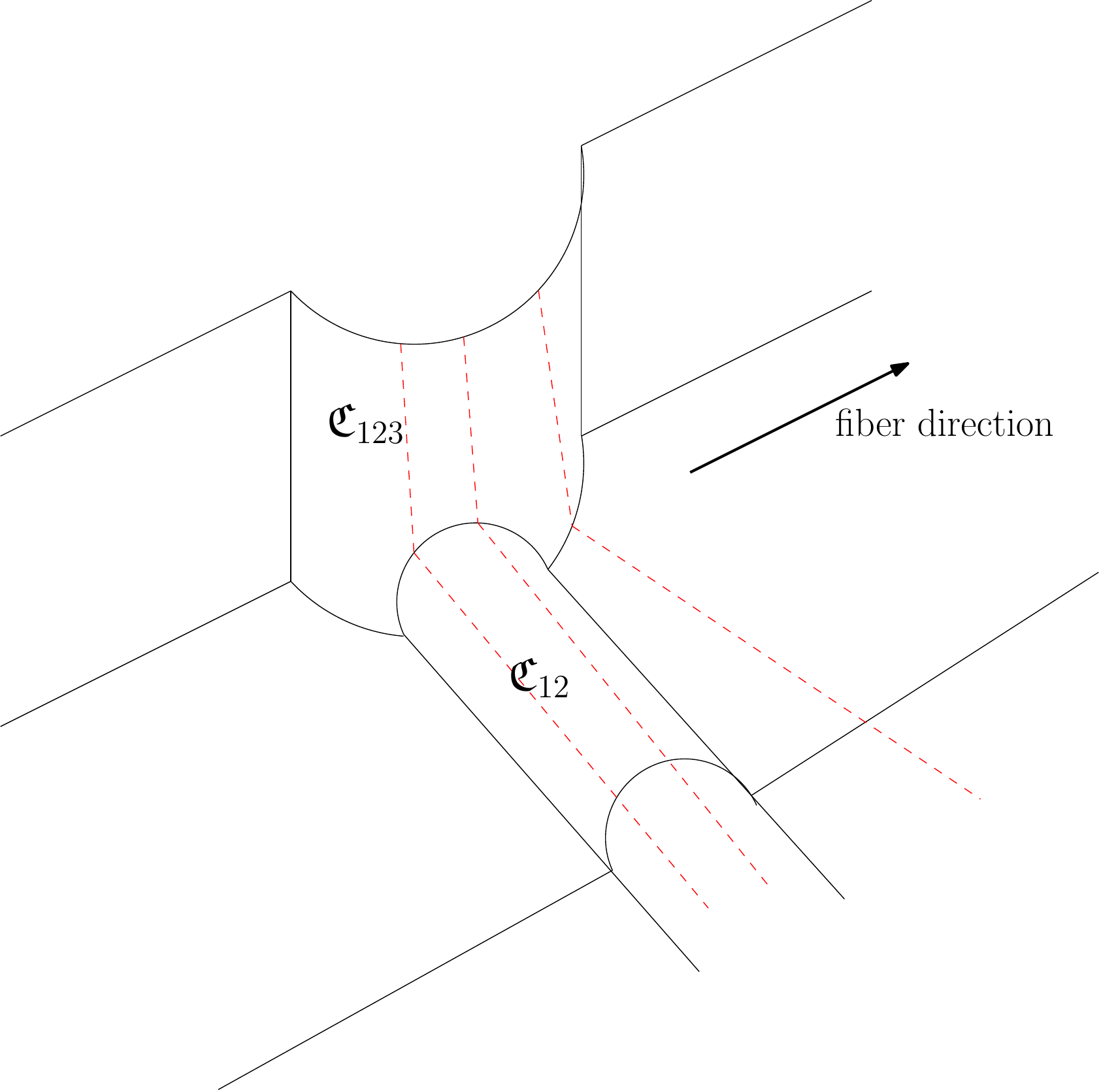}
\caption{The intersection of front faces $\frakC_{123}$ and $\frakC_{12}$, where the red lines are where the singleton coincidence sets $F_{i}^\sigma, i=1,2,3$ meet the front faces}
\label{f:blowup}
\end{figure}

For expository purposes we jump immediately to the final step and consider the 'generic' region, i.e., the preimage of the 
interior of $\calE_k$. The only coincidence sets in this region are those with $|\calI| = 1$, which means
that the blowups in these region are unaffected by any of other blowups. These (singleton) coincidence sets $F_i^\sigma$ 
are of codimension two in $\calE_k \times M$, so the faces $\frakC_i$ are fibered by copies of $\bS^1$.  In particular, 
the intersection of $\frakC_i$ with $\frakC_{\max}$ intersects each $\bS^2_+$ fiber in a $\bS^1$.  In other words,
the portion of $\frakC_{\max}$ over the interior of $F_{\max}$ (i.e., the principal front face of
$\calC_k$ away from all the faces lying over partial diagonals) is fibered by copies of $\bS^2_+$ 
blown up at $k$ distinct points, the locations of which are determined by the corresponding point in $F_{\max}$
(the `directions of approach' of the coalescing cluster of $k$  points).

Now return to the construction in the proper order, and consider the passage from $\calC_k(0)$ to $\calC_k(1)$. 
This involves blowing up the coincidence sets $F_{\calI}^\sigma$ with $|\calI|  = k-1$. Over the interior of the faces 
$F_\calI \times M$ in $\calE_k \times M$, the picture is analogous to the blowup at the principal front face in 
$\calE_{k-1}\times M$: indeed, the corresponding point in $M^k$ is, up to reordering, of the form $(p, \ldots, p, p_k)$. 
As $\rho_{\max} \to 0$, the points $p$ and $p_k$ coalesce. The coincidence set $F_{\calI}^\sigma$ is a $p$-submanifold 
which intersects the codimension two corner $(F_{\max} \cap F_{\calI}) \times M$. When it is blown up, the new face 
is a bundle with $\bS^2_+$ fibers over $F_{\calI}$, uniformly to this intersection.  

The new feature is that the fibers of $\calC_k(1) \to \calE_k$ over the corner $F_{\max} \cap F_{\calI}$ are each a `tower' 
of hemispheres of height two, i.e., two copies of $\bS^2_+$, the second one attached along its boundary to the circle 
created by blowing up a point in the first $\bS^2_+$. The submanifold $F_{k}^\sigma$ (corresponding to the singleton 
set $\{k\} = \{1, \ldots, k\}\setminus \calI$) intersects each of these fibers 
at a point of $\bS^2_+$ away from the this second hemisphere.  On the other hand, the other $F_{i}^\sigma$ (away 
from the boundaries of the corner $F_{\max} \cap F_{\calI}$) intersect the $\bS^2_+$ fibers of the face over $F_{\calI}$ 
in $(k-1)$ distinct points, so after blowing these up, the fibers are hemispheres blown up at $k-1$ points.  At 
the intersection with $F_{\max}$, the fibers are each a tower of two hemispheres, the inner one blown up at
$k-1$ distinct points and the outer one at one additional point.   All of this has been illustrated earlier in
Figure~\ref{f:blowup} for the case $k=3$. 

The rest of the construction follows the same pattern.  When we perform a blow up in $\calC_k(r)$ of a coincidence set
$F_{\calI}^\sigma$ for some $\calI$ with $|\calI| = k - r -1$, having previously blown up all coincidence
sets $F_{\calJ}^\sigma$ with $|\calJ| > k-r-1$, then the interior of this face, i.e., the portion lying over the
interior of $F_{\calI}$, is again fibered by copies of $\bS^2_+$, and in this region the blowups of the sets 
$F_i^\sigma$, $i \in \calI$, produce hemisphere fibers blown up at $|\calI|$ distinct points. 
These fibers intersect the fibers of the previous faces $\frakC_{\calJ}$ in similar ways, creating a new level in 
the tower of hemispheres over those corners. 

We now state the final result, which is a description of all boundary faces and corners of $\frakC_{\max}$.
We have already described this face over the interior of $F_{\max}$: it is a fibration with each fiber a copy 
of $\bS^2_+$ blown up at $k$ distinct points.  More generally, at any corner $Z$ of $F_{\max}$, consider 
the preimage $\frakC_{\max}(Z)$, i.e., the portion of the boundary of $\frakC_{\max}$ lying over $Z$.  
This is a tower of hemispheres, each blown up at a set of points, so that altogether $k$ points 
in this entire tower are blown up.  Figure~\ref{treetower} illustrates a typical scenario when $k=5$. 

\begin{figure}[h]
\centering
\includegraphics[width = 0.7\textwidth]{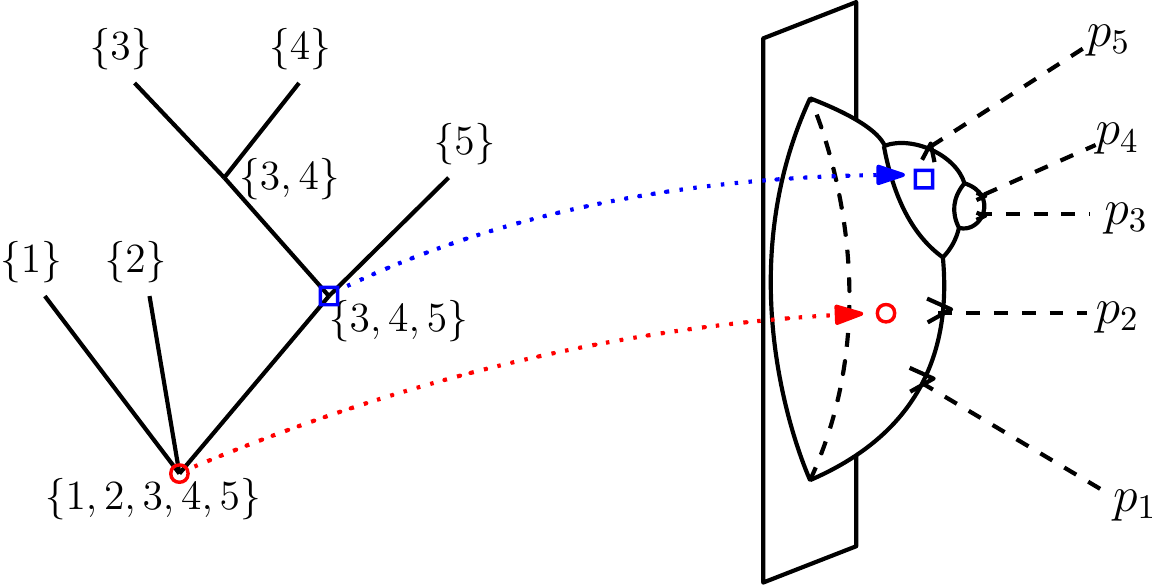}
\caption{A tree structure encodes the clustering of bubbles}
\label{treetower}
\end{figure}

We can classify the towers of hemispheres which arise in this way.  Recall first that associated to the corner $Z$ is a 
tree $T$ in the power set $\calG_k$.  We claim that the half-spheres in this tower correspond precisely to the nodes 
in $T$.   Indeed, $T$ consists of all multi-indices $\calI$ with $|\calI| > 1$ such that $F_{\calI} \supset Z$. 
We use induction on the height of the tree. The case of height $0$ and $1$ were described earlier. The 
same argument applies when we pass from the space obtained up to height $r$ as we take the blowups 
corresponding to the nodes in $T$ at height $r+1$. This shows that there is a half-sphere corresponding to 
each node of $T$.  Conversely, the blowups required to construct this tower of half-spheres corresponds exactly 
to the sequence of blowups in this induction.    In summary, we have the
\begin{proposition}
The boundary faces and corners of $\frakC_{\max}$ in $\calC_k$ are in bijective correspondence
with pairs $(T, \calI)$, where to each corner $\mathfrak Z$ of $\frakC_{\max}$, $T$ is the tree associated 
to the corner $Z$ of $F_{\max}$ under $\mathfrak Z$ and $\calI$ is a node of $T$. 
\end{proposition}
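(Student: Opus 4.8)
The plan is to reduce the statement to the description of the fibers of the map $\wh\beta\colon\frakC_{\max}\to F_{\max}$ as towers of half-spheres, and then to match the combinatorics of these towers with the tree data supplied by Proposition~\ref{cornersoffaces}. First I would record that, because $\wh\beta$ restricts to a fibration over the interior of each face and to a b-fibration near corners, every boundary face or corner $\mathfrak Z$ of $\frakC_{\max}$ maps onto a well-defined boundary face or corner $Z$ of $F_{\max}$, and conversely the part of $\del\frakC_{\max}$ lying over a fixed corner $Z$, namely $\frakC_{\max}(Z)$, is precisely the union of the corners $\mathfrak Z$ over $Z$. By Proposition~\ref{cornersoffaces}, such $Z$ are in bijection with connected trees $T\subset\calG_k$ rooted at $\calI_{\max}$ whose minimal vertices all have cardinality at least two. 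So everything comes down to showing that, for fixed $Z$ with associated tree $T$, the corners of $\frakC_{\max}$ lying over $Z$ are in natural bijection with the nodes of $T$.

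The heart of the argument is the claim, already sketched above, that the half-spheres in the tower $\frakC_{\max}(Z)$ correspond bijectively to the nodes of $T$; I would prove this by induction on the height $h(T)$, mirroring exactly the inductive passage from $\calC_k(r)$ to $\calC_k(r+1)$. The base cases $h(T)=0$ and $h(T)=1$ are the two situations analyzed explicitly in the text (a single hemisphere, respectively a tower of height two, with the singleton coincidence sets blown up inside but contributing no extra half-spheres). For the inductive step, going from height $r$ to $r+1$ amounts to blowing up the coincidence sets $F_{\calI}^\sigma$ for those nodes $\calI\in T$ with $|\calI|=k-r-1$ lying at level $r+1$; the analysis already given shows each such blowup glues exactly one new copy of $\bS^2_+$ to the existing tower, attached along the circle created by blowing up a point inside the hemisphere indexed by the parent node of $\calI$. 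Nested nodes stack their hemispheres, while disjoint nodes at the same level attach along disjoint circles in a common parent hemisphere; in either case the new half-spheres are indexed precisely by the new nodes. Hence the half-spheres of $\frakC_{\max}(Z)$ are canonically indexed by the nodes of $T$, and since the corners of $\frakC_{\max}$ over $Z$ are the boundary faces (and their iterated intersections) cut out by these half-spheres, each corner $\mathfrak Z$ over $Z$ singles out a node $\calI=\calI(\mathfrak Z)\in T$.

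For the reverse map, given a pair $(T,\calI)$ with $\calI$ a node of $T$, I would run the blowup sequence prescribed by $T$ to build the tower $\frakC_{\max}(Z)$ and take the face corresponding to the $\calI$-th half-sphere; the transversality assertions of the Lemma in \S\ref{ss:Ck} (namely $F_{\calI}^\sigma\cap F_{\calJ}^\sigma=F_{\calI\cup\calJ}^\sigma$ when $\calI\cap\calJ\neq\emptyset$, and transversality of the lifts otherwise) guarantee that the blowups along different branches commute, so this face is independent of the order of blowups, hence well defined. Checking that the two constructions are mutually inverse is then a matter of unwinding the definitions, together with the observation that a corner of $\frakC_{\max}$ records exactly which collection of half-spheres it lies in.

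I expect the main obstacle to be the bookkeeping in the inductive step: one must verify that when the node $\calI$ at level $r+1$ is blown up, the lift of $F_{\calI}^\sigma$ meets the tower constructed so far only in the interior of the single hemisphere carried by the parent node of $\calI$ — so that precisely one new hemisphere appears, glued in the asserted position — and that disjoint index sets really do attach their hemispheres along disjoint loci rather than interacting. Both facts follow from the same principle used throughout \S\ref{ss:Ck}, that two coincidence sets either share an index (whereupon their intersection is a lower coincidence set already blown up) or are transverse, but making this fully rigorous in the iterated polar coordinates illustrated in the $k=3$ example is where the genuine work resides.
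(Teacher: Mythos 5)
Your proposal is correct and follows essentially the same route as the paper: an induction on the height of the tree $T$, with the height $0$ and $1$ cases being the explicit hemisphere-tower descriptions given for $\calC_k(0)$ and $\calC_k(1)$, and the inductive step showing that each blowup of $F_{\calI}^\sigma$ at the next level attaches exactly one new half-sphere inside the parent hemisphere, so that half-spheres (hence faces and corners of $\frakC_{\max}$ over $Z$) are indexed by nodes of $T$, with the converse coming from the fact that the tower is built by precisely this sequence of blowups. The only difference is that you spell out the reduction via the b-fibration and Proposition~\ref{cornersoffaces} more explicitly than the paper does, which is harmless.
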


We also consider {\em augmented trees}, which are simply trees as before, but now allowing the terminal nodes
to consist of single-element sets.  These nodes correspond to the faces $\mathfrak C_j$ of $\calC_k$, but
in particular to the boundary components of the hemispheres in the penultimate faces. 

\medskip

\noindent {\bf Cluster decompositions:}  It is helpful to understand both $\calE_k$ and $\calC_k$ using the intuition that 
each point $\frakp \in M^k$ and $(\frakp, z) \in M^k \times M$ has a neighborhood $\calU$ in which there is a well-defined 
decomposition of the points into clusters $\calQ_1, \ldots, \calQ_{k'}$ for some $k' \leq k$.  Each cluster $\calQ_i$ has a 
center of mass $\zeta_i$, and the $k'$-tuple $(\zeta_1, \ldots, \zeta_{k'})$ determines a divisor, as do the points in each $\calQ_i$. 
This cluster decomposition picture changes as the points move around, and the corners of $\calE_k$ and $\calC_k$ quantify
precisely where the clusters trade points.   

We say a bit more about this now. Recall the blowdown map $\beta: \calE_k \to M^k$.  Fix $\epsilon>0$ and 
a collection of disjoint index sets $\calI_* = \{\calI_j\}$, $j = 1, \ldots, \ell$ such that $\cup \calI_j = \{1, \ldots, k\}$. 
Now define the open sets 
\begin{multline}\label{e:cluster}
\calU_{\calI_{*}, \epsilon} = \beta^* \{ \frakp \in M^{k}: \ d(p_{i'},p_{j'})>\epsilon, \text{ if } i'\in \calI_{i}, j'\in \calI_{j}, \forall i\neq j\}. \hfill 
\end{multline}
(The precise distance function used here is not important.) 
Thus in each $\calU_{\calI_*, \epsilon}$, coalescing occurs only within each cluster. Furthermore, any point $q \in \calE_k \setminus F_{\max}$
lies in one of these open sets.  Indeed, $\beta(q) = \frakp$ lies in some corner $Z$ of $\calE_k$. Take the corresponding tree $T$.
Note that since $q \not\in F_{\max}$, the lowest element of $T$ is some $\calI \neq \calI_{\max}$.  Denote by
$\calI_{1}, \dots, \calI_{\ell'}$ the vertices at the other extreme, i.e., the highest elements of $T$.  If $\cup_{j=1}^{\ell'} \calI_{j} 
\neq \calI_{\max}$, set $\ell=\ell'+1$ and define $\calI_{\ell}=\calI_{\max} \setminus \cup_{j=1}^{\ell'} \calI_{j}$. These correspond
to the `free' points which are not in any larger cluster. Otherwise, let $\ell=\ell'$.  
\begin{lemma}\label{l:tree}
There exists $\epsilon>0$ such that $q \in U_{\calI_{*},\epsilon}$.
\end{lemma}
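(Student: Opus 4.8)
The plan is to unwind the definition of $\calU_{\calI_*,\epsilon}$ and reduce the statement to a purely combinatorial fact about the coincidence pattern of the divisor $\frakp=\beta(q)$. Since
\[
\calU_{\calI_*,\epsilon}=\beta^{-1}\{\frakp'\in M^k : d(p'_{i'},p'_{j'})>\epsilon \text{ whenever } i'\in\calI_i,\ j'\in\calI_j,\ i\neq j\},
\]
and these open sets increase as $\epsilon\downarrow 0$, it suffices to produce a single $\epsilon>0$ for which $\frakp$ itself lies in the set on the right. As there are only finitely many pairs $(i',j')$ to inspect, this follows once we show that $p_{i'}\neq p_{j'}$ as points of $M$ whenever $i'$ and $j'$ lie in distinct members of the partition $\calI_*$: one then takes $\epsilon$ to be, say, half the minimum of the finitely many positive numbers $d(p_{i'},p_{j'})$ over such pairs.

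\textbf{The combinatorial claim.} To prove $p_{i'}\neq p_{j'}$ I would argue by contradiction. If $p_{i'}=p_{j'}$ then $\frakp\in\Delta_{\{i',j'\}}$, so $\{i',j'\}$ is contained in a maximal index set $\calK$, necessarily with $|\calK|\geq 2$, all of whose entries of $\frakp$ coincide. Two structural facts about $\calK$ are then needed. First, $q\in F_\calK$: since $\calK$ is a maximal coincidence set of $\frakp$, the entire fibre $\beta^{-1}(\frakp)$ lies in the front face $F_\calK$ — this is immediate from the iterated blow-up construction of \S\ref{ss:bd} — so in particular $q\in F_\calK$. Hence $\calK$ belongs to the collection $T$ of index sets $\calL$ with $|\calL|\geq 2$ and $q\in F_\calL$, i.e.\ the tree attached to the corner of $\calE_k$ that contains $q$. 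Second, $\calK$ is a maximal element of $T$ for inclusion: if $\calL\in T$ with $\calL\supsetneq\calK$, then $q\in F_\calL$ forces $\frakp\in\Delta_\calL$, so the entries of $\frakp$ indexed by $\calL$ all coincide, contradicting maximality of $\calK$.

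\textbf{Closing the argument.} Next I invoke the laminar structure of $T$ established in the proof of Proposition~\ref{cornersoffaces}: any two members of $T$ are either nested or disjoint, so the maximal members of $T$ are pairwise disjoint. By the construction preceding the lemma, these maximal members are exactly $\calI_1,\dots,\calI_{\ell'}$, while $\calI_\ell$, if present, consists precisely of the indices lying in no member of $T$. Since $i'\in\calI_i$, $j'\in\calI_j$, $i\neq j$, and $i',j'\in\calK\in T$, neither $\calI_i$ nor $\calI_j$ can be the `free' block $\calI_\ell$, so both are maximal members of $T$; but $\calK$ is then a maximal member of $T$ containing both $i'$ and $j'$, and disjointness of distinct maximal members forces $\calI_i=\calK=\calI_j$, contradicting $i\neq j$. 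This contradiction proves the combinatorial claim, and with $\epsilon$ chosen as above we conclude $q\in\calU_{\calI_*,\epsilon}$.

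\textbf{Main obstacle.} The reduction to a condition on $\frakp$ and the choice of $\epsilon$ are routine; the step requiring care is pinning down the two structural facts precisely: that $q\in F_\calK$ whenever $\calK$ is a maximal coincidence set of $\frakp$ (equivalently, that the maximal vertices of the tree/forest attached to the corner containing $q$ are exactly the maximal coincidence sets of $\frakp$), together with their pairwise disjointness. Both follow from the step-by-step description of $\calE_k$ in \S\ref{ss:bd}, but should be stated cleanly rather than left to the informal tree bookkeeping. The hypothesis $q\notin F_{\max}$ enters only to ensure $\calI_{\max}\notin T$, so that $\calI_*$ is a genuine, nontrivial partition of $\{1,\dots,k\}$.
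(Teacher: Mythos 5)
Your proposal is correct and takes essentially the same route as the paper: both reduce the statement to the fact that $\frakp=\beta(q)$ itself separates the blocks of $\calI_*$ (points indexed by distinct members are distinct points of $M$), and then choose $\epsilon$ below the minimal separation. The only difference is one of detail: the paper dismisses the combinatorial core with ``it is then easy to see,'' whereas you make it explicit by showing $\beta^{-1}(\frakp)\subset F_{\calK}$ for each maximal coincidence set $\calK$ and invoking the laminar (nested-or-disjoint) structure of $T$ — a faithful elaboration of the paper's argument rather than a different one.
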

\begin{proof}
By the definition of $T$, $\beta(q) = \frakp$ lies in the intersection of diagonals $\{\Delta_{\calI_{j}}\}_{j=1}^{\ell-1}$.  We must
first choose $\epsilon$ so that the $\epsilon$-ball around $\frakp$ in $M^k$ does not intersect any other partial diagonals.
Within this ball, clustering only happens amongst the points $p_i$ with $i$ lying in a single index set $\calI_{j}$. 
Since $q \notin F_{\max}$, there are at least two clusters, i.e., $\ell\geq 2$, so we can suppose that $2\epsilon$ is smaller
than the minimal distance between the various clusters. It is then easy to see that $\frakp$ is contained in the open 
set  $U_{\calI_{*},\epsilon}$. 
\end{proof}

These cluster decompositions allow us to describe some further useful structure of the spaces $\calE_{k}$ and $\calC_{k}$.
\begin{lemma}\label{l:tree}
\begin{enumerate}
\item[(1)] For any $q \in \calE_k \setminus F_{\max}$ and associated neighborhood 
$\calU_{\calI_{*}, \epsilon}$, there exists a smaller neighborhood $\calU \subset U_{\calI_{*}, \epsilon}$ of $q$ 
which is a product of neighborhoods $\calU_i$ in $\calE_{k(i)}$ for smaller values of $k(i)$. More specifically,
$$
\calU = \Pi_{j=1}^{\ell} \calU_j\ \ \mbox{where}\quad \calU_j \subset \calE_{|\calI_{j}|}.
$$
\item[(2)]  For $q \in \calE_k$ and $\calU$ a product neighborhood of $q$ as in (1), there exist $\ell$ open sets 
$\calV_{i} \subset \calC_{k}$, $\cup_{i=1}^{\ell} \calV_{i}=\wh{\beta}^{-1}(\calU)$, so that in each $\calV_{i}$, the fibration $\wh{\beta}$ is a product form
$$
\left( \wh{\beta}_{|\calI_i|}:  \tilde \calV_{i} \to \calU_{i}\right)\times \Pi_{j\neq i} \calU_{j},
$$ 
where $\tilde \calV_{i}\subset \calC_{|\calI_{i}|}$.
\item[(3)]  Consider any point $q \in \calC_k$, and let $(T, \calJ)$ be the data encoding the hemisphere tower
on which it sits. We assume that $q$ lies in the interior of the maximal hemisphere of this tower. 
Let $\calJ'=\calJ\setminus \cup_{\calJ \supset \calI \in T}\calI$ be the set of free points in $\calJ$. If $\calJ' \neq \emptyset$,  
then there exists a neighborhood of $z$ with product decomposition 
$$
\wh{\beta}^{-1} (\calU_{\calI_{*}, \epsilon}) \supset  \calV_{\calC} \times \calV_{\calE}, \ \ \calV_{\calC}\subset \calC_{|\calJ'|}, \ 
\calV_{\calE} \subset \calE_{k-|\calJ'|}. 
$$
\item[(4)] If there are no free points, i.e., $\calJ' = \emptyset$, then there is a decomposition
$$
q \in  \calV_{\calC} \times \calV_{\calE},\ \ \calV_{\calC}\subset M, \ \ \calV_{\calE} \subset \calE_{k}.
$$
\end{enumerate}
\end{lemma}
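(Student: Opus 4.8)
The plan is to prove Lemma~\ref{l:tree} one clause at a time, building each subsequent part on the local product structure established in the previous one. For part (1), I would start from the neighborhood $\calU_{\calI_*,\epsilon}$ supplied by the preceding Lemma~\ref{l:tree} (the $\epsilon$-ball version), and observe that in this set the clustering among the points $p_i$, $i\in\calI_j$, is entirely decoupled from the clustering in other index sets $\calI_{j'}$, because the pairwise distances between clusters are bounded below by $\epsilon$. Concretely, choosing local holomorphic coordinates on $M$ near each of the $\ell$ cluster centers $\zeta_j$, the map $M^k \supset \calU_{\calI_*,\epsilon} \to \prod_j M^{|\calI_j|}$ is a diffeomorphism onto its image near $\frakp$, and under this identification the partial diagonals $\Delta_\calK$ with $\calK\subset\calI_j$ correspond precisely to the partial diagonals in the $j$-th factor $M^{|\calI_j|}$, while $\Delta_\calK$ with $\calK$ straddling two clusters do not meet this neighborhood at all. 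Since the blow-up construction of $\calE_k$ only involves blowing up partial diagonals, and blow-up commutes with restriction to open sets and with products of the ambient spaces, we get $\calU \cong \prod_{j=1}^\ell \calU_j$ with $\calU_j \subset \calE_{|\calI_j|}$. The one thing to be careful about is the ordering of the iterated blow-ups, but since $\calK\subset\calI_j$ and $\calK'\subset\calI_{j'}$ with $j\neq j'$ are disjoint index sets, their diagonals are transverse (by the first Lemma in \S\ref{ss:Ek}), so the blow-ups in different factors commute and the product is well-defined.

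For part (2), I would carry the product decomposition of part (1) upstairs through the construction of $\calC_k$. The key observation is that $\calC_k$ is built from $\calE_k \times M$ by blowing up the coincidence sets $F_\calI^\sigma = \{\rho_\calI = 0,\ z = \sigma^\calI(\frakp)\}$. Over $\calU = \prod_j \calU_j$, the extra factor of $M$ — i.e., the location $z$ of the fiber point — can sit near at most one of the cluster centers $\zeta_i$ at a time (again using that the clusters are $\epsilon$-separated and $\epsilon$-balls around distinct $\zeta_i$ are disjoint), so we cover $\wh\beta^{-1}(\calU)$ by the $\ell$ open sets $\calV_i$ where $z$ is near $\zeta_i$ (plus, strictly, a set where $z$ is uniformly away from all cluster centers, which is trivially a product — I should mention this but it is subsumed into the $\calV_i$ by slightly enlarging them). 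In $\calV_i$, only the coincidence sets $F_\calK^\sigma$ with $\calK \subset \calI_i$ are relevant — the others have $z = \sigma^\calK(\frakp)$ located near a different $\zeta_j$, so they don't meet $\calV_i$ — and blowing these up affects only the $\calU_i \times M$ factor, exactly reproducing the construction of $\calC_{|\calI_i|}$ over $\calU_i$. Hence $\calV_i \cong \tilde\calV_i \times \prod_{j\neq i}\calU_j$ with $\tilde\calV_i \subset \calC_{|\calI_i|}$, and $\wh\beta$ restricted to $\calV_i$ is the product of $\wh\beta_{|\calI_i|}$ with the identity on the other factors.

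Parts (3) and (4) are the `fiberwise' analogue: instead of standing at a point $q\in\calE_k$ and decomposing a neighborhood of the base, we stand at a point $q\in\calC_k$ on the interior of the top hemisphere of its tower $(T,\calJ)$ and decompose a neighborhood of $q$ in the total space. Here I would use the description of $\frakC_{\max}$ from Proposition~3 together with the cluster picture: the top hemisphere of the tower, with its $|\calJ'|$ free marked points (the points of $\calJ$ not in any smaller cluster of $T$), is canonically a piece of $\calC_{|\calJ'|}$ — it is a hemisphere blown up at $|\calJ'|$ distinct points, which is exactly the local model for the central fiber of $\calC_{|\calJ'|}$ — while the remaining data (the lower part of the tower, and the positions of the cluster centers) is governed by $\calE_{k-|\calJ'|}$ after contracting each maximal sub-cluster $\calI\in T$, $\calI\subsetneq\calJ$, to a single point. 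When $\calJ'\neq\emptyset$ (part (3)) this gives the product $\calV_\calC\times\calV_\calE$ with $\calV_\calC\subset\calC_{|\calJ'|}$; when $\calJ'=\emptyset$ (part (4)) the top hemisphere carries no blown-up free points, so it contributes just a factor of $M$ (an unramified piece of the fiber), and all $k$ marked points live in the lower tower governed by $\calE_k$, giving $q\in\calV_\calC\times\calV_\calE$ with $\calV_\calC\subset M$. The main obstacle, I expect, is part (3)/(4): correctly matching the top hemisphere of the tower against the central fiber of $\calC_{|\calJ'|}$ (or against $M$), keeping track of which blow-ups have already been performed below versus which are being performed at the top level, and verifying that the two commute on the relevant open set — essentially checking that the coincidence sets from different levels of the tower are transverse or nested as needed, which follows from the second and third Lemmas in \S\ref{ss:Ck} but requires some care to apply in the iterated, localized setting. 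Parts (1)–(2) are routine once one commits to the cluster coordinates.
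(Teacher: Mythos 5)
Your parts (1) and (2) follow the same route as the paper (cluster separation, the observation that all blowups in $\calU_{\calI_*,\epsilon}$ occur within single clusters, and the covering of $\wh{\beta}^{-1}(\calU)$ by regions where $z$ is near at most one cluster), with some extra but harmless detail about transversality and commuting the blowups; your part (4) is also essentially the paper's argument.

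The gap is in part (3), where the identification you propose for the two factors is not the one that makes the statement true. First, the $\calE$-factor: you describe it as parametrizing the remaining data ``after contracting each maximal sub-cluster $\calI\in T$, $\calI\subsetneq\calJ$, to a single point.'' This conflicts with the factor demanded by the statement, namely $\calV_{\calE}\subset\calE_{k-|\calJ'|}$, the resolved configuration space of \emph{all} $k-|\calJ'|$ non-free points taken individually. If you really contract the sub-clusters, you get a configuration space of strictly fewer points, and the dimensions no longer add up to $\dim\calC_k=2k+2$ as soon as some sub-cluster has two or more points (which is exactly the case $\calJ'\subsetneq\calJ$); worse, contraction discards the internal scales, directions and deeper clustering of each sub-cluster, which are genuine local coordinates of $\calC_k$ near $q$ (they sit among the base directions). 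In the paper these data are not contracted away: they are encoded precisely by the boundary faces of $\calE_{k-|\calJ'|}$, which is why that space, with all non-free points kept separate, is the correct second factor. Second, the $\calC$-factor: matching the top hemisphere of the tower against ``the central fiber of $\calC_{|\calJ'|}$'' is not the right local model in general; for instance when $|\calJ'|=1$ — a perfectly relevant case — $\calE_1=M$ has no boundary and $\calC_1=[M\times M;\Delta]$ has no hemisphere face at all.

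The paper's own argument is more elementary and sidesteps the matching-of-levels/commutation issue that you flag as the main obstacle. Because $q$ lies in the interior of the maximal hemisphere, a sufficiently small neighborhood of $q$ meets no coincidence sets other than the singleton ones $F^\sigma_i$, $i\in\calJ'$: it avoids the attachment circles of the sub-hemispheres, the outer boundary, and all faces $\frakC_{\calI}$ with $|\calI|\geq 2$ apart from the face on which $q$ lies. One then simply regroups the variables: $(\frakp_{\calJ'},z)$ sweeps out a region of $\calC_{|\calJ'|}$ which avoids all partial diagonals, so that only the blowups of $\{z=p_i\}$, $i\in\calJ'$, are active in that factor, while $\frakp_{\calJ''}$, with $\calJ''=\{1,\dots,k\}\setminus\calJ'$, fills out the $\calE_{k-|\calJ'|}$ factor. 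Your proposal never makes the key ``which boundary faces can a small neighborhood of $q$ meet'' observation, and instead defers the content to a commutation of blowups between tower levels which, in the setup you describe (contracted base factor, central-fiber model for $\calC_{|\calJ'|}$), cannot be carried through.
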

\begin{proof}
For (1), the decomposition separates points into independent clusters.  If $q \in U_{\calI_{*}, \epsilon}$, then it has 
a neighborhood which does not intersect any $F_{\calJ}$ except when $F_{\calJ}, \calJ \subset \calI_{i}$ for some $i$. 
All the possible blowups in this neighborhood occur within each cluster, so one can write $q=(q_{1}, \dots, q_{\ell})$ 
where $q_{j} \in E_{\calI_{j}}$. The conclusion is now clear.

For (2), the clusters in $\calU_{*,\epsilon}$ are separated by distance at least $\epsilon$ so this decomposition exists. 
Restricting to each $\calV_{j}$, then only $p_{i}, i\in \calI_{j}$ are included, so locally the map $\calC_k \to \calE_k$
splits in such a way that the lift of projection $(\frakp, z)\to \frakp$ restricts to the lift of $\{(\frakp_{\calI_{j}},z) \to 
\frakp_{\calI_{j}}\} \times \frakp_{\{1,\dots, k\}\setminus \calI_{j}}$. 

It is possible to refine the decomposition in (2) further when moving deeper into the tree. To prove (3) and (4), 
take a point in $\calC_k$ lying above $(\frakp,z)$ and let $\frakC_{\calJ}$ be the boundary face, some point on 
the interior of which projects to $(\frakp,z)$. By the definition of $T$, the only boundary faces intersecting 
$\frakC_{\calJ}$ in this fiber correspond to the vertices $\calI \supset \calJ$. Since $z$ lies in the interior of 
a boundary face, it has a neighborhood which does not intersect any of these $\frakC_{\calI}$. 
If $\calJ'\neq \emptyset$, there are $|\calJ'|$ free points contained in this region, hence only $\frakC_{i}, i\in \calJ'$ 
intersect this neighborhood. There are no other free points in this neighborhood,  and it contains no other boundary faces. 
In the first case, write $\frakp=(\frakp_{\calJ'}, \frakp_{\calJ''})$ with $\calJ''=\{1,\dots, k\}\setminus \calJ'$. 
Only the coincidence set $F_{\calJ'}^\sigma$, i.e., where $z=p_{i}, i \in \calJ'$, intersects this neighborhood. 
Thus it is given by a neighborhood of $\calC_{|\calJ'|}$ which does not intersect any partial diagonals. The
other $\frakp_{\calJ''} \in \calE_{k-|\calJ'|}$ fill out the remaining base variables. For (4), there are no free points 
in this neighborhood, hence the fiber is just a neighborhood of $M$. Using the same reasoning as above
yields the product decomposition.
\end{proof}

%This lemma implies that the geometry of $\calC_{k}$ can be described as a product of lower dimensional spaces, except the boundary of boundary faces (corners) of which description is contained in the following result.

\medskip

\noindent {\bf $b$-fibrations:}
The final and crucial fact we need is the following result.
\begin{proposition} 
The natural projection $\wh{\beta}: \calC_k\rightarrow  \calE_k$  is a $b$-fibration.
\end{proposition}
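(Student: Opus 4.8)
The plan is to verify the three defining properties of a $b$-fibration (see the appendix): that $\wh\beta$ is a $b$-map, that it is $b$-normal, and that it is a $b$-submersion. Surjectivity and properness are automatic since $M$ is compact, so these three are all that is at issue.

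First I would dispose of the $b$-map and $b$-normality conditions, which are essentially already contained in the coordinate computations of \S\ref{ss:k2}--\S\ref{ss:bd}. The boundary hypersurfaces of $\calE_k$ are the $F_\calI$, and those of $\calC_k$ are the lifts $\wt F_\calI$ of $F_\calI\times M$, the hemispherical faces $\frakC_\calI$, and the singleton faces $\frakC_j$. Using the description of these faces from \S\ref{ss:bd}, $\wh\beta(\frakC_j)=\calE_k$, whereas $\wt F_\calI$ and $\frakC_\calI$ are exactly the boundary hypersurfaces of $\calC_k$ lying over $F_\calI$, so that $\wh\beta^{-1}(F_\calI)=\wt F_\calI\cup\frakC_\calI$. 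Hence a boundary defining function of $F_\calI$ pulls back to $\wh\beta^{*}\rho_{F_\calI}=a_\calI\,\rho_{\wt F_\calI}\rho_{\frakC_\calI}$ with $a_\calI$ smooth and strictly positive --- which is precisely the content of relations such as \eqref{bdf} and \eqref{e:3pt} and their higher-$k$ analogues. This exhibits $\wh\beta$ as a $b$-map, and it shows the associated exponent matrix has at most one nonzero entry in each row (the $\rho_{\frakC_j}$ occur in none of the pullbacks), i.e.\ no boundary hypersurface of $\calC_k$ is carried into a corner of $\calE_k$; that is $b$-normality.

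The heart of the matter is then surjectivity of the $b$-differential ${}^b\wh\beta_*$. Since this is local, I would localize using Lemma~\ref{l:tree}: near a point of $\calC_k$ not lying over $F_{\max}$, the cluster decomposition gives a neighborhood on which $\wh\beta$ is a finite product of maps $\wh\beta_{|\calI_j|}:\calC_{|\calI_j|}\to\calE_{|\calI_j|}$ with $|\calI_j|<k$, possibly together with a genuinely smooth fibration factor; since a product of $b$-submersions is a $b$-submersion, an induction on $k$ reduces the problem to proving that ${}^b\wh\beta_*$ is surjective at points of $\calC_k$ lying over $F_{\max}$. There I would work in the iterated polar coordinate systems of \S\ref{ss:k2}--\S\ref{ss:bd} and exhibit ${}^b\wh\beta_*$ as block triangular with an invertible ``base'' block: the total-space coordinates split into base variables --- cluster centers of mass $\zeta_\bullet$, the radial defining functions $R_\bullet$ of the faces $\frakC_\bullet$, and the angular variables $\theta_\bullet,\phi_\bullet$ --- and fiber variables --- the hemisphere colatitudes (equivalently the defining functions of the $\wt F_\bullet$) together with the coordinate $z$ on $M$ away from the blown-up loci; the map $\wh\beta$ carries each base variable to the like-named coordinate on $\calE_k$ up to the positive factors $a_\calI$ appearing in \eqref{bdf}--\eqref{e:3pt}, so ${}^b\wh\beta_*$ sends $R_\bullet\partial_{R_\bullet}$ to a positive multiple of $\rho_\calI\partial_{\rho_\calI}$ modulo fiber-tangent terms and sends $\partial_{\theta_\bullet},\partial_{\phi_\bullet},\partial_{\zeta_\bullet}$ to the corresponding generators, while the fiber variables are annihilated. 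Surjectivity follows at once. (An alternative, more structural route: each coincidence set $F_\calI^\sigma$ blown up in constructing $\calC_k$ meets the fibers of the relevant intermediate space over $F_\calI$ in a $p$-submanifold, so every blow-up is ``fiberwise'' and preserves the property of the base projection being a $b$-fibration; iterating gives the result.)

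The main obstacle I anticipate is purely combinatorial bookkeeping: one has to check the triangular form of ${}^b\wh\beta_*$ (or the fiberwise nature of each blow-up) at every corner type of $\frakC_{\max}$, and these corners are classified by the trees $T$ and nodes $\calI$ of the propositions in \S\ref{ss:bd}. My expectation is that nothing genuinely new occurs beyond the $k=2$ and $k=3$ computations: the product structure of \eqref{bdf}--\eqref{e:3pt} propagates one blow-up level at a time along the branches of $T$, and a careful induction along the tree closes the argument.
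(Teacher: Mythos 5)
Your proposal is correct and follows essentially the same route as the paper's proof: induction on $k$, localization near each corner via the cluster-decomposition Lemma~\ref{l:tree} into products of lower configuration families $\calC_{|\calI_j|}\to\calE_{|\calI_j|}$, and the explicit pullback relations for boundary defining functions as in \eqref{bdf} and its $k=3$ analogue, together with the observation that each $\frakC_\calI$ maps onto the hypersurface $F_\calI$ (so no hypersurface hits a corner). The only difference is one of completeness rather than method: you additionally spell out surjectivity of the $b$-differential in the iterated polar coordinates (the block-triangular form), a point the paper's argument leaves implicit after verifying the $b$-map and exponent-matrix conditions.
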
 
We review the definition of $b$-fibrations in the Appendix, and their central importance in the
theory of manifolds with corners.  In particular, this Proposition will be crucial in proving the fine
regularity results for the families of fiberwise constant curvature metrics later in this paper.
\begin{proof}
The proof is by induction on $k$. The result is obvious when $k=1$, since in that case $\calE_1 = M$ and
$\calC_1 = [M \times M;\Delta_{12}]$, and a map where the range is a manifold without boundary or corners
is trivially a $b$-fibration.

Note that when $k=2$, we have already written the explicit relationship between boundary defining functions, cf.\ \eqref{bdf}, 
which proves the result in that case as well.

Now suppose that the assertion is true for any $\ell < k$.   The first goal is to show that $\wh{\beta}$ is a $b$-map,
which means that the pullback of any boundary defining function $\rho_{\calI}$ for a face $F_\calI \subset \calE_k$ is 
a product of boundary defining functions in $\calC_k$ (up to a nonvanishing smooth factor). If $q \in \calC_k$
lies on the interior of some boundary face, then by (3) and (4) of the previous lemma, we can replace $\wh{\beta}$ by
the product of maps
\begin{equation}\label{e:prod}
(\wh{\beta}_{|\calJ'|}: \calC_{|\calJ'|} \rightarrow \calE_{|\calJ'|})\times \calE_{k-|\calJ'|}.
\end{equation}
When $|\calJ'|=k$, the tree $T$ associated to $(\frakp,z)$ has precisely one node, and only the face $F_{\max}$ contains 
$\wh{\beta}(q)$.  Furthermore, in this case, $q$ lies in the interior of the boundary face $\frakC_{\max}$,
so just as for the cases $k=2$ and $3$, we have 
$$
\wh{\beta}^*\rho_{\max}=AR_{\max} w,
$$
where $R_{\max}$ is the boundary defining function for $\frakC_{\max}$ and $w$ is a defining function for the the boundaries
of $M$ blown up at $k$ points. If $|\calJ'|<k$, then $\wh{\beta}_{|\calJ'|}$ in~\eqref{e:prod} is a $b$-fibration by the
inductive hypothesis, so the boundary defining functions $\rho_{\calI}$ with $\calI\subset \calJ'$ pull back to products 
of boundary defining functions in $\calC_{|\calJ'|}$ (up to a nonvanishing smooth function), while other boundary defining 
functions $\rho_{\calI}$ with $\calI \not\subset \calJ' $ pull back trivially.

On the other hand, suppose $q$ lies on the boundary of a boundary face, so there is an associated tree $T$ and node $\calJ$,
and let $\calI \supset \calJ$ be the node directly over $\calJ$. (If there is no node above $\calJ$, then in the following
we regard $R_{\calI}=w$ be a boundary defining function for the surface $M$ blown up at the appropriate number of
points.)  By (1) of the previous lemma, there is a local product decomposition such that the points in $\calI$ are 
separated from all others,  and locally $\wh{\beta}$ splits as 
$$
(\wh{\beta}_{|\calI|}: \calC_{|\calI|} \rightarrow \calE_{|\calI|})\times \calE_{k-|\calI|}.
$$ 
Furthermore, 
\begin{equation}
\wh{\beta}^{*}\rho_{\calJ}=AR_{\calJ}w_{\calI}, \ \wh{\beta}^{*}\rho_{\calJ'}=R_{\calJ'}, \ \ \calJ'\neq \calJ,
\end{equation}
where $R_{\calJ}$ is the boundary defining function of $\frakC_{\calJ}$ and $w_{\calI}$ is the boundary defining function of 
$\frakC_{\calI}$ in that corner.  All other boundary defining functions pull back trivially by virtue of this product decomposition.
%Now suppose $\frakp$ is near $F_{\calI}$ and has a cluster decomposition $\frakp \in M^{\ell_1}\times \dots \times M^{\ell_r}$, $\ell_j = |\calI_j|$, $j = 1, \ldots , r$. Hence $\frakp \in U_{\calI_{*}, \epsilon}$ for some $\epsilon$. Then locally there is a product decomposition
%$$
%\pi_1 \times \ldots \times \pi_r:  \bigoplus \calC_{\ell_i} \rightarrow \bigoplus \calE_{\ell_i}. 
%$$
%The boundary defining function $\rho_{\calI}$ is a product of boundary defining functions $\rho_{\calI_{i}}$ coming from 
%the blowup of the central diagonal in each lower dimensional subspace $M^{|\calI_{i}|}$. By induction, $\rho_{\calI_{i}}$ is 
%a product of boundary defining functions on $\calC_{|\calI_{i}|}$ which are also bdf's in the total space $\calC_{k}$. 
%Therefore the product of all these boundary defining functions is just $\rho_{\calI}$, which is what we need.

The remaining issue is to show that $\wh{\beta}$ does not map any one of the boundary hypersurfaces $\frakC_{\calI}$ 
of $\calC_k$ to corners of $\calE_k$.  This is clear from the construction since $\frakC_{\calI}$ maps to the boundary
hypersurface $F_{\calI}$. 

These facts together prove that $\wh{\beta}$ is a $b$-fibration.
\end{proof}

\section{Geometry of merging cone points}\label{s:geometry}

\subsection{Review of existing results}\label{ss:literature}
We study constant curvature metrics with conical singularities, which is defined by the following: a smooth metric on $M\setminus \frakp$ with constant curvature, and near the punctures the metric is asymptotically conical, that is, there exist local coordinates such that the metric is given by
$$
e^{\phi} |z|^{2(\beta-1)}|dz|^{2}
$$ 
with $\phi$ being smooth. There is also a geodesic coordinate description given by
\[
g=\left\{
\begin{array}{ll}
d\frakr^{2}+\beta^{2}\frakr^{2}d\phi^{2}, & K=0\\
d\frakr^{2}+\beta^{2}\sinh^{2} \frakr d\phi^{2}, & K=-1\\
d\frakr^{2}+\beta^{2}\sin^{2} \frakr d\phi^{2}, & K=1
\end{array}
\right.
\]
 In particular, in each case with curvature of different signs, asymptotically it is always given by the flat metric.

The central problem to study is: given $(\frakc, \frakp,\vec \beta, K, A)$ as the conical data, does there exist a constant curvature conical metrics of this data? Is the metric unique? 
The study of this singular uniformization problem has a long history and has been very active recently. When the curvature $K$ is nonpositive, the conclusion is relatively straight-forward. By the results of McOwen~\cite{Mc} and Luo--Tian~\cite{LT}, for any fixed $(\frakc, \frakp, \vec \beta, K, A)$ that satisfy the Gauss--Bonnet formula and $K\leq 0$, there exists a unique constant curvature conical metric prescribed by the tuple.  When $K>0$, the situation is complicated depending on the cone angles. When all the cone angles are smaller than $2\pi$, by the results of Troyanov~\cite{Tr} and Luo--Tian loc. cit., there is a unique constant curvature metric when the cone angles are in the ``Troyanov region''~\eqref{e:Troyanov}. And when all the cone angles are less than $2\pi$, there is a moduli space structure, obtained by the first author and Weiss~\cite{MW}.

When some of the cone angles are bigger than $2\pi$,  unlike the cases above, there is no uniform result.  When $M$ is a sphere, there are some results depending on the number of cone points. When $k=2$,  Troyanov~\cite{Tr2} gave the results. When $k=3$, the characterization via complex analytical methods was given by Eremenko~\cite{Ere2} and Umehara--Yamada~\cite{UY}. When $k=4$ with symmetry, complex analysis techniques can also be applied, see Eremenko--Gabrielov--Tarasov~\cite{EGT}. When the genus of $M$ is greater  than 0, there are some general existing results by Carlotto--Malchiodi~\cite{CM1, CM2}, Bartolucci--De Marchis--Malchiodi~\cite{BMM}. In~\cite{SCLX} some new families of metrics are constructed by relating to Strebel differentials.
We also mention here some related problems, including Toda systems and mean field equations, see Chen--Lin~\cite{CL1, CL2} and references therein. In particular the recent result by Lee--Lin--Yang--Zhang~\cite{LLYZ} considering the singularity formation of two-points collision is morally related to the cone points merging behavior studied in this paper.

Recently, the breakthrough of Mondello--Panov~\cite{MP} shows the necessary condition of existence on a sphere by the following holonomy condition:
\begin{equation}\label{e:MP}
d_{1}(\vec \beta-\vec 1, \ZZ_{odd})\geq 1.
\end{equation}
They also showed that when the strict inequality holds (``non-coaxial'' situation) there exists at least one such metric. 
The recent results by Dey~\cite{Dey}, Kapovich~\cite{Ka}, and Eremenko~\cite{Ere1} determined the necessary condition of existence when the equality holds in~\eqref{e:MP}.

When trying to extend the result of~\cite{MW} to get a smooth manifold structure in this case, we found that there are obstructions, reflected in the fact that the linearized operator fails to be surjective on some subvarieties.
A key component in the construction of Mondello and Panov is the splitting of cone points, which turns out to be the key to resolve the analytic obstruction. We are going to describe the geometry of this process (splitting, or equivalently, 
merging of cone points) below.

\subsection{Local geometry of merging cone points}\label{ss:local}
In the next three sections, we consider the behavior of constant curvature metrics with some of the cone points merging together (or equivalently, when a cone splits into several cones). The cases we are going to study in this paper include all hyperbolic and flat metrics, and spherical metrics with angles less than $2\pi$. 

We first describe this process locally by looking at the following family of metrics parametrized by $t\in [0,\epsilon)$
\begin{equation}
g(t)=|z-p_{1}(t)|^{2(\beta_{1}-1)}|z-p_{2}(t)|^{2(\beta_{2}-1)}|dz|^{2}.
\end{equation}
where $p_{1}(t)$ and $p_{2}(t)$ are smooth coordinates that parametrize two moving points on $M$. We also require $p_{1}(0)=p_{2}(0)$. 
When $t\neq 0$, it gives a metric with two separate cone points with angles $2\pi\beta_{1}$ and $2\pi\beta_{2}$. And the distance between the two cone points decreases as $t$ goes to 0. Eventually in the limit $t=0$, the metric is given by 
$$
|z|^{2(\beta_{1}+\beta_{2}-2)}|dz|^{2}
$$
which is a conic metric with a single cone angle $2\pi(\beta_{1}+\beta_{2}-1)$ if $\beta_{1}+\beta_{2}-1>0$. 

This process of merging two cone points can be generalized to multiple points. After merging $j$ points each with angle $\{2\pi \beta_{i}\}_{i=1}^{j}$, the angle we get is given by the following defect formula:
\begin{equation}\label{e:beta0}
2\pi\beta_{0}:=2\pi \left (\sum_{i=1}^{j} \beta_{i}- (j-1)\right).
\end{equation}

One thing to notice from the defect formula is that not all conic points can be merged; it only happens when the ``admissible condition'' below is satisfied,
otherwise there is an obstruction to produce a new conic points.
When $\beta<0$, $|z|^{2(\beta-1)}|dz|^{2}$ is no longer conical, and we get some open ends, the form of which depend on the curvature constant. Therefore, we define the following condition for merging:
\begin{definition}\label{d:AdmAn}
We say a set of cone angles $\{\beta_{i}\}_{i\in \calI}$ is \emph{admissible} 
if
\begin{equation}\label{e:beta}
\sum_{i\in \calI}\beta_{i}>|\calI|-1.
\end{equation}
\end{definition}
In particular, this implies that when $k=2$, the two cone points need to satisfy 
$
\beta_{1}+\beta_{2}>1.
$

\subsection{Global geometry}\label{ss:glb}
The metrics we are going to consider in this paper are the following:
\begin{equation}\label{e:AdmAn}
\begin{array}{l}
\cdot \text{Flat or hyperbolic conical metrics with }\vec\beta \in \RR_{+}^{k}
 \text{ such that the }\\ \text {Gauss--Bonnet formula~\eqref{e:gb} is satisfied; or }\\
\cdot \text{Spherical conical metrics with } \vec\beta \in (0,1)^{k} \text{ satisfying~\eqref{e:Troyanov} if }k\geq 3 \\ \text{ or genus }(M)>0 \text{; or }\beta_{1}=\beta_{2} \text{ if }k=2 \text{ and } M=\bS^{2}.
\end{array}
\end{equation}

From~\cite{Tr, LT, Mc}, there is a unique constant curvature metric for each of the configuration $(\frakp, \vec \beta)$ with $\vec \beta$ satisfying one of the conditions above. Now considering the family of metrics with varying cone points, 
We would like to understand the uniform behavior of those metrics when some of cone points merge together.  
Using the defect formula we can see that
$$
\beta_{0}-1=\sum_{i=1}^{j} (\beta_{i}-1).
$$
Together with the Gauss--Bonnet formula, this implies that the curvature remains the same constant in this merging process.

Because of the restriction of cone angles defined above, not all cones can be merged to produce new cones. Moreover, the Troyanov constraint for spherical metrics is not preserved in this merging process. Therefore, for a given set of cone angles $\vec\beta$, the fiber conical metrics might potentially only be defined in a subset of $\calC_{k}$. And we define the following admissible region.
\begin{definition}
For fixed $\vec\beta=\{\beta_{i}\}_{i=1}^{k}$ satisfying one of the conditions in~\eqref{e:AdmAn}, the admissible extended configuration space $\calE_{k,\vec \beta}$ is 
defined to be the union of configurations in $\calE_{k}$ with which there exists a constant conical curvature metric on $M$, i.e. 
\begin{multline}
\calE_{k,\vec \beta} \ :=\calE_{k}^{0} \ \bigcup \bigcup_{\sum_{i\in \calI_{j}}\beta_{i}>|\calI_{j}|-1, \forall j}\big\{\frakp \in \cap_{j=1}^{l}F_{\calI_{j}}: \ \ 
\text{ there exists a constant } \\ \text{curvature metric with configuration }\ \big(\frakp, \{\sum_{i\in \calI_{j}} (\beta_{i}-1)+1\}_{j=1}^{l}\big) \big\}.  
\end{multline}
\end{definition}
The admissible extended configuration family $\calC_{k,\beta}$ is defined in a similar way by only considering the admissible combinations, or equivalently $\calC_{k}\supset \calC_{k,\vec \beta}=\pi_{k}^{-1} (\calE_{k,\vec \beta}).$

\medskip

\noindent \textbf{Flat and hyperbolic cases}

The metrics we consider will be any $\vec\beta \in \RR_{+}^{k}$ such that the Gauss-Bonnet formula~\eqref{e:gb} is satisfied. In particular, while the Gauss-Bonnet constraint
$$
\sum_{i=1}^{k}(\beta_{i}-1)\leq \chi(M)
$$ 
gives an upper bound of $\beta_{i}$, 
we do not require the cone angles to be uniformly small.

The admissible extended configuration space is relatively easy in this case. As long as the merging cone angles are admissible in the sense of definition~\ref{d:AdmAn}, there exists a flat (or hyperbolic, depending on the angle combination) conical metric after merging. In particular, in this case we have
$$
\calE_{k, \vec\beta}=\calE_{k}^{0} \bigcup \bigcup_{\sum_{i\in \calI}\beta_{i}>|\calI|-1}F_{\calI},
$$
and
$$
\calC_{k, \vec\beta}=\calC_{k}^{0} \bigcup \bigcup_{\sum_{i\in \calI}\beta_{i}>|\calI|-1}\frakC_{\calI},
$$

When $k=2$ and $M=\bS^{2}$, there is neither flat nor hyperbolic conical metrics on $M$. When $k\geq 3$ or the genus of $M$ is greater than 0, $\calC_{k, \vec\beta}\setminus \calC_{k}^{0}$ is nonempty in general. In particular, when the genus of $M$ and the cone angles are all sufficiently large, all directions of merging will be allowed and in that situation $\calC_{k, \vec\beta}=\calC_{k}$. 

\medskip

\noindent\textbf{The spherical case} 

In this case all the cone angles are assumed to be less than $2\pi$, hence $\vec\beta \in (0,1)^{k}$. Notice that by the relation~\eqref{e:beta0}, the cone angle obtained after merging, denoted by $2\pi\beta_{0}$, is also less than $2\pi$. Therefore during this merging process we stay inside the class defined in~\eqref{e:AdmAn}.

The extra rigidity in the spherical case of~\eqref{e:AdmAn} and the fact that merging does not preserve this constraints make  $\calE_{k, \vec\beta}$ and $\calC_{k, \vec\beta}$ much more complicated to describe, compared to the previous case. We illustrate a few cases here, keeping in mind that the football case ($M=\bS^{2}, k=2, \beta_{1}=\beta_{2}$) is special for the reason that will be made clear in Section~\S\ref{s:sph}. 

We start with $M=\bS^{2}$. From~\cite{Tr2}, there is no ``tear-drop'' metric on $\bS^{2}$ with only one conical point. Hence there is no admissible merging on $\calC_{2}$,  which implies $\calC_{2,\vec\beta}=\calC_{2}^{0}$.
On a sphere with 3 conical points and assuming $0<\beta_{1}\leq \beta_{2}\leq \beta_{3}<1$, the Troyanov condition is given by 
\begin{equation}\label{e:3pt}
2\beta_{1}+1>\sum_{i=1}^{3}\beta_{i}, \text{ or } \beta_{1}>\beta_{2}+\beta_{3}-1.
\end{equation}
We show that it cannot merge to a football. Since all cone angles are less than $2\pi$, the merging process decreases the angle strictly, i.e. $\beta_{i}+\beta_{j}-1<\min\{\beta_{i}, \beta_{j}\}$. Hence the only feasible way to achieve a football would be merging the two bigger angles, and this gives
$$
\beta_{2}+\beta_{3}-1=\beta_{1}
$$
which contradicts~\eqref{e:3pt}. Since the three points cannot be simultaneously merged into one point either, we have $\calC_{3,\vec\beta}=\calC_{3}^{0}$.

When $k\geq 4$, depending on different $\vec \beta$, the situation can be very different. Assuming $0<\beta_{1}\leq \beta_{2}\leq \beta_{3}\leq \beta_{4}<1$ satisfying the Troyanov condition
$$
2\beta_{1}+2>\sum_{i=1}^{4}\beta_{i},
$$ 
if $1<\beta_{1}+\beta_{4}=\beta_{2}+\beta_{3}$ (which can be achieved for example by taking $\vec \beta=(\frac{1}{2}, \frac{2}{3},\frac{2}{3},\frac{5}{6})$ ), then by merging the two groups $\{\beta_{1},\beta_{4}\}$ and $\{\beta_{2}, \beta_{3}\}$ simultaneously, we get a football; 
however since we cannot split a football to get an admissible 3-point configuration, one cannot merge $\{\beta_{1},\beta_{4}\}$ nor $\{\beta_{2},\beta_{3}\}$ without the other group. 
That is, $\frakC_{14}\cap \frakC_{23}\subset \calC_{4,\vec\beta}$ but $\frakC_{14}^{0}\cup \frakC_{23}^{0} \not\subset \calC_{4,\vec\beta}$. 
However, it is possible to merge $\{\beta_{2},\beta_{4}\}$ or $\{\beta_{3},\beta_{4}\}$ in most of those cases (as in the example given above), hence $\frakC_{24}^{0}\cup \frakC_{34}^{0} \subset \calC_{4,\vec \beta}$. 
In contrast, if $\beta_{1}+\beta_{4}\neq\beta_{2}+\beta_{3}$ then one cannot get a football but merging into a 3-point configuration is still possible. 

When the genus of $M$ is greater than 1, the description of $\calC_{k,\vec \beta}$ still depends on $\vec \beta$ and the Troyanov condition, however since we will not get any football configuration in this case, it is analytically same to the flat or hyperbolic cases.

\section{Preliminary analysis}\label{s:analysis}
Our approach to the study of the geometric problems described in the last section involves the analysis of conic elliptic
operators on spaces with isolated conic singularities, as employed already in \cite{MW}. The new feature here is that
we study families of such operators on spaces with coalescing conic singularities. 

\subsection{$b$-vector fields on $M$ and conic elliptic operators}\label{ss:bvector}
Let $M$ be a manifold with isolated conic singularities at the collection of points $\frakp = \{p_1, \ldots , p_k\}$, and denote
by $\wh{M} = [M; \frakp]$ the blowup of $M$ at these points. Thus $\wh{M}$ is a manifold with $k$ boundary components;
when $\dim M = 2$, each boundary component is diffeomorphic to a circle. Choose local polar coordinates $(r,\theta)$
near any $p_j$, so $r$ is a boundary defining function for the boundary face created by blowing up $p_j$ and 
$\theta$ is a set of local coordinates on that face, e.g.\ $\theta$ is the angular coordinate if $\del \wh{M}$ is a union of circles. 
We recall the space of $b$-vector fields, which consists of all smooth vector fields on $\wh{M}$ which are tangent 
to the boundary. In these local coordinates, 
\[
\calV_b = \mbox{$\calC^\infty$-span}\, \{ r\del_r, \del_\theta\}
\]

A differential operator is called a $b$-operator if it can be written locally as a finite sum of products of elements of $\calV_b$, 
\[
L = \sum_{j + |\alpha| \leq m}  a_{j \alpha}(r,\theta) (r\del_r)^j \del_\theta^\alpha
\]
(here we continue to think of $\theta$ as possibly multi-dimensional and $\alpha$ a multi-index).  This operator
is called $b$-elliptic if its $b$-symbol is nonvanishing, 
\[
{}^b\sigma_m(L) = \sum_{j+|\alpha| = m} a_{j \alpha}(r,\theta) \rho^j \eta^\alpha \neq 0\ \ \mbox{for}\ \ (\rho,\eta) \neq (0,0).
\]
Finally, we say that $L$ is an elliptic conic operator if $L = r^{-m}A$ where $A$ is an elliptic $b$-operator of order $m$.
We are primarily concerned with elliptic conic operators of order $2$, in particular the Laplacian on a surface
with isolated conic singularities. 

Now suppose that $X$ is a more general manifold with corners, i.e., any point $q \in X$ has a neighborhood
$\calU$ diffeomorphic to a neighborhood of the origin in an orthant $\RR^k_+ \times \RR^{n-k}$ (in which
case $q$ lies on a codimension $k$ corner). As before we define the space of $b$-vector fields on $X$ to
consist of the smooth vector fields which are tangent to all boundary faces. In local coordinates $(x_1, \ldots, x_k, y_1, \ldots, y_{n-k})$
near a codimension $k$ corner, with all $x_j \geq 0$, we have that
\[
\calV_b(X) = \mbox{$\calC^\infty$-span}\, \{ x_j \del_{x_j},  \del_{y_\ell}\}. 
\]

Our main examples of manifolds with corners here, of course, are the extended configuration spaces $\calE_k$ and
the extended configuration families $\calC_k$.  We do not consider here $b$-differential operators on general
manifolds with corners. Instead, as motivated by our problem, the fibers $\pi^{-1}(q) \subset \calC_k$ are 
unions of two dimensional surfaces with boundary, possibly `tied' along their boundaries, and we consider 
the families of elliptic conic operators on these fibers, parametrized by $q \in \calE_k$.    Nonetheless, it
still turns out to be important to consider $b$-vector fields on the entire space $\calC_k$.  In doing so, it is 
convenient to organize the boundary hypersurfaces into three types, corresponding to the boundary faces $\frakC_{\calI}$
which resolve point coincidences, the faces $\frakC_i$ corresponding to individual conic points, and the faces corresponding 
to the blowups of each fiber $M_{\frakp}$. We denote the boundary defining functions for these by $\rho_{\calI}$, $\rho_i$ and $\rho$, 
respectively. 

\subsection{$b$-H\"older spaces on $M$ and mapping properties of conic elliptic operators}\label{ss:holder}
Conic elliptic operators act naturally between weighted Sobolev and H\"older spaces defined relative to differentiations
by the vector fields in $\calV_b(M)$. Our ultimate problem is nonlinear, so we define only the $b$-H\"older spaces
and state the key mapping properties on these.  For simplicity, we restrict attention to the $2$-dimensional case.

\begin{definition}[Weighted $b$-H\"older spaces]
For any function defined on $M_{\frakp}$, define the seminorm $[u]_{b; \delta}$ in the usual way away
from a neighborhood of the boundary faces, while in each such neighborhood, in local polar coordinates, we set
$$
[u]_{b; \delta} := \sup_{0< \bar{r} <r_0} \sup_{ \bar{r} \leq r,r' \leq 2\bar{r}} \frac{|u(r,\theta)-u(r',\theta')|\bar{r}^{\,\delta}}{|(r,\theta)-(r',\theta')|^{\delta}}
$$ 
Then $\calC_{b}^{0,\delta}(M)$ consists of the functions $u$ which are bounded and for which $[u]_{b; \delta} < \infty$. 

Next, for any $\ell \in \mathbb N$, define $\calC^{\ell,\delta}_b(M)$ to consist of all functions $u$ such that 
$V_1 \ldots V_j u \in \calC^{0,\delta}_b(M)$ for every $j \leq \ell$ and $V_i \in \calV_b(M)$.   

Finally, $r^\mu \calC^{k,\delta}_b(M) = \{u = r^\mu v:  v \in \calC^{\ell,\delta}_b(M)\}$. 
\end{definition}

It is immediate from these definitions that if $L$ is a second order conic elliptic operator, then for every $\ell \geq 2$, 
\begin{equation}
L:  r^{\mu} \calC^{\ell,\delta}_b(M) \longrightarrow r^{\mu-2} \calC^{\ell-2,\delta}_b(M)
\label{conicmap}
\end{equation}
is bounded.  

It can happen that this map does not have closed range for certain values of $\mu$.  Indeed, $\mu$ is called
an indicial root of $L$ if there exists some function $\phi(\theta)$ such that $L( r^\mu \phi(\theta)) = \calO( r^{\mu-1})$. 
The expected order of decay or blow-up is $r^{\mu-2}$, so $\mu$ is an indicial root only if there is some leading order cancellation. 
It is not hard to check that if $\mu$ is an indicial root, then an appropriate sequence of cutoffs of $r^\mu \phi(\theta)$ can
be constructed to show that \eqref{conicmap} does not have closed range. This is explained at length in \cite{MW}. 
The following is the basic Fredholm result,  proved in \cite{M-edge} but see also \cite{MW}, 
\begin{proposition}
If $\mu$ is not an indicial root, then \eqref{conicmap} is Fredholm. 
\end{proposition}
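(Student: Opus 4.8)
The plan is to produce a two-sided parametrix for $L$ modulo compact operators on the weighted $b$-H\"older spaces, splitting the construction between the interior of $M_\frakp$ and a neighborhood of the conic points. Away from the punctures $L$ is an ordinary second order elliptic operator, so I would take any classical interior parametrix $G_{\mathrm{int}}$ (from properly supported pseudodifferential theory, or directly from interior Schauder estimates) inverting $L$ up to a compactly supported smoothing error. The analytic content is therefore concentrated near $r=0$, and since the conic points are isolated and mutually disjoint it suffices to treat one of them.

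Near a conic point, write $L=r^{-2}A$ with $A$ an elliptic $b$-operator of order two, and pass to $t=-\log r\in(T_0,\infty)$, under which $A$ becomes a uniformly elliptic operator on the half-cylinder $(T_0,\infty)\times\bS^1$ whose coefficients converge exponentially, as $t\to\infty$, to those of the translation-invariant normal operator $N(A)=\sum a_{j\alpha}(0,\theta)(r\del_r)^j\del_\theta^\alpha$. Mellin transform in $r$ conjugates $N(A)$ to the indicial family $I(L,\zeta)$, a holomorphic family of second order elliptic operators on the link $\bS^1$; the hypothesis that $\mu$ is not an indicial root of $L$ says precisely that $I(L,\zeta)$ is invertible for every $\zeta$ on the relevant horizontal line $\operatorname{Im}\zeta=-\mu$ (up to the usual normalization). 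I would then define the model inverse $G_0$ for $r^{-2}N(A):r^\mu\calC^{\ell,\delta}_b\to r^{\mu-2}\calC^{\ell-2,\delta}_b$ by integrating $I(L,\zeta)^{-1}$ along that line, and verify its boundedness on the $b$-H\"older scale.

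With $G_0$ available, I would patch: pick cutoffs $\chi\prec\chi'$ supported near $r=0$ and set $G=\chi\,G_0\,\chi'+(1-\chi')\,G_{\mathrm{int}}\,(1-\chi)$, obtaining $LG=I-R$ and $GL=I-R'$. On the conic piece $R,R'$ involve $L-r^{-2}N(A)$ composed with $G_0$, hence gain a power $r^\epsilon$ (from the exponential decay of the coefficient difference in the $t$-picture) on top of the smoothing contributed by the interior term. A gain of $r^\epsilon$ together with a gain of one $b$-derivative makes $R,R'$ compact on $r^\mu\calC^{\ell,\delta}_b(M)$: bounded sets of $r^{\mu+\epsilon}\calC^{\ell,\delta}_b$ are precompact in $r^\mu\calC^{\ell-1,\delta'}_b$ for $\delta'<\delta$ by an Arzel\`a--Ascoli argument on the cylinder, the weight gain killing mass at infinity. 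Existence of a two-sided parametrix modulo compacts gives at once that \eqref{conicmap} is Fredholm.

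The step I expect to be the main obstacle is the boundedness of $G_0$ on the $b$-H\"older scale, rather than on $L^2$ or the $b$-Sobolev scale where the contour-integral argument is cleanest. This requires estimating the Mellin multiplier $I(L,\zeta)^{-1}$ uniformly along the line $\operatorname{Im}\zeta=-\mu$, in particular controlling $\|I(L,\zeta)^{-1}\|$ as $|\operatorname{Re}\zeta|\to\infty$ via elliptic estimates with large parameter, and then converting this into Schauder-type bounds on $(T_0,\infty)\times\bS^1$ through a dyadic decomposition matching the definition of the seminorm $[\cdot]_{b;\delta}$. This is exactly the technical heart of the argument in \cite{M-edge}; once it is in place, the interior Schauder theory and the compactness of the errors are routine, and the uniformity in the coefficients needed later for families of such operators over $\calE_k$ follows by tracking constants through this construction.
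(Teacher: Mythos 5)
Your proposal is correct in outline and follows essentially the route the paper itself relies on: the paper gives no independent proof, deferring to \cite{M-edge}, and the argument there is exactly this normal-operator parametrix scheme --- invert the indicial family along the critical Mellin line, patch with an interior elliptic parametrix, and use the gain of a power of $r$ together with a gain of regularity to make the remainders compact on the weighted b-H\"older scale --- with the boundedness of the model inverse that you flag as the main obstacle handled in \cite{M-edge} by Schwartz-kernel estimates for the b-pseudodifferential parametrix rather than by direct Mellin-multiplier bounds. The only phrase to tighten is that ``$\mu$ is not an indicial root'' must be read as ``no indicial root has real part $\mu$,'' so that $I(L,\zeta)$ is invertible along the whole line $\operatorname{Re}\zeta=\mu$; for the operators considered here the indicial roots $j/\beta$ are real, so the two readings coincide.
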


If $L = \Delta + V$ where $V \in \calC^{0,\delta}_b$ for example (or even just $V \in r^{-2 + \epsilon} \calC^{0,\delta}_b$ for any 
$\epsilon > 0$), then at a conic point $p$ with cone angle $2\pi \beta$, the indicial roots consist of the set of values 
$j/\beta$, $j \in \mathbb Z$. The coefficient $\phi(\theta)$ corresponding to the indicial root $j/\beta$ can be any 
linear combination of $\sin j \theta$ and $\cos j \theta$. 

We also consider $L = \Delta + V$,  when $V \in \calC^{\ell,\delta}_b$, as an unbounded operator 
\begin{equation}
L: \calC^{\ell+2,\delta}_b(M) \longrightarrow \calC^{\ell,\delta}_b(M).
\label{samespace}
\end{equation}
We then seek to characterize its domain, i.e., the (nonclosed) subspace 
\[
\calD^{\ell,\delta}_b(L) = \{u \in \calC^{\ell+2,\delta}_b(M):  Lu \in \calC^{\ell,\delta}_b(M)\}.
\]
This is called the H\"older Friedrichs domain for $L$.  
\begin{proposition}[\cite{MW}] 
The space $\calD^{\ell,\delta}_b(L)$ consists of functions $u \in \calC^{\ell+2,\delta}_b(M)$ such that near 
each conic point $p$, 
\[
u = a_0 + \sum_{j=1}^{N(\beta)} (a_{j1} \cos j\theta + a_{j2} \sin j\theta) r^{j/\beta} + \wt{u},
\]
for some constants $a_{j1}$, $a_{j2}$, where $N(\beta)$ is the largest value $N$ such that $N/\beta < 2$, 
and $\wt{u} \in r^2 \calC^{\ell+2,\delta}_b(M)$.
\end{proposition}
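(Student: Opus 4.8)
The plan is to reduce the statement, near each conic point, to the standard inversion theory for the model indicial operator, and then to read off both inclusions from it. Fix a conic point $p$ with cone angle $2\pi\beta$ and work in the holomorphic coordinate $z$ in which the metric is $e^{\phi}|z|^{2(\beta-1)}|dz|^{2}$ with $\phi$ smooth. Writing $s=|z|$, $\theta=\arg z$, and $r=s^{\beta}$ (a boundary defining function for the face over $p$, in which a function $s^{j}$ has exponent $j/\beta$), I would begin by recording the exact identity, a consequence of the conformal invariance of the Laplacian in dimension two together with the flat--cone computation,
\[
r^{2}\Delta=\beta^{2}e^{-\phi}\,I_{0},\qquad I_{0}:=(r\del_{r})^{2}+\beta^{-2}\del_{\theta}^{2}.
\]
Hence, near $p$, the equation $Lu=f$ is equivalent to the model equation $I_{0}u=h$ with $h:=\beta^{-2}r^{2}e^{\phi}(f-Vu)$. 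Since $u\in\calC^{\ell+2,\delta}_{b}$, $f,V\in\calC^{\ell,\delta}_{b}$, the spaces $\calC^{\ell,\delta}_{b}$ are algebras, and $e^{\phi}$ is smooth, the right-hand side lies in $r^{2}\calC^{\ell,\delta}_{b}$; the explicit gain of the factor $r^{2}$ is the crucial point.

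Next I would invoke the expansion and inversion theorem for $I_{0}$ on weighted $b$-H\"older spaces proved in \cite{M-edge} (see also \cite{MW}). The indicial roots of $I_{0}$ are $\{\pm j/\beta:j\in\NN\cup\{0\}\}$, the root $0$ being carried by the constants and the root $j/\beta$ (for $j\geq1$) by $\mathrm{span}\{\cos j\theta,\sin j\theta\}$; those lying in $[0,2)$ are exactly $j/\beta$ for $0\leq j\leq N(\beta)$. Combined with the a priori bound $u=\calO(1)$ and $I_{0}u=h\in r^{2}\calC^{\ell,\delta}_{b}$, this theorem yields, near $p$,
\[
u=a_{0}+\sum_{j=1}^{N(\beta)}(a_{j1}\cos j\theta+a_{j2}\sin j\theta)\,r^{j/\beta}+\wt u,\qquad \wt u\in r^{2}\calC^{\ell+2,\delta}_{b},
\]
the displayed sum being precisely the span of indicial solutions with exponent strictly below the weight $2$. (In the borderline case $2\beta\in\NN$ the weight $2$ is itself an indicial root; any $r^{2}$-order polyhomogeneous term is absorbed into $\wt u$, and the possible resonant $r^{2}\log r$ term is handled as in \cite{MW}, so the conclusion is unaffected.) Since $u$ is already globally in $\calC^{\ell+2,\delta}_{b}$ and the conic points are isolated, running this at each $p_{i}$ shows that every element of $\calD^{\ell,\delta}_{b}(L)$ has the stated form.

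For the converse inclusion I would argue directly: assume $u\in\calC^{\ell+2,\delta}_{b}(M)$ has the displayed form near each conic point, and show $Lu\in\calC^{\ell,\delta}_{b}$, which away from the conic points is classical interior regularity. Near $p$, the constant $a_{0}$ satisfies $\Delta a_{0}=0$; the functions $r^{j/\beta}\cos j\theta=\mathrm{Re}(z^{j})$ and $r^{j/\beta}\sin j\theta=\mathrm{Im}(z^{j})$ are Euclidean harmonic, hence $\Delta$-harmonic by conformal invariance, so $\Delta$ annihilates the whole finite sum; and writing $\wt u=r^{2}v$ with $v\in\calC^{\ell+2,\delta}_{b}$, the identity for $r^{2}\Delta$ gives $\Delta\wt u=\beta^{2}e^{-\phi}\big[(r\del_{r}+2)^{2}v+\beta^{-2}\del_{\theta}^{2}v\big]\in\calC^{\ell,\delta}_{b}$, since this involves at most two $b$-derivatives of $v$. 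Therefore $\Delta u\in\calC^{\ell,\delta}_{b}$, and $Vu\in\calC^{\ell,\delta}_{b}$ as a product of $\calC^{\ell,\delta}_{b}$ functions, so $Lu\in\calC^{\ell,\delta}_{b}$ and $u\in\calD^{\ell,\delta}_{b}(L)$.

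The one genuinely hard ingredient is the inversion theorem for $I_{0}$ — upgrading the a priori boundedness of $u$ to the precise polyhomogeneous expansion, which is carried out by analyzing the ordinary differential equation obtained in each Fourier mode in $\theta$, with estimates uniform in the mode, together with the handling of the (non)resonance at the weight. But this is exactly the content of \cite{M-edge} and \cite{MW}; the content specific to our situation is only the clean reduction to $I_{0}$ via the two-dimensional conformal identity for $r^{2}\Delta$ and the bookkeeping of the indicial data, so I expect no further obstacle.
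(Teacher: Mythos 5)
Your proof is correct and is essentially the argument of the cited source: the paper states this proposition without proof, as a quotation of \cite{MW}, and the proof there proceeds exactly as you do, reducing near each cone point to the model operator $I_0$ via the conformal identity for $r^{2}\Delta$, invoking the weighted $b$-H\"older inversion and partial-expansion theory of \cite{M-edge}, and checking the converse inclusion by the direct computation with harmonic polynomials and the shifted operator acting on $r^{2}v$. The one inaccuracy is your parenthetical on the borderline case $2\beta\in\NN$: there the resonant function $\chi(r)\,r^{2}\log r\,\bigl(a\cos 2\beta\theta+b\sin 2\beta\theta\bigr)$ lies in $\calC^{\ell+2,\delta}_b$ and is mapped by $L$ into $\calC^{\ell,\delta}_b$, hence belongs to $\calD^{\ell,\delta}_b(L)$, yet it cannot be written as a constant plus indicial terms plus an element of $r^{2}\calC^{\ell+2,\delta}_b$; so the displayed characterization really requires $2\beta\notin\NN$ (the same kind of nonresonance assumption the paper imposes explicitly in \S\ref{ss:hyp2}), rather than the conclusion being ``unaffected.''
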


\subsection{Families of conic elliptic operators}\label{ss:conicop}
The previous subsection reviews a few standard results about conic elliptic operators on surfaces.
Our interest is in families of such operators, particularly as the conic points coalesce.  In particular,
suppose $L_\frakp$ is such a family where the cone points are located at some simple divisor $\frakp \in \calD^s_k$. 
A key difficulty in extending the theory for individual operators to families is that the function space on 
which $L_\frakp$ acts vary with $\frakp$.  We use the geometric machinery developed above to handle this.

More specifically, we first consider weighted H\"older spaces on $\calC_k$ and the restrictions of these spaces to the fibers
$\pi^{-1}(q)$, $q \in \calE_k$, then define the appropriate families of weighted H\"older spaces on which we 
may describe extensions of the mapping properties. 

In the following, let $\vec{\nu}$ be a weight vector, with components indexed by the hypersurfaces of $\calC_k$.  We then
define in the obvious way the weighted H\"older spaces $r^{\vec{\nu}} \calC^{\ell,\delta}_b(\calC_k)$.  To make sense of the
restrictions of these spaces to each fiber, we need an easy result.
\begin{lemma}
The restriction of $r^{\vec{\nu}}\calC_{b}^{\ell,\delta}(\calC_{k})$ to each fiber $M_{\frakp}=\pi^{-1}(\frakp)$ is precisely the
weighted space $r^{\vec{\nu}} \calC_{b}^{l,\delta}(M_{\frakp})$, where (abusing notation slightly), the weight vector $\vec{\nu}$ 
here has components indexed by the boundary components of $M_\frakp$. 
\end{lemma}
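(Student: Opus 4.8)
The plan is to reduce the assertion to a genuine local product near the fiber, and then verify the two inclusions separately. Since $\frakp$ is a simple divisor it lies in the interior $\calE_k^\reg$, and by construction $\pi$ is an \emph{honest} fibration over a neighborhood of $\frakp$; shrinking, I would fix a neighborhood $U$ of $\frakp$ diffeomorphic to a Euclidean ball together with a trivialization $\pi^{-1}(U)\cong M_\frakp\times U$ compatible with $\pi$ and with the $b$-structure. The first thing to record in this model is which boundary hypersurfaces of $\calC_k$ are relevant: only the faces $\frakC_i$, $i=1,\dots,k$, meet $\pi^{-1}(U)$, and they are pairwise disjoint there (the $p_i$ being distinct), so $\pi^{-1}(U)$ is a manifold with boundary, $\frakC_i\cap\pi^{-1}(U)\cong\del_iM_\frakp\times U$, and $\rho_i$ restricts on each slice $M_\frakp\times\{t\}$ to a boundary defining function for the $i$-th boundary circle of $M_\frakp$, uniformly in $t$. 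Every other boundary defining function ($\rho_\calI$ with $|\calI|\ge 2$, or $\rho$) restricts to a strictly positive smooth function on $M_\frakp$. This pins down the surviving weight as $\vec\nu=(\nu_1,\dots,\nu_k)$ and is exactly the abuse of notation in the statement, since $r^{\vec\nu}|_{M_\frakp}$ and $\prod_i\rho_i^{\nu_i}|_{M_\frakp}$ differ by a positive smooth unit. It also shows that over $\pi^{-1}(U)$ the module $\calV_b(\calC_k)$ is spanned over $\calC^\infty$ by the lifts of $\calV_b(M_\frakp)$ together with the flat vector fields on $U$.

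Next I would dispatch the inclusion of the restricted space into $r^{\vec\nu}\calC^{\ell,\delta}_b(M_\frakp)$. Every $V\in\calV_b(M_\frakp)$ extends to an element of $\calV_b(\calC_k)$ tangent to the fibers, and $(Vu)|_{M_\frakp}=V(u|_{M_\frakp})$; moreover restricting a $b$-H\"older difference quotient to the submanifold $M_\frakp$ can only decrease it, since it amounts to taking the supremum over a smaller set of pairs of points. Hence if $u\in r^{\vec\nu}\calC^{\ell,\delta}_b(\calC_k)$, then after dividing off the weight, $V_1\cdots V_j(u|_{M_\frakp})\in\calC^{0,\delta}_b(M_\frakp)$ for all $V_i\in\calV_b(M_\frakp)$ and $j\le\ell$, so $u|_{M_\frakp}\in r^{\vec\nu}\calC^{\ell,\delta}_b(M_\frakp)$, with norm controlled by that of $u$.

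The only real content is the reverse inclusion, i.e.\ that the restriction map is onto, and for this I would build an explicit extension. Given $v\in r^{\vec\nu}\calC^{\ell,\delta}_b(M_\frakp)$, choose $\chi\in\calC_c^\infty(U)$ with $\chi\equiv1$ near $\frakp$ and set $u(m,t)=\chi(t)\,v(m)$ on $\pi^{-1}(U)\cong M_\frakp\times U$, extended by zero to all of $\calC_k$ (which is legitimate since $u$ is compactly supported inside the open set $\pi^{-1}(U)$, and this set is disjoint from all faces $\frakC_\calI$ with $|\calI|\ge 2$). Then $u|_{M_\frakp}=v$, and one checks $u\in r^{\vec\nu}\calC^{\ell,\delta}_b(\calC_k)$: using the spanning set above, any iterated $b$-derivative $V_1\cdots V_j u$ decomposes into terms in which the fiber vector fields act on $v$ (preserving $\calC^{0,\delta}_b$-regularity and the prescribed weight) and the flat vector fields act on the smooth compactly supported factor $\chi$; and since $\pi^{-1}(U)$ carries a genuine product $b$-structure with no corners, writing $\chi(t)w(m)-\chi(t')w(m')=\chi(t)(w(m)-w(m'))+w(m')(\chi(t)-\chi(t'))$ (with $w=v/r^{\vec\nu}$) shows the H\"older quotients in the base and mixed directions are bounded in terms of $\|\chi\|_{\calC^1}$, $\|w\|_{\calC^{0,\delta}_b}$, and the diameter of $\operatorname{supp}\chi$, while the weight is a unit in the base directions.

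Putting the two inclusions together shows that the restriction map has image exactly $r^{\vec\nu}\calC^{\ell,\delta}_b(M_\frakp)$, which is the claim. I expect the one point requiring genuine care to be the final norm estimate for the extension, but because a neighborhood of a fiber over a simple divisor is literally a product of $M_\frakp$ with a Euclidean ball (no corners), this reduces to the elementary splitting above. For a non-simple $q$, where $\pi^{-1}(q)$ is a broken fiber, the same argument applies after replacing the honest local trivialization by the local product-up-to-units structure furnished by the $b$-fibration property of $\pi$ established earlier, and treating each component surface of the fiber with its fiberwise conic structure.
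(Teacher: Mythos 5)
Your argument over the interior of $\calE_k$ is correct and, in its essential mechanism, the same as the paper's: the forward inclusion rests on the facts that the boundary defining functions and tangential coordinates of $\calC_k$ restrict to those of the fiber and that every $V\in\calV_b(M_\frakp)$ extends to some $\wh V\in\calV_b(\calC_k)$, which is exactly how the paper argues; your explicit cutoff extension $u(m,t)=\chi(t)v(m)$ for surjectivity is a sensible way of making precise the step that the paper dismisses as ``straightforward from the definitions.''

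The genuine issue is scope. The lemma is stated, and is used, for \emph{every} fiber $M_\frakp=\pi^{-1}(\frakp)$ with $\frakp\in\calE_k$: it feeds directly into the definition of the fiberwise Friedrichs domain $\calD^{\ell,\delta}_{\mathrm{Fr}}(\calC_k)$, which quantifies over all $\frakp\in\calE_k$, and into the discussion of expansions as $\frakp$ approaches a face $F_\calI$ --- that is, precisely at the broken fibers. Your proof is anchored on the hypothesis that $\frakp$ is a simple divisor, so that $\pi$ is an honest fibration near the fiber and $\pi^{-1}(U)\cong M_\frakp\times U$; over a boundary point of $\calE_k$ no such trivialization exists: $\pi$ is only a $b$-fibration there (the pullback of a base defining function is a product of two defining functions of $\calC_k$), and the fiber is a union of surfaces with boundary meeting along circles. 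In particular the product extension $\chi(t)v(m)$ has no direct meaning near such a fiber, and your closing sentence that ``the same argument applies after replacing the honest local trivialization by the local product-up-to-units structure'' is an assertion rather than an argument --- the product-up-to-units property of a $b$-fibration is a statement about boundary defining functions, not a trivialization of a neighborhood of the whole broken fiber. The paper's proof avoids this by never invoking a trivialization: the boundary faces of $M_\frakp$ are the components of $\left(\cup_{\calI}\frakC_{\calI}\cup \cup_{i}\frakC_{i}\right)\cap M_\frakp$, the defining functions and tangential coordinates of $\calC_k$ restrict to defining functions and tangential coordinates of these faces, the weights restrict accordingly, and fiber $b$-vector fields extend to $\calV_b(\calC_k)$; all of these statements hold uniformly up to the corners of $\calC_k$, so that argument covers broken fibers without modification. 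To complete your proof you would either adopt that coordinate-free reasoning at boundary fibers, or carry out the extension construction locally in adapted corner coordinates rather than through a global product.
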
  
\begin{proof}
Observe first that the boundary faces of $M_{\frakp}$ are the components of the intersection 
$\left(\cup_{\calI}\frakC_{\calI}\cup \cup_{i}\frakC_{i}\right) \cap M_{\frakp}$. 

The fact that the restriction of $\calC_{b}^{0,\delta}(\calC_{k})$ to $M_{\frakp}$ equals $\calC_{b}^{0,\delta}(M_{\frakp})$ is straightforward from
the definitions since the boundary defining functions for the faces of $\calC_k$ restrict to the boundary defining functions
for the faces of each fiber, and there are coordinates tangent to the faces of $\calC_k$ which also restrict to the
the $\theta$ coordinates on each fiber. 

Next, the weight functions restrict naturally as well. Thus we must finally show that $\calC_{b}^{\ell,\delta}(\calC_{k})$ 
restricts to $\calC_{b}^{\ell,\delta}(M_{\frakp})$.  For this, note that if $V \in \calV_{b}(M_{\frakp})$, then there is an extension of 
$V$ to $\wh{V} \in \calV_b(\calC_k)$. Thus if $u \in \calC_{b}^{\ell,\delta}(\calC_{k})$ and $V_j \in \calV_b(M_\frakp)$, $i \leq \ell$,
and if $\wh{V}_i$ are the lifts, then $\wh{V}_1 \ldots \wh{V}_\ell u \in \calC^{0,\delta}_b(\calC_{k})$, and the restriction of this
expression is just $V_1 \ldots V_\ell u$, which by the first step lies in $\calC^{0,\delta}_b(M_\frakp)$. 
\end{proof}

Next, for any fiber $M_{\frakp}$ in $\calC_{k}$, if $\vec{\beta}$ is the set of cone angles, then we construct the Friedrichs-H\"older
domain by including at each $p_j$ the terms with local expressions $r^{j/\beta_i} \phi_j(\theta)$, $0 \leq j <2\beta_{i}$.

\begin{definition}
If $\vec \beta$ is fixed, then the fiberwise H\"older-Friedrichs domain associated to a family of conic metrics $g_{\frakp}$ 
is given by 
$$
\calD^{\ell,\delta}_{\mathrm{Fr}}(\calC_{k})=\{u\in \calC^{\ell,\delta}_{b}(\calC_{k}): \ \Delta_{g_{\frakp}}(u|_{M_{\frakp}}) \in \calC^{\ell,\delta}_{b}
(M_{\frakp}), \ \  \frakp \in \calE_{k}\}.
$$
\end{definition}

It is clear that $\calD^{\ell,\delta}_{\mathrm{Fr}}(\calC_{k})$ varies smoothly with $\frakp \in \calE_k$ over the regular fibers (where
all the $p_i$ are distinct). We may proceed just as in \cite{MW} to obtain that 
\begin{equation}\label{e:fr}
u = a_{0}+\sum_{j=1}^{[2\beta]}r^{\frac{k}{\beta}}\phi_j(\theta) + \tilde u, \ \tilde u \in r^{2}\calC^{\ell+2,\delta}_b,
\end{equation}
where as before $\phi_j = a_{j1} \cos j\theta + a_{j2} \sin j\theta$.  In this free region, smoothness follows from the smoothness 
of the boundary defining functions with respect to $\frakp$.

When $\frakp$ approaches a face $F_{\calI}$ of $\calE_k$, then the aggregate cone angle is $\beta=\sum_{i\in \calI} (\beta_{i}-1)+1$,
and functions in $\calD^{\ell,\delta}_{\mathrm{Fr}}(\calC_{k})$ have the form 
\begin{equation}\label{e:frII}
u=a_{0} f_{0}(w) +\sum_{j=1}^{[2\beta]}\rho^{\frac{j}{\beta}}f_{j}(w) + \tilde u, \ \tilde u \in \rho^{2}\calC_{b}^{\ell+2,\delta}(\calC_{k});
\end{equation}
here $\rho=\rho_{\calI}$ is the boundary defining function for the half sphere $\frakC_{\calI}$ and $f_{j}(w)$, $j=0, \ldots, [2\beta]$, 
are functions on $\frakC_{\calI}$ such that each $\rho^{\frac{j}{\beta}}f_{j}(w)$ is (formally) annihilated by the rescaled operator 
$\rho^{2\beta}\Delta_{g_{\frakp}}$ at $\frakC_{\calI}$. On this front face, the conic points are all separated. Therefore, functions in the 
Friedrichs-H\"older domain annihilated by $\rho^{2\beta}\Delta_{g_{\frakp}}$ are as in \eqref{e:fr}, where each cone points on this face
has the obvious cone angle extended from the interior of $\calC_k$. This means that functions on fibers $M_\frakp$ near this
face extend smoothly to this face. 

\section{Flat conical metrics}\label{s:flat}
We now begin our analysis of the space of constant curvature conic metrics by studying the simplest case, when the problem is linear.
Thus we fix closed surface $M$ and a set of cone angles $\vec \beta$ such that 
\begin{equation}
\chi(M, \vec{\beta}) := \chi(M) + \sum_{j=1}^k (\beta_j-1)=0.
\label{zeroEuler}
\end{equation} 
It is standard that if \eqref{zeroEuler} is satisfied, then for each marked conformal structure $(\frakc, \frakp)$ there exists a 
unique flat conic metric with area 1 and cone angle $2\pi \beta_i$ at $p_i$.  Our goal in this section is to show that this family
of flat conic metrics is polyhomogeneous on $\calC_k$. 

This is a local regularity theorem, so we fix a smooth family $g_0(\frakc)$ of smooth constant curvature metrics on $M$ representing
a neighborhood in the space of (unmarked) conformal structures.    For each $\frakp \in \calD_k^s$, consider the linear problem 
\[
\Delta_{g_0(\frakc)}  G =  2\pi \sum_{i=0}^k (\beta_i - 1)\delta_{p_i}.
\]
Then 
\[
- \int_M K = -2\pi \chi(M) = \int_M 2\pi (\sum (\beta_i - 1) \delta_{p_i},
\]
we see that the Liouville equation 
\[
\Delta_{g_0(\frakc)} u + K_{g_0(\frakc)} = 0
\]
has a solution $u = G$ which is unique if we require that $\int_M G = 0$.   This solution $G$ is essentially the Green function
for $\Delta_{g_0}$. It clearly depends smoothly on $\frakc$,  $\frakp \in \calD_k^s$ and $z \in M \setminus \frakp$, and near
each $p_i$ has the form
\[
G \sim (\beta_i - 1)\log |z| + \tilde{G}_i
\]
where each $\tilde{G}_i$ is $\calC^\infty$ in a neighborhood of $p_i$. We then define
\begin{equation}\label{e:flat}
g_{0}( \frakc, \frakp, \vec \beta)= e^{2G} g_0(\frakc). 
\end{equation}
Each of these metrics is flat and, because of the asymptotic structure of $G$, has a conic singularity with cone angle
$2\pi \beta_i$ at $p_i$.  This family of metrics is smooth when all the points $p_i$ are distinct.
 
\begin{theorem}\label{flat}
Fix $\vec \beta$ satisfying \eqref{zeroEuler}, then the family of flat metrics $g_0(\frakc, \frakp, \vec\beta)$ 
extends to a polyhomogeneous family of fiber metrics on $\calC_k$.
\end{theorem}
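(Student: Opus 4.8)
The plan is to exploit the fact that in the flat case the fiber metric is an \emph{explicit} conformal multiple of the smooth reference metric, $g_0(\frakc,\frakp,\vec\beta)=e^{2G}g_0(\frakc)$, and to prove directly that the conformal factor, equivalently $G$, lifts to a polyhomogeneous function on $\calC_k$, with smooth dependence on $\frakc$. Recall that a function on a manifold with corners is polyhomogeneous if near each boundary face it has an asymptotic expansion in powers (bounded below, here in fact $\ge 0$ off the leading terms) and integer powers of logarithms of the boundary defining functions; smooth functions are polyhomogeneous, and the class is closed under sums, products, and — when the exponents appearing are $\ge 0$ off the purely logarithmic leading terms — under composition with the exponential. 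So it suffices to show $G$ is polyhomogeneous on $\calC_k$; then $e^{2G}$ is too, and since $g_0(\frakc)$ is pulled back from $M$ it lifts to a smooth fiber metric, so the product is a polyhomogeneous fiber metric.

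First I would record the structure of $G$. Let $N_{g_0(\frakc)}(z,p)$ denote the Schwartz kernel of a fixed right inverse of $\Delta_{g_0(\frakc)}$ on functions with vanishing mean. Solving the Liouville equation and normalizing $\int_M G\,dA_{g_0}=0$ gives
\[
G(\frakc,\frakp,\vec\beta)=2\pi\sum_{i=1}^k(\beta_i-1)\,N_{g_0(\frakc)}(\,\cdot\,,p_i)+v(\frakc,\frakp),
\]
where $v$ is $\calC^\infty$ jointly in $(\frakc,\frakp)$ and in $z\in M$ (it assembles the contribution of $K_{g_0(\frakc)}$ and the additive constant). Elliptic parametrix theory gives, in a fixed holomorphic coordinate near the diagonal of $M\times M$, $N_{g_0(\frakc)}(z,p)=\tfrac1{2\pi}\log|z-p|+H(\frakc,z,p)$ with $H$ smooth, while $N_{g_0(\frakc)}$ is smooth away from the diagonal. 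Hence, with $\chi$ a smooth cutoff equal to $1$ near and supported near the diagonal,
\[
G=\sum_{i=1}^k(\beta_i-1)\,\chi(z,p_i)\log|z-p_i|\;+\;\widetilde G ,
\]
where $\widetilde G$ is the pullback, under the blowdown $\calC_k\to\calE_k\times M\xrightarrow{\ \beta\times\id\ }M^k\times M$, of a function smooth in all variables; in particular $\widetilde G$ is trivially polyhomogeneous on $\calC_k$. Everything is thus reduced to showing that $\log|z-p_i|$ lifts to a polyhomogeneous function on $\calC_k$ for each $i$.

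The heart of the argument is the claim that, on the region where $\chi(z,p_i)\neq 0$, the function $|z-p_i|$ lifts to $\calC_k$ as
\[
\Big(\prod_{\calI\,:\,i\in\calI,\ |\calI|\ge2}\rho_{\frakC_\calI}\Big)\,\rho_{\frakC_i}\,b_i,\qquad b_i\in\calC^\infty,\ \ b_i>0,
\]
where $\rho_{\frakC_\calI}$ (resp.\ $\rho_{\frakC_i}$) is the defining function of the face produced by blowing up the coincidence set $F_\calI^\sigma$ (resp.\ $F_i^\sigma$), and no defining function of a fiber face enters since $\{z=p_i\}$ does not meet those faces where $\chi(z,p_i)\ne 0$. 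This is precisely where the design of $\calC_k$ is used: the divisor $\{z=p_i\}\subset\calE_k\times M$ contains every coincidence set $F_\calI^\sigma$ with $i\in\calI$ (because on $F_\calI$ one has $p_i=\sigma^\calI(\frakp)$), and its proper transform meets $F_i^\sigma$ transversally at the last stage; thus the iterated blowups in \eqref{extconfspace} peel off exactly one factor $\rho_{\frakC_\calI}$ for each $\calI\ni i$ and finally one factor $\rho_{\frakC_i}$, leaving a nonvanishing smooth remainder. That each multiplicity is exactly one and that the successive proper transforms together with the remaining coincidence sets always form an admissible configuration is bookkeeping of the same kind as the transversality lemmas of \S\ref{s:resolution} and the $b$-fibration proposition, and can be checked directly in the polar coordinates of \S\ref{ss:k2}--\S\ref{ss:k3}; for instance when $k=2$ one has $z-z_1=R_{12}\bigl(\cos\omega\,e^{i\phi}-\sin\omega\,e^{i\theta}\bigr)$, where $R_{12}$ defines $\frakC_{12}$ and $|\cos\omega\,e^{i\phi}-\sin\omega\,e^{i\theta}|$ vanishes simply along $\frakC_1$. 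Granting this, $\log|z-p_i|=\sum_{\calI\ni i}\log\rho_{\frakC_\calI}+\log\rho_{\frakC_i}+\log b_i$ is polyhomogeneous, and multiplying by $\chi(z,p_i)$ and summing over $i$ gives
\[
G=\sum_{|\calI|\ge2}\Bigl(\sum_{i\in\calI}\beta_i-|\calI|\Bigr)\log\rho_{\frakC_\calI}+\sum_{i=1}^k(\beta_i-1)\log\rho_{\frakC_i}+(\text{bounded polyhomogeneous}) .
\]

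To finish, the leading exponents $\sum_{i\in\calI}\beta_i-|\calI|=\beta_0^\calI-1$ (with $\beta_0^\calI$ the merged angle parameter of \eqref{e:beta0}) and $\beta_i-1$ are the only non-smooth data, so $e^{2G}=\prod_{|\calI|\ge2}\rho_{\frakC_\calI}^{\,2(\beta_0^\calI-1)}\prod_{i}\rho_{\frakC_i}^{\,2(\beta_i-1)}\cdot e^{2(\text{bounded phg})}$ is polyhomogeneous; the displayed exponents also show that on every fiber, regular or singular, $e^{2G}g_0(\frakc)$ is a conic metric with cone angle $2\pi\beta_i$ at $\frakC_i$ and $2\pi\beta_0^\calI$ at a merged point $\frakC_\calI$, as it must be. The main obstacle is the structural claim of the third paragraph: controlling the lift of the divisor $\{z=p_i\}$ through the \emph{entire} iterated blowup defining $\calC_k$, i.e.\ verifying at every stage that the proper transform of this divisor together with the as-yet-unblown-up coincidence sets forms a configuration for which the next blowup is legitimate, and that the exceptional multiplicity acquired at each $\frakC_\calI$ is exactly one. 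I expect this to be handled cleanly by the same induction on $k$ used for the $b$-fibration result, together with the cluster-decomposition lemma of \S\ref{ss:bd}, which localizes the picture to one cluster at a time.
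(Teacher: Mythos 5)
Your argument is correct in substance and rests on the same basic reduction as the paper --- everything comes down to showing that the explicit conformal factor $G=\sum_i(\beta_i-1)\log|z-p_i|+(\text{smooth})$ lifts polyhomogeneously to $\calC_k$ --- but you package the key step differently. You isolate a purely geometric, $\vec\beta$-independent statement about the resolution itself: the lift of each incidence divisor $\{z=p_i\}$ is such that $|z-p_i|$ becomes $\bigl(\prod_{\calI\ni i,\,|\calI|\ge2}\rho_{\frakC_\calI}\bigr)\rho_{\frakC_i}$ times a positive smooth factor, after which $\log|z-p_i|$ is a finite sum of logarithms of boundary defining functions plus a smooth term, and the theorem follows by summing over $i$ and exponentiating. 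The paper instead proves polyhomogeneity of $G$ directly, by induction on $k$: the explicit polar-coordinate computation for $k=2$ (Proposition~\ref{flat2}, cf.\ \eqref{e:conf21}), then the cluster decomposition of Lemma~\ref{l:tree} to localize to sub-clusters and the factorization \eqref{e:flatconformal} to peel off the $\log R_{\calI}$ terms. The two routes rest on the same computations: the multiplicity-one claim you flag as the ``main obstacle'' is exactly what the paper's face-by-face coordinate expansions verify, and the induction on $k$ via cluster decompositions that you propose for its proof is precisely the paper's induction, so the total work is comparable. What your formulation buys is a cleaner, reusable statement about the divisor structure of $\calC_k$ and a transparent reading of the cone angle $2\pi\beta_0^{\calI}$ carried by each face; what you would still need to write out is that verification. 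One small correction of wording: for $\calJ\not\ni i$ the lifted divisor $\{z=p_i\}$ can certainly meet the fiber face lying over $F_{\calJ}$ --- it just does not contain it, which is the actual reason no factor of that face's defining function appears.
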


Because $g_0(\frakc)$ is smooth, it suffices to show that the scalar function $G$ extends to be polyhomogeneous on $\calC_k$. 
Note that by the remarks above, $G$ is $\calC^\infty$ on the interior of the extended configuration family, so our task is
to examine its behavior near each of the boundary faces and corners of $\calC_k$.  In other words, we must prove that there
exists an index family $\{ E_\calI\}$ such that 
$$
G \sim \sum_{(j, \ell) \in \mathcal{E}_{\calI}} (\rho_{\calI})^{j} (\log \rho_{\calI})^{\ell} a_{j\ell} (w_\calI),
$$
where $w_\calI$ are variables in the interior of each $\frakC_\calI$ and each $a_{j \ell}$ is polyhomogeneous with index 
family $\{E_{\calJ}\}_{\calJ \neq \calI}$.  Note that polyhomogeneity of $G$ near  the simplest faces $\frakC_i$ is obvious. 
We also suppress the smooth dependence of $G$ on $\frakc$. 

\subsection{The case of two cone points}\label{ss:flt2}
The proof of Proposition~\ref{flat} is by induction on $k$.  We begin with the proof when $k=2$. 
\begin{proposition}\label{flat2}
When $k=2$, $G(z,\frakp)$ is polyhomogeneous on $\calC_{2}$. 
\end{proposition}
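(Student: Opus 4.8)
The plan is to peel off the two logarithmic singularities of $G$ and to check directly, in the blowup coordinates of $\calC_2$, that each of them lifts to a polyhomogeneous function on $\calC_2$; the remaining part of $G$ will be smooth on $M^3$ and hence lift trivially under the blowdown.

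First I would record the classical structure of the Green's function. Fixing a coordinate chart $\calU$ on $M$, one has $\Gamma_{g_0(\frakc)}(z,p) = \frac{1}{2\pi}\log|z-p| + H(z,p,\frakc)$ for $z,p \in \calU$, with $H$ smooth (standard elliptic parametrix theory). Since $\chi(M,\vec\beta)=0$, the normalized solution $G$ of the Liouville equation is a fixed linear combination $2\pi(\beta_1-1)\Gamma_{g_0(\frakc)}(z,p_1) + 2\pi(\beta_2-1)\Gamma_{g_0(\frakc)}(z,p_2) + \mathrm{const}$, so in such a chart
\[
G(z,p_1,p_2) = (\beta_1-1)\log|z-p_1| + (\beta_2-1)\log|z-p_2| + E(z,p_1,p_2),
\]
with $E$ smooth in $(z,p_1,p_2)$ (and in $\frakc$). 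Since the composite blowdown $\calC_2 \to \calE_2\times M \to M^3$ is a smooth map, $E$ pulls back to a smooth, hence polyhomogeneous, function on $\calC_2$. As $G$ is already $\calC^\infty$ on the interior of $\calC_2$, and polyhomogeneity is a local property, it remains only to examine the lifts of $\log|z-p_1|$ and $\log|z-p_2|$ near the boundary faces $\frakC_{12}$, $\frakC_1$, $\frakC_2$; by the evident symmetry I would treat $\log|z-p_1|$.

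Near $\frakC_1$ but away from $\frakC_{12}$ (so $p_1\neq p_2$), the only relevant blowup is that of $F_1^\sigma=\{z=p_1\}$, and $|z-p_1|^2$ is a smooth function vanishing to second order with nondegenerate Hessian along this codimension-two $p$-submanifold; hence on the blowup it equals $\rho_{\frakC_1}^2$ times a smooth positive factor, and $\log|z-p_1| = \log\rho_{\frakC_1} + \calC^\infty$ is polyhomogeneous there. Near $\frakC_{12}$ I would use the coordinates $(\zeta, R_{12}, \omega, \phi, \theta)$ coming from the blowup of $F_{12}^\sigma$, in which $z-\zeta = R_{12}\cos\omega\, e^{i\phi}$ and $z_1-\zeta = w = \rho_{12}e^{i\theta} = R_{12}\sin\omega\, e^{i\theta}$, so
\[
z - p_1 = R_{12}\bigl(\cos\omega\, e^{i\phi} - \sin\omega\, e^{i\theta}\bigr),\qquad |z-p_1|^2 = R_{12}^2\,\psi_1,\quad \psi_1 := 1 - \sin 2\omega\,\cos(\phi-\theta).
\]
Thus $\log|z-p_1| = \log R_{12} + \tfrac12\log\psi_1$. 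Here $R_{12}$ is (a smooth positive multiple of) a boundary defining function for $\frakC_{12}$ on $\calC_2$ — this persists under the later blowups of $F_1^\sigma$ and $F_2^\sigma$, which are $p$-submanifolds transverse to, or separated from, $\frakC_{12}$ in the relevant charts — so $\log R_{12}$ is polyhomogeneous. The function $\psi_1$ is smooth and nonnegative; in the variables $u = 2\omega-\tfrac{\pi}{2}$, $v = \phi-\theta$ it equals $1-\cos u\cos v$, which vanishes precisely on $F_1^\sigma = \{u=v=0\}$, to second order with Hessian $\mathrm{diag}(1,1)$. Introducing polar coordinates $(u,v) = s(\cos\Theta,\sin\Theta)$ adapted to the blowup of $F_1^\sigma$ gives $\psi_1 = s^2 q(s,\Theta)$ with $q$ smooth and $q(0,\Theta) = \tfrac12$, so $\tfrac12\log\psi_1 = \log s + \tfrac12\log q$; since $s$ is a boundary defining function for $\frakC_1$ and $q$ is smooth and positive, this term is polyhomogeneous as well. (In the complementary chart near $\omega = \pi/2$, where one uses $\widehat z=(z-\zeta)/R_{12}$, the point $z$ stays away from both $p_1$ and $p_2$, so $|z-p_1|$ equals $R_{12}$ times a smooth positive function and nothing new happens.) Combining these, $\log|z-p_1| = \log R_{12} + \log s + \tfrac12\log q$ near the corner $\frakC_{12}\cap\frakC_1$, which is manifestly polyhomogeneous with an extremely simple index family — only order-zero logarithmic terms at $\frakC_{12}$, $\frakC_1$, and likewise $\frakC_2$. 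Adding back the smooth term $E$ and summing over $i=1,2$ yields polyhomogeneity of $G$ on $\calC_2$.

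I do not expect a genuine obstacle here; as the introduction anticipates, this is a direct computation. The only points demanding care are the bookkeeping — verifying that $R_{12}$ and $s$ really remain boundary defining functions for $\frakC_{12}$ and $\frakC_1$ after the full sequence of blowups defining $\calC_2$, keeping track of which region of $\calC_2$ each coordinate system covers, and dispatching the degenerate charts — together with the (standard) input that the regular part $H$ of the Green's function is jointly smooth. This base case will feed the induction on $k$ used for Theorem~\ref{flat}.
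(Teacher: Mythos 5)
Your proposal is correct and follows essentially the same route as the paper: a direct computation in the blowup coordinates, writing $G$ as $(\beta_1-1)\log|z-p_1|+(\beta_2-1)\log|z-p_2|$ plus a smooth term, factoring out $\log R_{12}$ at $\frakC_{12}$, and checking that the remaining factors lift to (log of) defining functions at $\frakC_1$, $\frakC_2$ and the corner. Your treatment is in fact marginally more careful than the paper's, since you explicitly carry the smooth remainder $E$ of the Green's function and verify that $R_{12}$ survives the later blowups as a defining function, points the paper leaves implicit.
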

\begin{proof}
Suppose that the two points $p_1$ and $p_2$ converge at the point $p_{12}$, which we may as well assume is fixed and
is the center of mass of these two points. Referring to the local coordinates in \S 2.3, we 
may as well restrict to a slice where $\zeta = \zeta_0 = 0$.
%and also work on the space of ordered pairs $\wt{\calC}_2$ rather than $\calC_2$. 
Then
\begin{equation}\label{e:conf2}
\begin{aligned}
G(z, \frakp) & =(\beta_{1}-1)\log|z-w| +(\beta_{2}-1)\log|z+w|  \\ & 
= (\beta_1 - 1) \log | re^{i\phi} - \rho e^{i\theta}| + (\beta_2-1) \log | re^{i\phi} + \rho e^{i\theta}|. 
\end{aligned}
\end{equation}
By \eqref{S2+}, $r=R_{12}\cos \omega,  \rho=R_{12}\sin \omega$, so
\begin{equation}\label{e:conf21}
\begin{aligned}
G(z, \frakp) = (\beta_{12}-1) \log R_{12} & + (\beta_1 - 1) \log |\cos \omega e^{i(\phi-\theta)} - \sin \omega| \\
& + (\beta_2-1) \log | \cos \omega e^{i(\phi-\theta)} + \sin \omega|.
\end{aligned}
\end{equation}
Here, and later in this paper, we set
\begin{equation}\label{defb0}
\beta_{12} = \beta_1 + \beta_2 - 1,
\end{equation}
i.e., $2\pi \beta_{12}$ is the cone angle which results when two cone points with cone angles $2\pi \beta_1$ and $2\pi \beta_2$
merge. The expression in \eqref{e:conf21} is certainly polyhomogeneous as $R_{12} \to 0$ away from $\omega = 0$ 
(the corner, where $\frakC_{12}$ meets $[M; \{p_{12}\}]$) and the points where $e^{i(\phi-\theta)} = \pm 1$.    To understand behaviour 
near the corner, write $s = \rho/r = \tan \omega$, so that when $\omega < \pi/4$, say, and recalling that $r = R_{12}\cos \omega$, 
we have 
\begin{equation}
\label{GnearC}
\begin{aligned}
G(z,\frakp) = (\beta_{12} -1) \log r + & \frac12 (\beta_1-1) \log (1 - 2s \cos (\theta-\phi) + s^2) \\
+ & \frac12 (\beta_2 - 1) \log (1+2s\cos(\theta-\phi) + s^2),
\end{aligned}
\end{equation}
The second and third terms on the right are smooth and vanish $s=0$. Note that there is an apparent asymmetry in
the indices $1$ and $2$ here; however, when the points $p_1$ and $p_2$ are switched, the angle $\theta$ changes
to $\theta + \pi$, so this expression is actually symmetric after all. Finally, near $\omega = \pi/4$ 
and $\theta = \phi$, for example, write $\tan \omega = 1 + \sigma$, so that $\cos \omega = 1/\sqrt{2 + 2\sigma+\sigma^2}$. Then
\begin{equation*}
\begin{aligned}
G(z,\frakp) = (\beta_{12} -1) \log (R_{12}/\sqrt{2+2\sigma+\sigma^2})  & + (\beta_1 - 1) \log | e^{i(\theta-\phi)} - 1 - \sigma| \\
& + (\beta_2-1) \log |e^{i(\theta - \phi)} + 1 + \sigma|,
\end{aligned}
\end{equation*}
and this is obviously polyhomogeneous around the face $\frakC_1$ created by blowing up $\sigma = 0$. The argument
is the same near $\frakC_2$. 

The assertion about polyhomogeneity of $G$ on $\calC_2$ is now proved.  
\end{proof}

\subsection{Inductive proof for higher $k$}\label{ss:fltk}
\begin{proof}[Proof of Proposition~\ref{flat}] Suppose that the result has been proven for $\calC_{j}$ with any $j < k$. Without loss of generality, we can restrict to the slice with the fixed center of mass $\zeta=\zeta_{0}=0$.

We first consider the case that is away from the central diagonal $\frakC_{1\dots k}$, that is, at most $k-1$ points can merge together. This is the case for example when the configuration $\vec \beta\in \RR^{k}$ is such that $\sum_{i=1}^{k}(\beta_{i}-1)\leq 1$. Then we can cover $\calE_{k,\vec \beta}$ by open sets $\{U_{\calI_{*}, \epsilon}\}$ defined in~\eqref{e:cluster}.   That is,  the only possible merging happens within the sub-clusters, and the distance between any clusters is bounded away from 0. From Lemma~\ref{l:tree}, $U_{\calI_{*}, \epsilon}$ locally has a product structure, identified with an open subset $\Pi_{j=1}^{\ell}U_{\calI_{j}}\subset \Pi_{j=1}^{\ell}\calE_{|\calI_{j}|}$. The total space fibers over $U_{\calI_{*}, \epsilon}$, and is given locally by a product of fibrations.
In this case, the conformal factor can be written as a sum
$$
G= \sum_{j=1}^{\ell}(\sum_{i\in \calI_{j}} (\beta_{i}-1)\log|z-z_{i}|).
$$ 
Since $\{z_{i}\}_{i\in \calI_{j}}$ is bounded away from any other clusters $\frakp_{\calI_{j'}}, j'\neq j$, the term $\sum_{i\in \calI_{j}} (\beta_{i}-1)\log|z-z_{i}|$ is only singular near $V_{i}$ as defined in Lemma~\ref{l:tree}.
By induction, this term is polyhomogeneous on $\calC_{|\calI_{j}|}$, hence is polyhomogeneous on  $U_{\calI^{*}, \epsilon}$. Same argument can be applied to other terms. By considering all the open covers, we get polyhomogeneity of $G$ on $\calC_{k}$ in this case.

Now we consider the behavior near the central face, and all the $k$ points can merge together. 
We now prove that away from all sub-diagonals, $G$ is polyhomogeneous near $\frakC_{1,\dots, k}^{0}$.   In this region we write
$p_i = z_i + \zeta$ and assume that the center of mass $\zeta = 0$. Then writing $(z, z_1, \ldots, z_k) = R_{1 \ldots k} \Omega$,
$\Omega = (\Omega_0, \ldots , \Omega_k)$, we have 
\[
G = \sum_{i=1}^k  (\beta_i - 1) \log |z - z_i| = (\sum_{i=1}^{k} \beta_i - k)\log R_{1\ldots k} + \sum_{i=1}^{k} (\beta_i - 1) \log |\Omega_0 - \Omega_i|
\]
and since $z$ remains bounded away from the sub-diagonals, only the term $\log R_{1 \ldots k}$ is singular here and this is obviously
polyhomogeneous on the interior of $\frakC_{1 \ldots k}$. And each term $(\beta_i - 1) \log |\Omega_0 - \Omega_i|$ is singular only near $\frakC_{i}$ and is polyhomogeneous.

Near the outer boundary of $\frakC_{1 \ldots k}$, set $z = r e^{i\phi}$ and $w_i = z_i/r$. Then
\[
G = (\sum_{i=1}^k \beta_i - k) \log r + \sum_{i=1}^{k} (\beta_i - 1) \log |e^{i\phi} - w_i|.
\]
Notice that all the faces $\frakC_i$ occur along the submanifolds $\{z = z_i\} \subset \{ |w_i| = 1\}$, so provided
we stay away from these submanifolds, only the first term $(\sum_{i=1}^k \beta_i - k) \log r$ is singular, and it is polyhomogeneous.  
At the principal diagonal $\frakC_i$, however, $w_i = e^{i\theta_i}$ and the additional singular term is $\log |e^{i \phi} - e^{i\theta_i}|$, 
which is polyhomogeneous there. 

% Using a similar computation as in Proposition~\ref{flat2}, near the bottom circle of $\frakC_{1,\dots,k}$, 
% \begin{equation}\label{e:kpoint}
% u=\sum_{i=1}^{k} (\beta_{i}-1)\log |z-p_{i}| = \sum_{i=1}^{k} (\beta_{i}-1) \log R_{1,\dots, k} + \sum_{i=1}^{k} (\beta_{i}-1) \log |w-w_{i}|,
% \end{equation}
% where $w_{i}=\frac{p_{i}}{z}$ and the boundary defining function $R_{1,\dots,k}\sim |z|$. Here the second term is smooth since $w$ is the coordinate on the half sphere and $w_{i}$ are uniformly bounded away from 0. Near the i-th conic point on the front face, the conformal factor is the form
% $$
% u=\sum_{i=1}^{k}(\beta_{i}-1) \log r+(\beta_{i}-1)\log |w-w_{i}| + \sum_{j\neq i} (\beta_{j}-1)\log |w-w_{j}|,
% $$ 
% where the last term is smooth.

Finally, if $\frakp$ is near any one of the partial diagonals, including near their intersection with $\frakC_{1\ldots k}$, 
then it is in a neighborhood of some intersection of front faces $\{\frakC_{\calI_{j}}\}_{j=1}^{\ell}$, where each $\calI_{j}$ is
a proper subset of $\{1,\dots, k\}$, and the $\calI_{j}$ have no elements in common. The resolution ensures
that the faces $\frakC_{I_i}$ and $\frakC_{I_j}$ are disjoint, so we can once again factor out the defining function
$R_{1 \ldots k}$ and separate out the indices $i$ which do not lie in any of the $\calI_j$, and write
\begin{equation}\label{e:flatconformal}
G = (\sum_{i=1}^{k} \beta_{i}-k ) \log R_{1,\dots, k} +  \sum_{i \notin \cup_{j}\calI_{j} } (\beta_{i}-1) \log |w -w_{i}| + \sum_{j=1}^{\ell} f_{j},
\end{equation}
where $w = z/R_{1 \ldots k}$, $w_i = z_i / R_{1 \ldots k}$.   
Here $f_{j}$ is the rescaled factor
$$
f_{j}=\sum_{i\in \calI_{j}} (\beta_{i}-1)\log |w-w_{i}|= \sum_{i\in \calI_{j}} (\beta_{i}-1)\log R_{\calI_{j}} + \sum_{i\in \calI_{j}} (\beta_{i}-1)\log |\Omega^{\calI_{j}}_{0}-\Omega^{\calI_{j}}_{i}|
$$
where $R_{\calI_{j}}$ is the boundary defining function for $\frakC_{\calI_{j}}$ and the coordinates over this face is given by $(w, w_{i})_{i\in \calI_{j}}=R_{\calI_{j}}(\Omega^{\calI_{j}}_{0}, \Omega^{\calI_{j}}_{i})$.
By induction, each rescaled factor $f_j$ is up to a smooth summand
%When each cluster is viewed as one cone point of angle factor $\sum_{i\in \cal_{j}} (\beta_{i}-1)$, by induction, the total conformal factor (away from the ``smaller'' corners) is given by
%where each $f_{j}$ is the rescaled factor near the front face $\frakC_{\calI_{j}}$. By induction, 
the Green function near $\calI_j$ and hence is polyhomogeneous near the collection of faces which constitute
the resolution near this cluster. %For example, near the  corner $\frakC_{\calI_{j}} \cap \frakC_{1,\dots, k}$
%$$
%f_{j}= \sum_{i\in \calI_{j}}(\beta_{i}-1)\log R_{\calI_{j}} + \sum_{i\in \calI_{j}} (\beta_{i}-1) \log |w_{\calI_{j}}-w_{i}|, 
%$$
%and other $f_{i}$'s are all smooth at this corner.
This behavior is uniform as $R_{1 \ldots k} \to 0$. 

It is perhaps wise to illustrate this induction for $\calC_{3}$. In this case, near $\frakC_{12}\cap \frakC_{123}$ we can write
$$
G=(\beta_{1}-1)\log |z-\epsilon_{1}(1+\epsilon_{2})| + (\beta_{2}-1) \log |z-\epsilon_{1}(1-\epsilon_{2})| +(\beta_{3}-1)\log|z+\epsilon_{1}|
$$
We wish to examine the region $R_{123} \to 0$ and $R_{12} \to 0$, and in this region, $\epsilon_{1}\sim R_{123}$, $\epsilon_{2}\sim R_{12}$.
Therefore, using $w_{12}=\frac{w_{3}-1}{\epsilon_{2}}$ as a coordinate on $\frakC_{12}$, 
\begin{multline}
G=(\sum_{i=1}^{3}\beta_{i}-3)\log \epsilon_{1} + (\sum_{i=1}^{2} \beta_{i}-2) \log \epsilon_{2}\\
+(\beta_{1}-1) \log |w_{12}-1|+(\beta_{2}-1) \log |w_{12}+1|\\
+ (\beta_{3}-1)\log |w_{3}+1|.
\end{multline}
Since $w_{3} \sim 1$ here, this is polyhomogeneous.
\end{proof}

\section{Hyperbolic conic metrics}\label{s:hyp}
We next turn to the analytic description of the space of hyperbolic cone metrics which, as explained earlier, exist whenever
\[
\chi(M) + \sum_{i=1}^k (\beta_i - 1) < 0. 
\]
The problem is now genuinely nonlinear and the proof of polyhomogeneity correspondingly more difficult.  Indeed, the
proof is directly inductive on the number of cone points.  We now explain the strategy, which requires several steps.   

For the case $k=2$, we construct a family of background metrics which is hyperbolic away from the merging points and flat near 
these points, with a transitional region in between.  Let $\rho$ be the degeneration parameter which measures the distance to
the fiber where the points coincide. We then solve for the expansion of the conformal factor iteratively on $M_{\frakp'}$ then on $\frakC_{12}$. This way we construct approximate solutions to arbitrarily high order of $\rho$. We then solve away the error in the exact curvature equation on each fiber using maximum principle. Finally we use commutator argument to show conormality and polyhomogeneity. Once the
theorem has been established for $k=2$, we follow an inductive procedure to construct families of background metrics in
the general case with the same properties, and once again solve away the error terms and show polyhomogeneity.

The case $k=2$ already contains essentially all of the substantial difficulties, so this case is presented in careful
detail. 

\subsection{The case of two merging cone points} \label{ss:hyp2}
Consider a family of simple divisors $\frakp$ which converge to a point $\frakq \in F_{12} \subset \calE_k$.  We may as well
assume that $p_3, \ldots, p_k$ remain fixed, but $p_1$ and $p_2$ merge at a point $p_{12}$ which, for simplicity, we assume 
is the center of mass of $p_1$ and $p_2$ and also remains fixed.  We write $\frakp'$ for the (simple) $(k-1)$-tuple 
$(p_{12}, p_3, \ldots, p_k)$. % and $\frakq$ for the limiting point on the face $F_{12} \subset \calE_k$.  
We are working locally near $\frakq$, and this point is far from any of the other partial diagonals, so we use the local 
coordinates on $\calE_2$ and $\calC_2$: $\rho = \rho_{12}$ and $R = R_{12}$, and for simplicity we set $\theta_{12} = 0$, 
which amounts to fixing the direction through which $p_1$ and $p_2$ approach one another. If $\beta_i$ are the cone 
parameters at $p_i$, then as noted earlier, $\beta_1$ and $\beta_2$ determine the limiting cone parameter $\beta_{12} 
= \beta_1 + \beta_2 - 1$ at $p_{12}$. In order for the two points to merge, it is necessary that
$$
\beta_{1}+\beta_{2}>1  \Leftrightarrow \beta_{12} > 0.
$$

The fiber $\pi^{-1}(\frakq) \subset \calC_k$ consists of two surfaces with boundary,  $M_{\frakp'} = [M; \{\frakp'\}]$ 
(the surface $M$ blown up at the points in $\frakp'$) and the face $\frakC_{12}$, and these meet along a common circle.  

\medskip

\noindent{\bf The initial metric}

We now construct a family of metrics on $M$ with $k$ conic singularities at the family of divisors $\frakp$ above, 
which extends as a smooth family of fiberwise metrics on $\calC_k$.  This family is obtained locally near the fiber
$\pi^{-1}(\frakq)$ by gluing the fixed hyperbolic metric $h_{0,\frakp'}$ with conic singularities at $\frakp'$, with
cone parameters $\beta_{12}, \beta_3, \ldots, \beta_k$, to the degenerating family of flat metrics $g_{0,\frakp}^{\mathrm{fl}}$ in \eqref{e:flat}. 
To do this, define 
\begin{equation}\label{e:model}
g_{0,\frakp}=\chi g^{\mathrm{fl}}_{0,\frakp} + (1-\chi) h_{0,\frakp'},
\end{equation}
where 
\[
\chi(z,\frakp)= 
\begin{cases} 
1\ \mbox{if} \ \rho <\bar{\rho}\ \mbox{and} \ |z|<2\bar{\rho} \\ 0\  \mbox{if}\ \rho > 2\bar{\rho}\ \mbox{or} \ |z|>4\bar{\rho}
\end{cases}
\]
is a smooth nonnegative cutoff function for some small $\bar{\rho} > 0$. We usually drop $\frakp$ and $\frakp'$ from the subscripts 
for simplicity, and also write 
\[
K_{0,\rho} = \begin{cases}
0\  &\mbox{if}\ \rho < \bar{\rho} \text{ and } |z|<2\bar{\rho} \\
-1 &\mbox{if}\  \rho >2\bar{\rho} \text{ or } |z|>4\bar{\rho}. 
\end{cases}
\]
for the curvature of the metrics in this family. 

Our goal is to obtain precise analytic control of the solution to the conformal curvature equation
\begin{equation}\label{e:hyp}
\Delta_{g_{0,\frakp}} u+e^{2u}+K_{0,\rho}=0
\end{equation}
as $\rho \to 0$.  In the neighborhood where $\chi = 1$, \eqref{e:hyp} becomes
\begin{equation}
\Delta_{g_{0,\rho}}u+e^{2u}=0.
\label{e:hyp0}
\end{equation}
Here $\Delta_{g_{0, \frakp}}$ is the Laplace--Beltrami operator with nonnegative spectrum.
% In this region we can write
%$$
%g_{0,\frakp}=e^{2v_0}|dz|^{2}
%$$
%where 
%$$
%v_{0}=(\beta_{1}-1)\log |z-\rho| + (\beta_{2}-1)\log|z+\rho| + \tilde{v}_0
%$$ 
%for some harmonic function $\tilde{v}_0$ which is uniformly bounded as $\rho \to 0$. 

\begin{figure}[h]
\centering
 \includegraphics[width=0.5\textwidth]{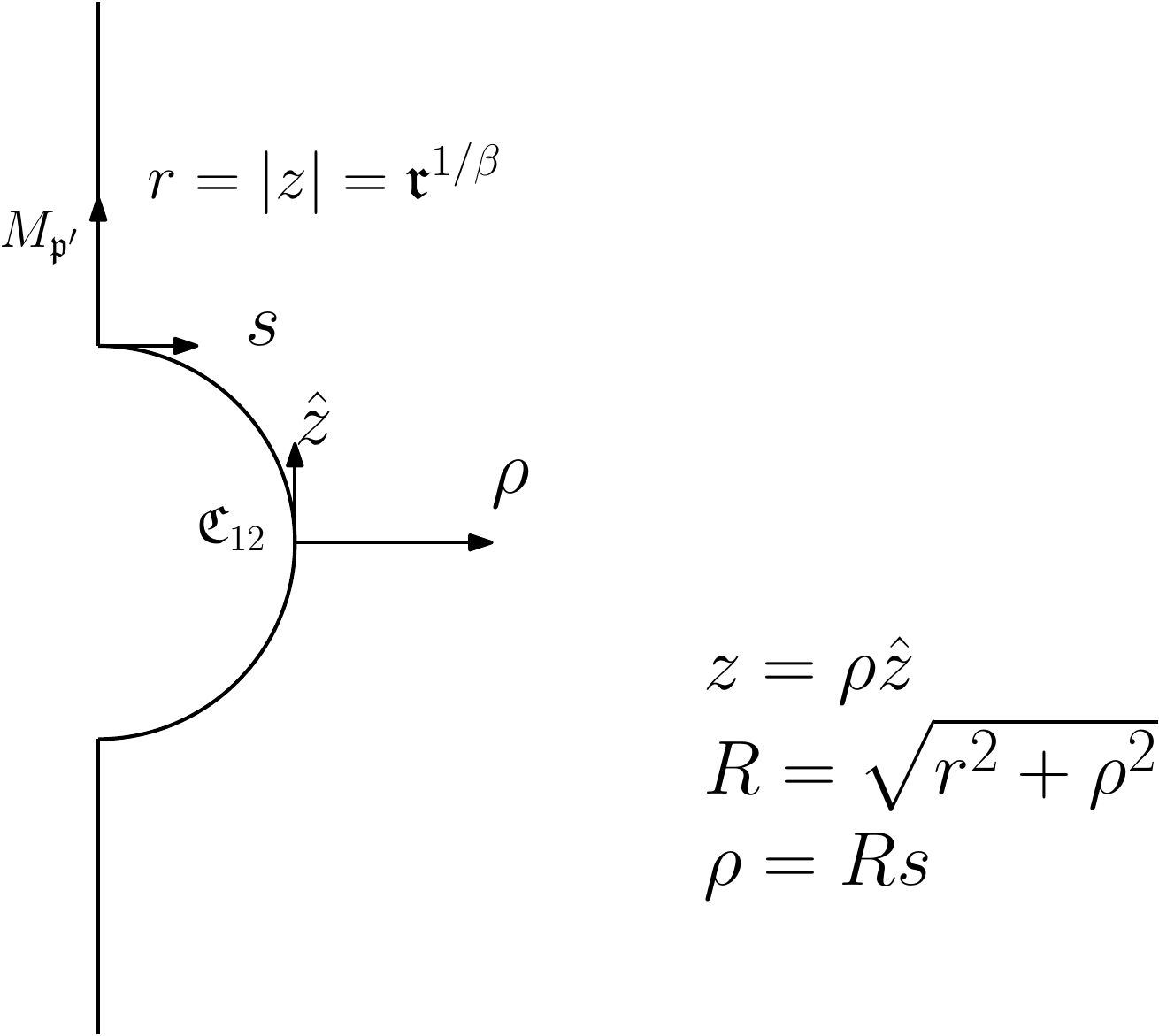}
 \caption{coordinates used in the computation of expansion near the singular fiber $\frakC_{12} \bigcup M_{\frakp'}$}
 \label{f:coordinates}
\end{figure}

Our goal is to prove that the solution $u$ to \eqref{e:hyp} is polyhomogeneous on $\calE_k$ near the point $\frakp' \in \calC_k$,
$z = 0$.  To do this we construct the solution anew (even though its existence is guaranteed by standard barrier arguments)
by first constructing an approximate solution which satisfies \eqref{e:hyp} to any fixed arbitrarily high order as $\rho \to 0$, and 
then correcting this to the exact solution using an analytic construction which guarantees that this additional correction term
also vanishes to that same high order.  

The specifics of the first part of this are that we construct the entire Taylor series for $u$ along each of the two faces $M_{\frakp'}$ 
and $\frakC_{12}$.  These series expansions are related to one another and must satisfy a set of matching conditions along
the corner where these faces intersect.  The fact that we can correct any finite part of this Taylor series to an exact solution 
with a term which vanishes to that order means that these series represent the true expansions for this exact solution. 

%We first introduce the local coordinates near face $\frakC_{12}$ as follows.  
\medskip

\noindent{\bf Expansion at $M_{\frakp'}\cap \frakC_{12}$ within $M_{\frakp'}$} 

Recall the coordinates $z = r e^{i\phi}$ and $\rho$ near $\frakC_{12}$ in $\calE_k$ (as before, an angular coordinate
is suppressed), and set
$$
r=|z|, \ \mbox{and}\ s= \rho/r.
$$
Thus $s=0$ defines the surface $M_{\frakp'}$ while $r=0$ defines $\frakC_{12}$.  There is a freedom in the conformal coordinate 
$z$ on $M_{p'}$ by  holomorphic reparametrization, and we fix this below in Lemma~\ref{l:faceI}. We also define the coordinate 
$$
\frakr=\frac{1}{\beta_{12}}|z|^{\beta_{12}}
$$
on $M_{\frakp'}$, which is the radial distance function for the background flat conic metric. 
It follows from \eqref{GnearC} that near the corner $\frakr=s=0$, 
$$
g_{0,\frakp} =\alpha(s, \frakr, \phi, \theta)\, (d\frakr^{2}+\beta_{12}^{2}\frakr^{2}d\phi^{2}),  
$$
where $\alpha(s, \frakr)$ is polyhomogeneous with $\alpha(0, \frakr) = 1$ near $\frakr = 0$.  Our first result is
that by choosing the coordinate $z$ carefully, the expansion for $\alpha$ has a particularly simple form; it simultaneously
also gives the first term in the expansion of $u$ at $M_{\frakp'}$. 
\begin{lemma}
\label{l:faceI}
There is a unique bounded solution $u_0' \in \bigcap_{m \geq 0} \calC^{m,\delta}_b(M_{\frakp'})$ to the restriction of \eqref{e:hyp} 
to $M_{\frakp'}$.  This solution is polyhomogeneous as $\frakr \to 0$, and if this defining function is chosen appropriately, then 
$$
u_{0}'\sim \sum_{j \in \NN_{0}}a_{j}\frakr^{2j}.
$$
\end{lemma}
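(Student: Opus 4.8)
The plan is to realize $u_0'$ as the conformal factor between two explicit metrics, prove existence and uniqueness by soft means, and then extract the asymptotics from the conic elliptic theory of \S\ref{s:analysis} together with the explicit flat-to-hyperbolic cone model.

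First, the restriction of \eqref{e:hyp} to $M_{\frakp'}$ is the scalar equation $\Delta_{g_{0,\frakp'}}u+e^{2u}+K_0=0$, whose solutions are precisely the conformal factors taking $g_{0,\frakp'}$ to a metric of curvature $-1$ with cone angles $\beta_{12},\beta_3,\dots,\beta_k$. By \cite{Tr,LT,Mc} there is a unique such hyperbolic conic metric $h_{0,\frakp'}$, so I would \emph{define} $u_0'=\tfrac12\log(h_{0,\frakp'}/g_{0,\frakp'})$; near each cone point both metrics are asymptotic to the flat conic model, so $u_0'$ is bounded and in fact continuous up to $M_{\frakp'}$. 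For uniqueness in $\bigcap_m\calC^{m,\delta}_b(M_{\frakp'})$: if $u_1,u_2$ are two such solutions, then $w=u_1-u_2$ satisfies $\Delta_{g_{0,\frakp'}}w+Vw=0$ with $V=\int_0^1 2e^{2(tu_1+(1-t)u_2)}\,dt>0$ bounded, and since $w$ extends continuously across the (removable) conic points, the maximum principle forces $w\equiv0$.

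For regularity, away from the cone points the equation is uniformly elliptic with smooth coefficients, so $u_0'\in\calC^\infty$ there. Near $p_{12}$, where $\chi\equiv 1$ and \eqref{e:hyp} reads $\Delta_{g_{0,\frakp'}}u_0'+e^{2u_0'}=0$, the fact that $u_0'\in\calC^0_b$ and $\Delta_{g_{0,\frakp'}}u_0'=-e^{2u_0'}\in\calC^{m,\delta}_b$ places $u_0'$ in the Friedrichs--H\"older domain of $\Delta_{g_{0,\frakp'}}$, so by \S\ref{s:analysis} one has $u_0'=a_0+\sum_{1\le j<2\beta_{12}}(a_{j1}\cos j\theta+a_{j2}\sin j\theta)\,\frakr^{j/\beta_{12}}+\tilde u$ with $\tilde u\in\frakr^2\calC^{m,\delta}_b$. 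Feeding this back into the equation and iterating --- the commutator argument of \cite{MW} applied to the conic operator $\Delta_{g_{0,\frakp'}}+2e^{2u_0'}$ --- upgrades this to a full polyhomogeneous expansion whose exponents are built from the indicial roots $\{0\}\cup\{n/\beta_{12}:n\ge 1\}$ of the linearization together with the integer powers $\frakr^{2m}$ forced by the inhomogeneity; in particular the leading term is the constant $a_0$.

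For the even normal form: $g_{0,\frakp'}$ is exactly flat near $p_{12}$, hence by the local structure of flat conic metrics is isometric there to $d\frakr^2+\beta_{12}^2\frakr^2\,d\phi^2$, and choosing the conformal coordinate $z$ so that $g_{0,\frakp'}=|z|^{2\beta_{12}-2}|dz|^2$ near $p_{12}$, i.e.\ fixing $\frakr=\tfrac1{\beta_{12}}|z|^{\beta_{12}}$, is the claimed ``appropriate choice''. The radial reduction $u''+\frakr^{-1}u'=e^{2u}$ has the explicit bounded solution $u=\log\frac{2c}{1-c^2\frakr^2}=\sum_{j\ge 0}a_j\frakr^{2j}$ (any $c>0$; this is the conformal factor carrying the flat cone to the model hyperbolic cone $d\tilde\frakr^2+\beta_{12}^2\sinh^2\tilde\frakr\,d\phi^2$ via $\tanh(\tilde\frakr/2)=c\frakr$), which lies in $\bigcap_m\calC^{m,\delta}_b$. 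By the uniqueness above it then suffices to show $h_{0,\frakp'}$ is rotationally symmetric near $p_{12}$ in this coordinate; I would deduce this from the local rigidity of hyperbolic conic metrics (each is locally the model hyperbolic cone) plus the fact that the two rotational symmetries must coincide --- equivalently, that every non-radial harmonic mode $\frakr^{n/\beta_{12}}e^{\pm in\phi}$ in the expansion above can be removed by a holomorphic reparametrization of $z$, after which the nonlinearity regenerates no non-radial mode and the surviving radial series solves the ODE just displayed. I expect this last point --- that the entire non-radial (and half-integer) part of the expansion is killed by a single holomorphic coordinate choice --- to be the main obstacle; the commutator bootstrap and the soft existence/uniqueness are routine adaptations of \cite{MW}, and this lemma serves as the base case for the induction of \S\ref{ss:hyp2}.
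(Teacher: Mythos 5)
Your existence, uniqueness and polyhomogeneity steps are fine and essentially coincide with the paper's treatment, which simply quotes \cite{Mc} for the barrier construction and uniqueness among bounded solutions and \cite{MW} for the refined conic regularity; your maximum-principle and Friedrichs-domain bootstrap are routine variants of that. The problem is that the only genuinely new content of the lemma is the final normal form $u_0'\sim\sum a_j\frakr^{2j}$, and there you stop: you explicitly defer the statement that ``the two rotational symmetries must coincide,'' equivalently that the entire non-radial part of the expansion is killed by one holomorphic choice of $z$, and you call it the expected main obstacle. As written, then, the proposal does not prove the lemma. Moreover the fallback mechanism you sketch --- remove the harmonic modes $\frakr^{n/\beta_{12}}e^{\pm in\phi}$ by reparametrization and argue that the nonlinearity ``regenerates no non-radial mode'' --- is not the right picture: for a fixed background $g_{0,\frakp'}$ the solution $u_0'$ is a fixed function, and a change of defining function alone rescales the coefficient of $\frakr^{1/\beta_{12}}$ by a positive factor but can never annihilate it; the coordinate normalization has to be built into the choice of the flat model near $p_{12}$ (i.e.\ into the background entering \eqref{e:hyp}) before one speaks of the expansion, not applied a posteriori, and no inductive mode-by-mode argument is involved.

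The paper closes this step in one stroke using exactly the rigidity you mention but do not exploit: near the cone point the hyperbolic metric is \emph{exactly} the model $d\tilde r^{2}+\beta_{12}^{2}\sinh^{2}\tilde r\,d\phi^{2}$ of \eqref{e:sinh} in geodesic polar coordinates, and since an isometry of surfaces is conformal, this model structure is realized in a local holomorphic coordinate $z$ centered at $p_{12}$; fixing $z$ (hence $\frakr=\tfrac1{\beta_{12}}|z|^{\beta_{12}}$ and the flat model $|z|^{2(\beta_{12}-1)}|dz|^{2}$) compatibly with this structure is precisely the ``appropriate choice'' in the statement, and it kills the whole non-radial and non-even part simultaneously rather than order by order. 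What remains is the one-dimensional computation you already wrote in a different form: $d\tilde r/d\frakr=e^{u_0'}$ and $\sinh\tilde r=e^{u_0'}\frakr$ give $d\tilde r/\sinh\tilde r=d\frakr/\frakr$, hence $\tanh(\tilde r/2)=c\,\frakr$; after scaling so $c=\tfrac12$, $\tilde r$ is an odd function of $\frakr$, so $e^{u_0'}=\sinh\tilde r/\frakr$ is even in $\frakr$ and equals $1$ at $\frakr=0$, which is the asserted expansion with constant coefficients. So the missing idea is not a delicate normal-form iteration but the observation that local rigidity plus conformality of isometries lets you adapt the conformal coordinate (and with it the flat reference cone) to the hyperbolic metric once and for all, reducing the claim to the explicit $\tanh(\tilde r/2)=c\frakr$ relation.
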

\begin{proof}
All of this except the last assertion, i.e., the existence and polyhomogeneous regularity, is contained in \cite{Mc} and \cite{MW}.
For simplicity set $\beta=\beta_{12}$. Existence and uniqueness of the solution is proved in \cite{Mc} by constructing bounded 
sub- and supersolutions, and this method also leads to  the uniqueness of $u_0'$ amongst bounded solutions. The regularity theorem
is proved in \cite{MW}. First, (scale-invariant) local elliptic regularity shows that $u_0' \in \calC^{m,\delta}_b$ for every $m \geq 0$.  
The refined regularity theorem in \S 3 of that paper states that $u_0'$ is polyhomogeneous, with
$u_0' \sim \sum_{\ell, j \geq 0} a_{\ell j}(\phi) \frakr^{\ell/\beta + j}$ as $\frakr \rightarrow 0$, where $a_{\ell  j}$ is linear in 
$\cos \ell \phi$, $\sin \ell \phi$. 

It remains to prove the last assertion, that all $a_{\ell j} = 0$ when $\ell \neq 0$ in some choice of coordinates.
To this end, recall that near a cone point, there are geodesic coordinates $(\tilde r, \theta)$ in terms of which the hyperbolic metric
takes the canonical polar form
\begin{equation}\label{e:sinh}
g=d\tilde r^{2} + \beta^{2} \sinh^{2} \tilde r d\phi^{2}.
\end{equation}
On the other hand, there is a local holomorphic coordinate $z$ centered at the conical point for which, in the region near $p_{12}$ in
which it is flat, $g_{0,\frakp} = |z|^{2(\beta-1)}|dz|^2$, so that 
$$
g=e^{2u_{0}'} |z|^{2(\beta-1)} |dz|^{2}
$$
there. 
Since $\frac{1}{\beta}|z|^{\beta}=\frakr$, we have
$$
d\tilde r^{2} + \beta^{2} \sinh^{2} \tilde r d\phi^{2}=e^{2u_{0}'(z)} |z|^{2(\beta-1)} |dz|^{2}=e^{2u_{0}'(z)} (d\frakr^{2}+\beta^{2} \frakr^{2}d\phi^{2}).
$$
This gives
$$
\frac{d\tilde r}{d\frakr }=e^{u_{0}'}, \ \sinh\tilde r=e^{u_{0}'} \frakr,
$$
or equivalently 
$$
\frac{d\tilde r}{\sinh \tilde r} = \frac{d\frakr}{\frakr}, 
$$
and hence 
$$
\tanh \frac{\tilde r}{2} = c\, \frakr. 
$$
We finally scale $z$ so that $c = 1/2$. 

We have now shown that
$$
\tilde r \sim \frakr\left(1 + \sum_{j=1}^{\infty}\tilde a_{j}\frakr^{2j}\right), 
$$
i.e., $\tilde r$ is an odd function of $\frakr$, so that $e^{u_{0}'}= \frakr^{-1} \sinh \tilde r$ is even in $\frakr$
and equals $1$ when $\frakr=0$. We conclude that $u_{0}'$ is even and vanishes at $\frakr=0$, hence
\begin{equation}
u_{0}' \sim \sum_{j=1}^{\infty} \tilde a_{0j} \frakr^{2j} = \sum_{j\geq 1} a_{0j} r^{2j\beta}
\label{e:u0}
\end{equation}
where each $a_{0j}$ is constant. 
\end{proof}

\medskip

\noindent{\bf Expansion at $M_{\frakp'}$}
We next turn to the complete expansion of $u$ at the face $M_{p'}$.   The defining function for this face is $s$, and
we know a priori that $u$ is smooth in $s$ at $s=0$, hence 
\begin{equation}
u \sim \sum_{j=0}^\infty \tilde{u}'_j(r,\phi) s^j.
\label{mps}
\end{equation}
Our goal is to compute these coefficients $\tilde{u}'_j$, and more specifically, to understand their expansions
as $r \to 0$.  In doing this, it is more convenient to write \eqref{mps} as an expansion in $\rho$ since
the Laplacian on the fibers commutes with $\rho$. Recall that near the corner $M_{\frakp'} \cap \frakC_{12}$, 
we have $\rho = R \sin \omega$, so if we set $s = \sin \omega$, then $s = \rho/R$. Furthermore, 
along $s=0$, we can take $R = r$. Therefore, 
\[
u \sim \sum_{j=0}^\infty \rho^j u_j', \ \ \mbox{where}\ u_j' = r^{-j} \tilde{u}'_j.
\]

For simplicity of notation, we assume here $2k\beta\notin \NN$ for any $k\in \NN$. The other case has no essential 
difference except the notation. In particular, the expansions below are the same, and for some values $\ell\in \NN$, 
the coefficients of $r^{\ell}$ appear in more than one place because of the coincidence $\ell'+2k\beta=\ell'$. 
\begin{proposition}\label{p:Mp}
As $s \to 0$, there is an expansion 
\begin{equation}\label{e:uj}
u\sim \tilde{u}_{0}'+\sum_{j=1}^{\infty} s^j \tilde{u}_j', 
\end{equation}
where, in terms of the functions $u_j'$ for $j\geq 1$, 
\begin{equation}\label{e:uj1}
u_{j}'\sim \sum_{\ell\in \NN} r^{\ell} a_{j\ell 0}(\phi) + \sum_{\ell, k \in \NN, \ell\geq 0, k\geq 1} r^{\ell+2k \beta} a_{j\ell k}(\phi).
\end{equation}
or equivalently,
\begin{equation}\label{e:uj11}
\tilde{u}_{j}'\sim \sum_{\ell\in \NN} r^{j+\ell} a_{j\ell 0}(\phi) + \sum_{\ell, k \in \NN, \ell\geq 0, k\geq 1} r^{j+\ell+2k \beta} a_{j\ell k}(\phi)
\end{equation}
Here $a_{j\ell 0}$ are trigonometric polynomials of pure degree $\ell$, i.e., linear combinations of 
$\cos(\ell\phi)$ and $\sin(\ell\phi)$, while $a_{j\ell k}$ $(\ell\geq 0, k\geq 1)$ are trigonometric polynomials of degree at most $\ell$.
In particular, 
\begin{equation}\label{e:u1}
\tilde{u}_{1}'\sim \sum_{\ell\in \NN} r^{1+\ell} a_{1\ell 0}(\phi) + \sum_{\ell, k \in \NN, \ell\geq 0, k\geq 1} r^{1+\ell+2k \beta} a_{1\ell k}(\phi).
\end{equation}
And $a_{1\ell k}$ is a linear combinations of 
$\cos(\ell\phi)$ and $\sin(\ell\phi)$ for any $\ell\geq 0, k\geq 0$.
\end{proposition}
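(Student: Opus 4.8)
The plan is to compute the Taylor coefficients $\tilde u'_j$ (equivalently $u'_j=r^{-j}\tilde u'_j$) recursively from the curvature equation, and then to read off their behaviour as $r\to 0$ from the indicial structure of a conic operator on $M_{\frakp'}$. Near the corner $M_{\frakp'}\cap\frakC_{12}$ we have $\chi\equiv 1$, so \eqref{e:hyp} is the Liouville equation \eqref{e:hyp0}; writing $g_{0,\frakp}=e^{2G}|dz|^2$ in the distinguished holomorphic coordinate of Lemma~\ref{l:faceI} it becomes $\Delta_0 u+e^{2(G+u)}=0$, where $\Delta_0$ is the flat Laplacian in $z$, which commutes with multiplication by $\rho$. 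From \S\ref{ss:flt2} one has the explicit $\rho$-expansion $G=(\beta-1)\log r+\sum_{m\ge 1}\rho^m G_m$ with $G_m=g_m(\phi)r^{-m}+\psi_m(z)$, $g_m$ a constant multiple of $\cos m\phi$ and $\psi_m$ the restriction of a harmonic (hence polyhomogeneous, pure-mode) function. Substituting $u=\sum_{j\ge 0}\rho^j u'_j$ and collecting powers of $\rho$: at order $\rho^0$ one recovers the equation on $M_{\frakp'}$ of Lemma~\ref{l:faceI}, giving $u'_0=\tilde u'_0$ radial near $p_{12}$ with $u'_0\sim\sum_{\nu\ge 0}a_\nu\frakr^{2\nu}$; at order $\rho^j$, $j\ge 1$, one gets a linear equation
\[
(\Delta_h+2)\,u'_j=\tilde F_j ,
\]
where $h=e^{2u'_0}g_{0,\frakp}|_{s=0}$ is the limiting hyperbolic cone metric, so the linearization $\Delta_{g_{0,\frakp}|_{s=0}}+2e^{2u'_0}$ factors as $e^{2u'_0}(\Delta_h+2)$, and $\tilde F_j=-2G_j-\tilde N_j$ with $\tilde N_j=\sum_{n\ge 2}\tfrac{2^n}{n!}\sum_{m_1+\dots+m_n=j}\prod_i(G_{m_i}+u'_{m_i})$ a universal polynomial in the lower-order coefficients coming from Taylor expanding the exponential.

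\textbf{The induction.} Assuming $u'_0,\dots,u'_{j-1}$ have the asserted form, I would first show $\tilde F_j$ is polyhomogeneous as $r\to 0$, expanding in exponents $r^{\gamma}$ with $\gamma\in\{-j,-j+1,\dots\}$ together with their $\beta$-shifts $r^{\gamma+2k\beta}$, tracking at each order which trigonometric polynomials can occur; the bookkeeping rests on the fact that a product of a pure mode-$m$ term and a mode-$n$ term contributes only modes $|m\pm n|$. I would then invert $\Delta_h+2$ using \S\ref{ss:holder}--\S\ref{ss:conicop}: it is a positive, hence (on the Friedrichs domain) invertible, second-order conic elliptic operator, and --- crucially, because the coordinate of Lemma~\ref{l:faceI} makes $h$ exactly $e^{2u'_0}r^{2\beta-2}|dz|^2$ with $u'_0$ radial near $p_{12}$ --- it sends $r^{\mu}e^{id\phi}$ to a multiple of $r^{\mu-2\beta}e^{id\phi}$ plus lower-order radial corrections, preserving Fourier modes there to all orders. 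Consequently the indicial roots at $p_{12}$ are the integers $\pm d$ in the mode-$d$ channel, so the homogeneous (indicial) content of $u'_j$ contributes precisely the pure degree-$\ell$ terms $r^{\ell}a_{j\ell 0}(\phi)$, while each forcing term of exponent $\gamma$ produces a particular solution starting at $r^{\gamma+2\beta}$ in the same channel, generating the family $r^{\ell+2k\beta}a_{j\ell k}(\phi)$ with $a_{j\ell k}$ of degree at most $\ell$; no logarithms appear under the standing assumption $2k\beta\notin\NN$, the general case differing only in notation.

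\textbf{A delicate point.} The most singular part of the forcing, namely $-2g_j r^{-j}$ together with the order-$r^{-j}$ part of $-\tilde N_j$, must be handled separately: since $-j$ is an indicial root of $\Delta_h+2$ in the mode-$j$ channel and $\Delta_0(r^{-j}\cos j\phi)=0$ identically, this contribution is resonant, and one must check that it is exactly the term already carried by $G_j$ in $G_j+u'_j$, so that $u'_j$ itself is regular --- i.e. that the would-be singular indicial modes of the fibre operator are absorbed into the background conformal factor $G$ rather than into $u$. One must also verify that the resulting expansion is compatible with the independently derived corner expansion of Lemma~\ref{l:faceI} (its powers $\frakr^{2\nu}\sim r^{2\nu\beta}$ matching the $r^{2k\beta}$ here), which is where global solvability of $\Delta_h+2$ on all of $M_{\frakp'}$, rather than merely the formal local solve at $p_{12}$, is used.

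\textbf{Main obstacle.} The step I expect to be hardest is the combinatorial control of the trigonometric polynomials $a_{j\ell k}$: showing that the degree bound ``$\le\ell$'' survives the full nonlinear cascade --- all products in $\tilde N_j$ composed with the iterated conic solve --- since, for instance, the product of a singular factor $g_m r^{-m}$ (pure mode $m$) with a regular factor of mode $n$ can a priori raise the Fourier degree above the corresponding power of $r$. Controlling this cascade is precisely what forces the careful choice of the defining coordinate in Lemma~\ref{l:faceI} and the organization of the entire expansion around the exponents $\ell$ and $\ell+2k\beta$.
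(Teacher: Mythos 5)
Your overall route is the same as the paper's: expand $u$ in powers of $\rho$ at $M_{\frakp'}$, solve the resulting recursion of linear conic equations on $M_{\frakp'}$ (the linearization being $e^{2u_0'}(\Delta_h+2)$, with global solvability fixing the indicial coefficients), and read off the $r\to 0$ structure from the indicial roots together with the $r^{2\beta}$ shift, controlling the $a_{j\ell k}$ by products of trigonometric polynomials. The genuine gap is at your ``delicate point'', which is exactly where your recursion differs from the paper's. You retain the forcing terms $-2G_j$ coming from the $\rho$-expansion of the background conformal factor: from \eqref{GnearC}, near the corner $G_j=g_j(\phi)r^{-j}$ with $g_j$ a pure degree-$j$ multiple of $\cos j(\phi-\theta)$, e.g.\ $g_1=(\beta_2-\beta_1)\cos(\phi-\theta)$ and $g_2=-\tfrac12(\beta_{12}-1)\cos 2(\phi-\theta)$. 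You then assert that this most singular forcing is resonant and that it should be ``absorbed into $G$'' so that $u_j'$ keeps the form \eqref{e:uj1}; neither claim is argued, and the first is false. After multiplying by $r^{2\beta}$ as in \eqref{e:shift}, this forcing sits at exponent $2\beta-j$ in the mode-$j$ channel, and under the standing assumption $2k\beta\notin\NN$ one has $2\beta-j\neq\pm j$: the term is non-resonant (so no logarithm), and it cannot be cancelled by any choice of the homogeneous indicial part. Solving it away necessarily produces a particular solution $c\,r^{2\beta-j}$ times a pure degree-$j$ trigonometric polynomial in $u_j'$, with $c\propto g_j/(4\beta(\beta-j))$, which is nonzero already for $j=1$ whenever $\beta_1\neq\beta_2$ and for $j=2$ for every admissible $\beta$; this contributes a term $r^{2\beta}$ to $\tilde u_j'$, which lies outside the index family \eqref{e:uj1}--\eqref{e:u1} you are trying to establish, and your proposal gives no mechanism for removing or reinterpreting it.

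By contrast, in the paper's recursion \eqref{e:ind} no such forcing appears: the only $\rho$-dependence of the data retained there enters through the curvature coefficients $K_j$, which vanish identically near the corner, so the right-hand side of \eqref{e:shift} never contains exponents below $2\beta$ and the stated index sets follow by the matching-of-coefficients argument. Your recursion (which expands the $\rho$-dependence of $G$ inside the exponential) and the paper's are therefore not the same, and the step you defer --- verifying that the $G_j$-generated modes are harmlessly absorbed rather than appearing in $u_j'$ --- is precisely the missing argument. As written, the proposed proof does not close: either you must show these terms cancel (the local indicial computation above shows they do not), or you must reorganize the expansion relative to the limiting conic metric, as the paper's recursion effectively does, so that they are never produced.
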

\begin{proof}
Clearly $u$ is $\calC^\infty$ up to $M_{\frakp'}$ away from $z=0$, and thus has an expansion in nonnegative integer powers of $\rho$.
Expand $K_{g_{0,\frakp}}=\sum_{j=0}^{\infty} \rho^{j}K_{j}$ in \eqref{e:hyp}
and insert a formal series expansion for $u$, as in the statement of this theorem.  Notice here $K_{j}\equiv 0$ near $\frakC_{12}$. Since
$\Delta_{g_{0,\frakp}}$ commutes with $\rho$ away from $\frakC_{12}$, we obtain a recursive set of equations 
which successively determine all of the $u_j'$. The first of these is the curvature equation
$$
\Delta_{g_{0,\frakp}}u_{0}'+e^{2u_{0}'}+K_{0}=0,
$$
on $M_{\frakp'}$. By the previous lemma, $u_0'$ has an expansion involving only the powers $r^{2k\beta}, k\in \NN$. 

The equation for $u_j'$, $j \geq 1$,  is 
\begin{equation}\label{e:ind}
e^{-2u_{0}'}\Delta_{g_{0,\frakp}} u_{j}'+ 2 u_{j}' =  - e^{-2u_{0'}}\left(K_{j} + \left(e^{2u^{(j-1)}}-1-2 u^{(j-1)}\right)_{\!  j}\right), 
\end{equation}
where $u^{(j-1)} = \sum_{i=0}^{j-1}\rho^{i}u_{i}'$ and the notation $(w)_{j}$ means that we take the coefficient of $\rho^{j}$ in the 
expansion of $w$.  This is the Laplacian with positive spectrum, so this equation always admits a solution on $M_{\frakp'}$.
Using the special coordinates $(r,\phi)$ from Lemma~\ref{l:faceI} near $r=0$, this equation reduces in a neighborhood of $r=0$ to 
\begin{equation}
(\Delta_{2\beta} + 2e^{2u_0'} )u_j' = -\left(e^{2u^{(j-1)}}-1-2 u^{(j-1)}\right)_{\!  j}
\label{e:indnext}
\end{equation}
where $\Delta_{2\beta}$ is the Laplacian for the conic metric $|z|^{2(\beta-1)}|dz|^2$ where $z=re^{i\phi}$ (which gives the local form of $g_{0,\frakp}$). After multiplying by $r^{2\beta}$, the equation above can be rewritten as
\begin{equation}\label{e:shift}
\left( (r\partial_{r})^{2}+\partial_{\phi}^{2} \right)u_{j}'+2r^{2\beta}e^{2u_{0}'}u_{j}'=-r^{2\beta} \left(e^{2u^{(j-1)}}-1-2 u^{(j-1)}\right)_{\!  j}
\end{equation}
The bounded formal solutions (i.e., solutions 
to leading order) of 
$$\left( (r\partial_{r})^{2}+\partial_{\phi}^{2} \right)u_{j}'+2r^{2\beta}e^{2u_{0}'}u_{j}'=0$$
are $r^{\ell} q_\ell(\phi)$, $\ell \in \NN$, where 
$q$ is a trigonometric polynomial of pure degree $\ell$. The nonnegative indicial roots are $\ell = 0, 1, 2, \ldots $. 

For $j=1$, \eqref{e:indnext} becomes
\begin{equation}
\left( (r\partial_{r})^{2}+\partial_{\phi}^{2} \right)u_{1}'+2r^{2\beta}e^{2u_{0}'}u_{1}'=-r^{2\beta}K_1,
\end{equation} 
note here  $(e^{2u^{(0)}}-1-2 u^{(0)})_{\!  1}= (e^{2u_0'} - 1 - 2u_0')_{\!  1}=0$.
Noting that $K_1$ vanishes near $\frakC_{12}$ and $e^{2u_{0}'}\sim 1+\sum_{k\geq 1} a_{k}r^{2k\beta}$, the solution of this equation has an expansion of the form
$$
u_{1}'\sim \sum_{\ell\in \NN} r^{\ell} a_{1\ell 0}(\phi) + \sum_{\ell, k \in \NN, \ell\geq 0, k\geq 1} r^{\ell+2k \beta} a_{1\ell k}(\phi).
$$
Here the terms $a_{1\ell 0}$ come from indicial roots, and are formally undetermined near $r=0$, but of course are fixed because $u_1'$ solves
a global equation on $M_{\frakp'}$. And each $a_{1\ell 0}$ is a linear combination of $\cos(\ell\phi)$ and $\sin(\ell\phi)$.  All the other terms arise by matching coefficients on the two sides
of this equation. In particular, because of the multiplication by $r^{2\beta}$,  the leading term in the second sum is given by $r^{2\beta}$ and there are no log terms. And each $a_{1\ell k}(\phi)$ is of pure degree $\ell$.

We now prove by induction that the expansion of $u_j'$, $j > 1$, is as in~\eqref{e:uj1}.   The guiding principle in all of
this is that the right hand side of~\eqref{e:shift} does not contain any indicial term of the linear operator. In particular, the right hand side has an expansion where  terms are given by $r^{2\beta+\ell+2k\beta}a_{\ell k}, \ell\geq 0, k\geq 0$ and its coefficient $a_{\ell k}$ is a trigonometric polynomial of degree at most $\ell$. 

The equation for $u_{2}'$ is 
\begin{equation}
\left( (r\partial_{r})^{2}+\partial_{\phi}^{2} \right)u_{2}'+2r^{2\beta}e^{2u_{0}'}u_{2}'=-r^{2\beta}\left(K_2+2(u_{1}')^{2}\right),
\label{u2p}
\end{equation}
and this right hand side has an expansion 
\begin{align}\label{e:u2rhs}
r^{2\beta}\big(\sum_{\ell\geq 0}\sum_{\ell'=0}^{\ell} (c_{\ell \ell'} \cos (\ell' \phi) + d_{\ell  \ell'} \sin(\ell' \phi) )r^{\ell}\\+\sum_{\ell\geq 0, k \geq 1} \sum_{\ell'=0}^\ell (c_{\ell k \ell'} \cos (\ell' \phi) + d_{\ell k \ell'} \sin(\ell' \phi) ) r^{\ell + 2k\beta}\big)
\end{align}
Indeed, the expansion of $r^{2\beta}(u_{1}')^{2}$ contains terms
\begin{equation}\label{e:ujrhs}
r^{\ell+2\beta}q_{\ell}(\phi), \ r^{\ell+2(k+1)\beta}p_{\ell k}(\phi), k\geq 0.
\end{equation}
The coefficient $q_{\ell}, p_{\ell k}$ are finite sum of finite products of trigonometric polynomials $\prod q_j$, with 
$\deg q_j = \ell_j$ and $\sum \ell_j = \ell$. Even if each $q_j$ is pure, this product usually includes all lower degrees as well. Therefore all $q_{\ell}, p_{\ell k}$ are trigonometric polynomials of degree at most $\ell$.  
Since $K_{2}$ vanishes identically near $r=0$, we can see that the right hand side does not contain any indicial terms $r^{\ell}e^{i\ell\phi}$ because of the $r^{2\beta}$ shift, which implies that the solution $u_{2}'$ does not contain any $\log r$ terms. Solving \eqref{u2p} term-by-term gives
\begin{equation}\label{e:u2}
u_{2}'\sim \sum_{\ell\in \NN} r^{\ell} a_{2\ell 0}(\phi) + \sum_{\ell, k \in \NN, \ell\geq 0, k\geq 1} r^{\ell+2k \beta} a_{2\ell k}(\phi).
\end{equation}
Here the first term contains indicial roots where $a_{2\ell 0}(\phi)$ is a linear combination of $\cos(\ell \phi)$ and $\sin (\ell \phi)$, 
while the second term comes from matching coefficients on two sides and $a_{2\ell k}$ is of degree at most $\ell$. 

Now suppose~\eqref{e:uj1} is true for $u_i'$, $i < j$. Then the terms on the right hand side of \eqref{e:ind} are linear combinations of terms
$
\prod_{\sum i=j} u_{i}'.
$
By tracking the terms in~\eqref{e:uj1},  we obtain that the right hand side 
is a linear combination of terms in~\eqref{e:ujrhs}.
Applying the same guiding
principle, we see that $u_j'$ has an expansion as in \eqref{e:uj1}.
By induction, this concludes the proof of the proposition. 
\end{proof}

\medskip

\noindent{\bf Expansion at $\frakC_{12}$}
We next consider the expansion at $\frakC_{12}$.  Unlike the preceding construction at $M_{\frakp'}$, some
terms in this expansion can only be determined once we take into account their compatibility with
the previous expansion. Write 
\begin{equation}
u\sim \sum_{\alpha \in \calE} R^{\alpha}  \tilde{u}''_{\alpha}(s,\phi)
\label{expC12}
\end{equation}
near this face, where $\calE$ is an index set which is determined in the course of the argument below, see \eqref{index}. 
The double prime indicates that the terms are coefficients in the expansion near $\frakC_{12}$. 
As usual, $R = \sqrt{\rho^{2}+r^{2}}$, $R \sin \omega = \rho$, and as before, we set $s = \sin \omega$.
(This is a good coordinate away from the pole of this hemisphere.)  For many purposes it is simpler to use the 
projective coordinates $\wh{z}=z/\rho$ and $\rho$, which are valid on the interior of $\frakC_{12}$; 
$|\wh z| \to \infty$ at the outer boundary of this face, so $s \sim 1/|\wh z|$, and $\rho$ is only a defining function 
for this face away from its outer boundary. See Figure~\ref{f:coordinates} for an illustration of the coordinates.

In these projective coordinates, still writing $\beta = \beta_{12}$,
\begin{equation}\label{rescnear12}
g_{0,\frakp} = \rho^{2\beta} e^{2\hat{u}} |d\wh z|^2,\ \ \hat{u} = (\beta_1-1) \log |\wh z-1| + (\beta_2-1) \log |\wh z+1| + \tilde{u},
\end{equation}
where $\tilde{u}$ is harmonic as a function of $\wh z$, so in particular is smooth across the singular points $\wh z = \pm 1$ 
in this face. It is also bounded in a neighborhood of $\frakC_{12}$, so in fact its restriction to $\frakC_{12}$ must be constant. 
In other words, $\rho^{-2\beta} g_{0,\frakp}$ restricts to a flat metric $\hat{g}$ on the interior of $\frakC_{12}$ 
which has two conic singularities at $w = \pm 1$ and is asymptotic to the large end of a cone with cone angle 
$2\pi \beta$ as $\wh z \to \infty$. 

Now write $R^{\alpha}\tilde u''_{\alpha}=\rho^{\alpha}s^{-\alpha}\tilde u''_{\alpha}=\rho^{\alpha}u''_{\alpha}$ where $u''_{\alpha}(s,\phi)
=s^{-\alpha}\tilde u''_{\alpha}$. Using that the fiber Laplacian commutes with fiber variable $\rho$, i.e. $[\Delta_{g_{0,\frakp}}, \rho]=0$,
the curvature equation thus leads to equations for each of these coefficients: 
\begin{equation}
\wh{\Delta} u_{\alpha}'' = - 2u_{\alpha - 2\beta}'' - ( e^{2 u^{(\alpha-2\beta)}} -1 - 2u^{(\alpha-2\beta)})_{\alpha - 2\beta},
\label{cce}
\end{equation}
where here and below, we denote by $\wh{\Delta}$ the Laplacian for the flat conic metric $\hat{g}=\rho^{-2\beta}g_{0,\frakp}$. 
Also, analogous to our previous notation, $u^{(\alpha-2\beta)}$ denotes the sum of all terms $\rho^{\alpha'}u_{\alpha'}''$ with 
$\alpha'<\alpha-2\beta$, and  $(\cdot )_{\alpha - 2\beta}$ indicates the coefficient of $\rho^{\alpha - 2\beta}$ in the expansion in parentheses.  
The downward shift by $2\beta$ occurs because of the factor $\rho^{-2\beta}$ on $\Delta_{g_{0, \frakp}}$. 

We now analyze these coefficients.  The shift of exponents here motivates the fact that we carry this
out for $\alpha$ in the succession of ranges
\[
2\ell \beta < \alpha < 2(\ell+1) \beta,  \ \ \ell = 0, 1, 2, \ldots;
\]
the endpoints $2\ell \beta$ are handled separately. 

\medskip

\noindent{\bf The case $\alpha \in (0, 2\beta )$:} 

\begin{lemma}
The  only terms in \eqref{expC12} with $\alpha\in (0,2\beta)$ are those for which $\alpha \in \{ 1, 2, \ldots, [2\beta] \}$.
And the term $u_{\alpha}''$ is determined by $\{a_{(\alpha-j)j0}(\phi), 0\leq j\leq \alpha-1\}$ in~\eqref{e:uj1}.
\end{lemma}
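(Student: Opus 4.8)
The plan is to pin down the coefficients $u''_\alpha$ in \eqref{expC12} by combining the recursion \eqref{cce} with the requirement that the expansion at $\frakC_{12}$ be compatible, along the corner $M_{\frakp'}\cap\frakC_{12}$, with the expansion \eqref{e:uj}--\eqref{e:uj1} at $M_{\frakp'}$ established in Proposition~\ref{p:Mp}. The first observation I would make is that for $\alpha\in(0,2\beta)$ the shift in \eqref{cce} lands at a negative index, so its right-hand side is empty: there are no terms $u''_{\alpha'}$ with $\alpha'<0$ (the expansion \eqref{expC12} starts at $\alpha=0$, with $u''_0$ bounded), and the nonlinear contribution $(e^{2u^{(\alpha-2\beta)}}-1-2u^{(\alpha-2\beta)})_{\alpha-2\beta}$ vanishes for the same reason. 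Hence $\wh\Delta u''_\alpha=0$, i.e.\ $u''_\alpha$ is harmonic for the flat conic metric $\wh g=\rho^{-2\beta}g_{0,\frakp}|_{\frakC_{12}}$ of \eqref{rescnear12}; since harmonicity in two dimensions is conformally invariant, this just means $u''_\alpha$ is a harmonic function of the projective coordinate $\wh z=z/\rho$ on the interior of $\frakC_{12}$, with possible singular behaviour only at $\wh z=\pm1$ and at the corner $|\wh z|\to\infty$, constrained by the finite $b$-regularity of $u$.

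For the first assertion, I would rewrite the $M_{\frakp'}$ expansion near the corner in the coordinates $(\rho,\wh z)$, in which $r=|z|=\rho|\wh z|$: the term $\rho^j r^\ell a_{j\ell0}(\phi)$ becomes $\rho^{j+\ell}|\wh z|^\ell a_{j\ell0}(\arg\wh z)$, the term $\rho^j r^{\ell+2k\beta}a_{j\ell k}(\phi)$ becomes $\rho^{j+\ell+2k\beta}|\wh z|^{\ell+2k\beta}a_{j\ell k}(\arg\wh z)$, and $\tilde u'_0=u_0'$ contributes only powers $\rho^{2m\beta}$ by \eqref{e:u0}. Collecting powers of $\rho$, every exponent that appears is either a non-negative integer or of the form (non-negative integer)$+2k\beta\geq2\beta$ with $k\geq1$; under the genericity hypothesis $2k\beta\notin\NN$ the latter are never integers, so the only exponents lying in $(0,2\beta)$ are the integers $1,\dots,[2\beta]$. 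This proves $\calE\cap(0,2\beta)\subseteq\{1,\dots,[2\beta]\}$.

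For the second assertion, fix $\alpha=n\in\{1,\dots,[2\beta]\}$. Extracting the coefficient of $\rho^n$ from the rewritten $M_{\frakp'}$ expansion selects exactly the terms with $j+\ell=n$ coming from the indicial parts $r^\ell a_{j\ell0}(\phi)$ of $u'_j$ (no contribution from the $r^{\ell+2k\beta}$ parts or from $u_0'$, by genericity); since each $a_{j\ell0}$ is a pure-degree-$\ell$ trigonometric polynomial, $|\wh z|^\ell a_{j\ell0}(\arg\wh z)=\mathrm{Re}(c\,\wh z^\ell)$, so this coefficient is $\mathrm{Re}(P_n(\wh z))$ for a polynomial $P_n$ of degree $\leq n-1$ whose coefficients are linear in $\{a_{(n-j)j0}:0\le j\le n-1\}$ — the $j=0$ (i.e.\ $\rho$-exponent $n$, $r$-exponent $n$) term is absent because it is the coefficient of $r^n$ in $u_0'$, which vanishes by \eqref{e:u0} and genericity. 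This fixes the growing-plus-constant part of the expansion of $u''_n$ at the corner. To conclude that $u''_n$ itself is determined I would argue that any further mode of $u''_n$ at $|\wh z|\to\infty$ must be a decaying indicial mode $|\wh z|^{-m}(\cdots)=\rho^m r^{-m}(\cdots)$ with $m\geq1$; matching it back against \eqref{e:uj} would require an $r^{-m}$ term in $u'_{n+m}$, contradicting Proposition~\ref{p:Mp}. Hence $v:=u''_n-\mathrm{Re}(P_n(\wh z))$ is $\wh\Delta$-harmonic on $\frakC_{12}$, bounded near $\wh z=\pm1$, and $o(1)$ as $|\wh z|\to\infty$; bounded harmonic functions near a conic point of angle $2\pi\beta_i$ are precisely the analytic germs $\sum_{m\geq0}c_m(\wh z\mp1)^m$, so $v$ extends to a bounded harmonic function on $\CC$ vanishing at infinity, whence $v\equiv0$ by the maximum principle and $u''_n=\mathrm{Re}(P_n(\wh z))$. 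Applying the same reasoning to a non-integer $\alpha\in(0,2\beta)$ yields zero matching data and no decaying modes, so $u''_\alpha\equiv0$, re-confirming the first assertion.

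The step I expect to be the main obstacle is the uniqueness argument: one must verify that the conic points $\wh z=\pm1$ (around which $\calC_k$ is further resolved) contribute no bounded harmonic modes beyond the analytic germs, and that the matching of the two polyhomogeneous expansions along the corner is legitimate; the latter is safe because both expansions describe the single conormal function $u$, and the former reduces to the elementary description of harmonic functions on a flat cone. The remaining work — rewriting \eqref{e:uj1} in $(\rho,\wh z)$ coordinates and reading off $\mathrm{Re}(P_n)$ — is routine bookkeeping once the hypothesis $2k\beta\notin\NN$ is used to keep the integer and non-integer exponent families disjoint.
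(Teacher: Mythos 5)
Your proposal is correct and takes essentially the same route as the paper: the $\rho^{-2\beta}$ shift in \eqref{cce} makes $u''_\alpha$ harmonic for $\alpha<2\beta$, the growth constraint at the corner together with matching against the coefficients $a_{(\alpha-j)j0}$ of Proposition~\ref{p:Mp} identifies $u''_\alpha$ with a harmonic polynomial of degree at most $\alpha-1$ (the degree-$\alpha$ datum being absent because $u_0'$ has only the powers $r^{2k\beta}$ by \eqref{e:u0}), and all other modes are ruled out by compatibility with the expansion at $M_{\frakp'}$. The differences are only organizational — you exclude non-integer $\alpha$ and decaying modes by corner exponent bookkeeping plus a Liouville/maximum-principle step, where the paper argues via smoothness in $s$ and concludes directly that the coefficient is a harmonic polynomial — apart from a harmless index slip: the absent matching datum is the $j=\alpha$ (degree-$\alpha$) term, i.e.\ the coefficient of $\rho^0 r^\alpha$ coming from $u_0'$, not the ``$j=0$'' term.
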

\begin{proof}
Write the fiberwise Laplacian $\Delta_{g_{0,\frakp}}$ locally as $\rho^{-2\beta}\wh{ \Delta}$,
where $\wh{\Delta}$ is the Laplacian for the flat conical metric $\hat g=e^{2\hat u}|d\hat{z}|^2$.  
Inserting \eqref{expC12} into the equation,
and recalling that $\wh{\Delta}$ commutes with $\rho$, we obtain that $\wh{\Delta} u_\alpha'' = 0$ for $\alpha < 2\beta$. 
so $u_{\alpha}''$ is harmonic with respect to $e^{2\hat{u}}|d\wh z|^2$, and hence also with respect to $|d\wh z|^2$. 
 
The fact that the original coefficient $\tilde{u}_{\alpha}''$ is bounded as $s \to 0$, i.e., $\wh z \to \infty$, means that  
$u_{\alpha}'' = \tilde{u}''_{\alpha} s^{-\alpha}$ grows at most like $s^{-\alpha} \sim |\wh z|^\alpha$. This means that it
is a harmonic polynomial $p_{\alpha}(\wh z)$ of degree less than or equal to $\alpha$. 
Now if $\alpha$ were not an integer, then $s^{\alpha} p_{\alpha}(\wh z)$ would be a sum of terms, each 
vanishing at a nonintegral rate as $s \to 0$.  This is impossible since $u$ is smooth in $s$ at $s=0$.
Hence the only allowable exponents $\alpha < 2\beta$ are nonnegative integers.  When $\alpha=0$, the only possibility for $u_{\alpha}''$ would be a degree 0 harmonic polynomial hence a constant, which leads to the constant term $c\rho^{0}r^{0}$ in the expansion, and by the choice of $u_{0}'$ we know that this constant must vanish. Therefore we can assume $1\leq \alpha<2\beta$.

Write $u''_{\alpha}=p_{\alpha}=\sum_{j=0}^{\alpha} p_{\alpha j}$ where each $p_{\alpha j}(\hat z)$ is a harmonic polynomial of degree $j$. The term $p_{\alpha j}$ corresponds to a term of growth $\rho^{\alpha} s^{-j}a_{j}$ where $a_{j}$ is a linear combination of $\cos \ell \phi$ and $\sin \ell \phi$.  Compatibility at the corner means that this must match the coefficient of $s^{-j+\alpha} r^{\alpha}$, that is $a_{(\alpha-j)j0}(\phi)$ in~\eqref{e:uj1}. And as we observed there, these coefficients indeed have pure degree $j$. In particular, when $j=\alpha$, this corresponds to a term $\rho^{0}r^{\alpha}$ which vanishes from the expansion of $u_{0}'$. Therefore $0\leq j\leq\alpha-1$, and there is a unique homogeneous harmonic polynomial $p_{\alpha j}$ which satisfies
this boundary condition.   
This determines $u_\alpha$ for any integer $ \alpha< 2\beta$.
\end{proof}

\noindent{\bf The case $\alpha = 2\beta$} 

\begin{lemma}
When $\alpha=2\beta$, $u_{\alpha}''$ is determined by $a_{01}$ in~\eqref{e:u0}.
\end{lemma}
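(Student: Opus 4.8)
Write $\beta=\beta_{12}$ throughout, and work in the projective coordinate $\hat z=z/\rho$ on the interior of $\frakC_{12}$, so that, as recorded in \eqref{rescnear12}, $\wh g:=\rho^{-2\beta}g_{0,\frakp}$ restricts on $\frakC_{12}$ to the flat conic metric $e^{2\hat u}|d\hat z|^2$. The plan is first to pin down the equation satisfied by $u_{2\beta}''$. Since $K_{0,\rho}$ vanishes near $\frakC_{12}$, the curvature equation there is $\wh\Delta u+\rho^{2\beta}e^{2u}=0$; inserting \eqref{expC12} and collecting the coefficient of $\rho^{2\beta}$, and using that by the previous lemma the expansion carries no exponent in $(0,2\beta)$ other than $1,\dots,[2\beta]$ while $u_0''\equiv 0$, one gets the $\alpha=2\beta$ instance of \eqref{cce} in the form
\[
\wh\Delta u_{2\beta}'' \;=\; -\,(e^{2u})_0 \;=\; -1 ,
\]
equivalently $\Delta_{|d\hat z|^2}u_{2\beta}''=e^{2\hat u}$. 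Thus, in contrast with the homogeneous equations governing the coefficients for $\alpha<2\beta$, here $u_{2\beta}''$ solves an \emph{inhomogeneous} equation with an already-known right-hand side. This is the step where the constant term of $e^{2u}$ first enters, and it is the source of the new datum.

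Next I would produce the relevant solution and identify the constant. The density $e^{2\hat u}$ is polyhomogeneous on $\frakC_{12}$: near the cone points $\hat z=\pm1$ it is $|\hat z\mp1|^{2(\beta_i-1)}(1+O(|\hat z\mp1|))$, and near $\hat z=\infty$ (i.e.\ $s\to0$) it is $c_\infty|\hat z|^{2\beta-2}(1+o(1))$ with $c_\infty$ a \emph{constant}, since the $2\pi\beta$-cone at infinity is radially symmetric to leading order. Because $2\beta_i$ and $2\beta$ are not indicial roots of the flat Laplacian (using the genericity hypothesis $2k\beta\notin\NN$), there is a polyhomogeneous solution $u_{2\beta}''$ that is bounded at $\hat z=\pm1$ — its finitely many conic coefficients there forced by the equation, no logarithms — and which grows like $\tfrac{c_\infty}{4\beta^2}\,|\hat z|^{2\beta}$ as $\hat z\to\infty$. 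Matching the leading behavior at the corner $M_{\frakp'}\cap\frakC_{12}$ against \eqref{e:uj}, one sees that $\tilde u_{2\beta}''=s^{2\beta}u_{2\beta}''$ is bounded as $s\to0$ and that $\tilde u_{2\beta}''|_{s=0}$ equals the coefficient of $\frakr^2$ in $u_0'$, namely $a_{01}$ from \eqref{e:u0}; hence $\tfrac{c_\infty}{4\beta^2}=a_{01}$, and the prescribed leading asymptotic of $u_{2\beta}''$ at infinity is precisely $a_{01}|\hat z|^{2\beta}$.

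Finally I would argue uniqueness. If $u_{2\beta}''$ and $\bar u_{2\beta}''$ both solve $\wh\Delta(\cdot)=-1$, are regular at $\hat z=\pm1$, and satisfy $s^{2\beta}(\cdot)\to a_{01}$ as $s\to0$, their difference $h$ is $\wh g$-harmonic, hence harmonic for $|d\hat z|^2$ (conformal invariance in two dimensions), entire (no singularity at the only distinguished points $\pm1$), and of growth $o(|\hat z|^{2\beta})$; a Liouville-type argument forces $h$ to be a harmonic polynomial of degree $\le[2\beta]$. But then $\rho^{2\beta}h(z/\rho)=\rho^{2\beta-\deg h}$ times a function homogeneous of degree $\deg h$ in $z$, which near $M_{\frakp'}$ is a non-integer power of $\rho$ — hence of $s$ — contradicting the smoothness of $u$ in $s$ at $M_{\frakp'}$ established before \eqref{e:uj}, unless $h\equiv0$. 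Therefore $u_{2\beta}''$ is the unique solution of $\wh\Delta u_{2\beta}''=-1$ regular at the cone points whose restricted leading coefficient at $s=0$ is $a_{01}$; that is, $u_{2\beta}''$ is determined by $a_{01}$ (together with the already-constructed lower-order data, which enter through the corner matching). I expect the main obstacle to be the borderline analysis at infinity — verifying that the leading coefficient of $e^{2\hat u}$ there is constant so that the critical growth $|\hat z|^{2\beta}$ carries no angular dependence, and that no logarithm is generated — together with the bookkeeping that kills the harmonic-polynomial ambiguity against the $M_{\frakp'}$-expansion, which is the analogue of what was done for $\alpha\in(0,2\beta)$.
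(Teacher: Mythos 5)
Your proposal is correct and follows essentially the same route as the paper: derive $\wh\Delta u_{2\beta}''+1=0$ with $\wh\Delta$ the Laplacian of the flat conic metric $e^{2\hat u}|d\hat z|^{2}$, use the asymptotically conic behavior $e^{2\hat u}\sim c_\infty|\hat z|^{2\beta-2}$ to produce a solution asymptotic to $A|\hat z|^{2\beta}$, rule out the harmonic-polynomial (and log) ambiguity because it would create non-integer powers of $s$ violating smoothness at $M_{\frakp'}$, and identify $A$ with $a_{01}$ by matching at the corner. The only differences are cosmetic: you additionally record the explicit value $A=c_\infty/(4\beta^{2})$, and the phrase ``coefficient of $\frakr^{2}$'' should read ``coefficient of $r^{2\beta}$'' (these differ by a factor $\beta^{2}$), which does not affect the argument.
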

\begin{proof}
Essentially the same calculation as above yields that 
\[
\wh{\Delta} u_{2\beta}'' + 1 = 0,  \quad u_{2\beta}'' = \tilde{u}''_{2\beta} s^{-2\beta},
\]
Here 
\[
\wh {\Delta} = e^{-2U}\Delta_{\wh z}, \ e^{2U} = e^{(2 ((\beta_1-1) \log |\wh z - 1| + (\beta_2-1)\log |\wh z + 1|))},
\]
where $\Delta_{\wh z}$ is the Laplacian for $|dz|^{2}$. Noting that $e^{2U} \sim c|\wh z|^{2\beta - 2}$ as $|\wh z| \to \infty$ for some constant $c \neq 0$, 
it follows from well-known existence theory for the Laplacian on asymptotically conic manifolds that 
there exists a solution $u_{2\beta}''$ to this equation which asymptotic to $A |\wh z|^{2\beta}$ for some constant $A$.
This solution is unique up to harmonic polynomials of degree strictly less than $2\beta$. However, as before, 
any such harmonic polynomial would lead to a term in the expansion of $\tilde{u}_{2\beta}''=u_{2\beta}'' s^{2\beta}$ which is not 
smooth at $s=0$, and this is impossible.  In addition, $R^{2\beta}u_{2\beta}'' s^{2\beta} \to A R^{2\beta}$ as $s \to 0$, so $A$ 
must equal the constant $a_{01}$ in~\eqref{e:u0}.  This determines $u_{2\beta}''$ uniquely.  
\end{proof}

\medskip

We have now explained all coefficients for the initial part of our index set: 
$$
\calE_{2\beta} := \{ \alpha \in \calE, \ \alpha\leq 2\beta\}=\{ 1,2, \dots, [2\beta], 2\beta\}.
$$

\medskip

\noindent{\bf The case $2\beta < \alpha < 4\beta$}. 

Now we consider the cases for $\alpha\in (2\beta, 4\beta)$.
\begin{lemma}
When $\alpha\in (2\beta, 4\beta)$, the index set $\calE \bigcap (2\beta, 4\beta)$ is given by 
\begin{align}
\{(2\beta<\alpha<4\beta: \ \alpha-2\beta\in \NN \mbox{ or } \alpha\in \NN\}.
\end{align}
When $\alpha=\ell+2\beta$ for some $\ell\in \NN$, $u_{\alpha}''$ is determined by $\{a_{(\ell-j)j1}:0\leq j\leq \ell-1\}$ in~\eqref{e:u1}. When $\alpha\in \NN$, $u_{\alpha}''$ is determined by $\{a_{(\alpha-j)j0}(\phi): 0\leq j \leq \alpha-1\}$. 
\end{lemma}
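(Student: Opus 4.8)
The plan is to run the recursion \eqref{cce} in the range $2\beta<\alpha<4\beta$, where $\alpha-2\beta\in(0,2\beta)$, exploiting that every coefficient $u''_{\alpha'}$ already constructed with $\alpha'<2\beta$ carries an index in $\calE\cap(0,2\beta)=\{1,\dots,[2\beta]\}\subset\NN$, together with the corner-compatibility scheme at $M_{\frakp'}\cap\frakC_{12}$ used for $\alpha\le 2\beta$. First I would record the shape of the right side of \eqref{cce}: it is $-2u''_{\alpha-2\beta}$ plus the coefficient of $\rho^{\alpha-2\beta}$ in $e^{2u^{(\alpha-2\beta)}}-1-2u^{(\alpha-2\beta)}=\sum_{m\ge 2}(2u^{(\alpha-2\beta)})^{m}/m!$. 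The first summand is nonzero only if $\alpha-2\beta\in\calE\cap(0,2\beta)$, hence only if $\alpha-2\beta\in\NN$; the $m\ge 2$ terms contribute a sum over $\alpha'_1+\dots+\alpha'_m=\alpha-2\beta$ with each $\alpha'_j\in\NN$, so they too vanish unless $\alpha-2\beta\in\NN$. This yields the dichotomy: either $\wh\Delta u''_\alpha=0$, or $\alpha-2\beta\in\NN$ and the right side is an explicit, already-known function of $\wh z$.

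In the harmonic case $\alpha-2\beta\notin\NN$, I would repeat the argument of the lemma for $\alpha<2\beta$: $u''_\alpha$ is harmonic for $e^{2\wh u}|d\wh z|^2$, hence for $|d\wh z|^2$, and $\tilde u''_\alpha=|\wh z|^{-\alpha}u''_\alpha$ bounded as $|\wh z|\to\infty$ forces $u''_\alpha$ to be a harmonic polynomial, necessarily of integer degree $\le\alpha$; smoothness of $u$ in $s\sim|\wh z|^{-1}$ at $s=0$ then forces $\alpha\in\NN$, giving one of the two families in the index set. Writing $u''_\alpha=\sum_{j=0}^{\alpha}p_{\alpha j}(\wh z)$ with $p_{\alpha j}$ homogeneous harmonic of degree $j$, the piece $p_{\alpha j}$ contributes $\rho^{\alpha-j}r^{j}$ times its (pure degree $j$) angular part, which by corner compatibility must equal the coefficient $a_{(\alpha-j)j0}(\phi)$ of \eqref{e:uj11}; the top slot $j=\alpha$ would match the $r^{\alpha}$-coefficient of $u_0'$, which is $0$ because $u_0'$ only carries the powers $r^{2k\beta}$ and $2k\beta\notin\NN$. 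Thus $0\le j\le\alpha-1$ and each $p_{\alpha j}$ is the unique homogeneous harmonic polynomial with the prescribed angular part, giving the $\alpha\in\NN$ clause.

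For the inhomogeneous case $\alpha=\ell+2\beta$ with $\ell\in\{1,\dots,[2\beta]\}$, \eqref{cce} becomes $\wh\Delta u''_\alpha=F_\ell$, where $F_\ell:=-2u''_\ell-\bigl(e^{2u^{(\ell)}}-1-2u^{(\ell)}\bigr)_\ell$ is a known function built from the polynomials $u''_{\alpha'}$, $\alpha'\le\ell$; it is smooth away from $\wh z=\pm1$ and of polynomial growth of order $\le\ell$ at infinity. Writing $\wh\Delta=e^{-2U}\Delta_{\wh z}$ with $e^{2U}=|\wh z-1|^{2(\beta_1-1)}|\wh z+1|^{2(\beta_2-1)}\sim c|\wh z|^{2\beta-2}$, the existence theory for the Laplacian on the asymptotically conic surface $\frakC_{12}$ (cone angle $2\pi\beta$ at infinity, conic points at $\pm1$) supplies a solution with leading asymptotics $\sim A|\wh z|^{\alpha}$, unique modulo the finite-dimensional space of solutions growing more slowly than $|\wh z|^{\alpha}$. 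As above, smoothness of $\tilde u''_\alpha=|\wh z|^{-\alpha}u''_\alpha$ in $s$ removes every term incompatible with the $\rho$-expansion, and the remaining free constants are pinned down, slot by slot, by matching against that expansion along $M_{\frakp'}$; the data that enters is exactly the $k=1$ part of \eqref{e:u1}, i.e.\ the coefficients $a_{(\ell-j)j1}(\phi)$ of the $r^{(\ell-j)+2\beta}$-terms in $u_1'$, for $0\le j\le\ell-1$ (the $j=\ell$ slot again matching the vanishing $r^{\ell+2\beta}$-coefficient of $u_0'$). Finally I would note that no $\log\rho$ (equivalently $\log R$) terms appear: the exponents $\ell+2\beta$ and the integers $j$ that occur avoid the indicial roots of $\wh\Delta$ — the integers at infinity, the values $\pm j/\beta_i$ at $\pm1$ — under the standing genericity hypothesis $2k\beta\notin\NN$.

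\textbf{The main obstacle} is the bookkeeping in the inhomogeneous case: identifying precisely which of the finitely many undetermined constants in the asymptotic expansion of $u''_\alpha$ on $\frakC_{12}$ survive the smoothness-in-$s$ constraint, and showing that the survivors are fixed by exactly the coefficients $\{a_{(\ell-j)j1}\}$ of \eqref{e:u1} with the correct angular degrees. This is a direct elaboration of the corner-compatibility argument already carried out for $\alpha\le 2\beta$, now executed one step deeper in the shift lattice $\NN+2\beta\NN$, and it also requires checking that the relevant indicial exponents do not collide, which is where $2k\beta\notin\NN$ is used. The harmonic case, by contrast, is routine given the earlier lemmas.
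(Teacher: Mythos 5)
Your proposal follows essentially the same route as the paper: the dichotomy in \eqref{cce} according to whether $\alpha-2\beta\in\NN$, the harmonic-polynomial analysis forcing $\alpha\in\NN$ in the homogeneous case with matching to $a_{(\alpha-j)j0}$ and the vanishing top slot $j=\alpha$, and, in the inhomogeneous case, solving the flat conic Laplace equation on $\frakC_{12}$ with polynomially growing right-hand side, excluding all harmonic additions by smoothness in $s$, and identifying the asymptotic data by corner matching. Two small slips, neither a gap: the particular solution grows like $|\wh z|^{(\ell-1)+2\beta}$ rather than $|\wh z|^{\alpha}$, since the right-hand side built from the $u''_{j}$ has degree at most $\ell-1$; and the matching coefficients $a_{(\ell-j)j1}$ are the coefficients of $r^{j+2\beta}$ in $u'_{\ell-j}$, i.e.\ the $k{=}1$ terms of \eqref{e:uj1} (the statement's citation of \eqref{e:u1} is evidently a typo), not coefficients of $r^{(\ell-j)+2\beta}$ in $u_1'$ --- concretely, the piece $q_j\sim A_j(\phi)s^{-j-2\beta}$ contributes $\rho^{\ell-j}r^{j+2\beta}A_j$ at the corner, which is exactly how the paper pins them down.
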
  
\begin{proof}
When $\alpha<4\beta$, based on whether the right hand side of~\eqref{cce} is trivial, there are two cases. The first is when 
$\alpha-2\beta= \ell \in \{1, 2, \ldots [2\beta]\}$.  
In this case we get the inhomogeneous equation 
\begin{equation}\label{e:inhg}
\wh \Delta u_{\alpha}''  = -2 u''_{\ell} - \left(e^{2u^{(\ell)}}-1-2u^{(\ell)}\right)_{\ell}
\end{equation}
where as before, $(\cdot)_{\ell}$ is the coefficient of $\rho^\ell$ in the expansion of the expression
in parentheses. The right hand side is a linear combination of terms
\begin{equation}\label{e:gamma}
u_{j_1}'' \ldots u_{j_k}''
\end{equation}
where the sum is over all partitions $(j_1, \dots , j_k)$ with $\sum_{i=1}^{k} j_i=\ell$. Recall that each $u_{j_{i}}''$ is a sum of harmonic polynomials in $\wh z$ of degrees strictly less than $j_{i}$.
When there is only one term in the sum of~\eqref{e:gamma}, i.e. $k=1$, then $u_{j_{1}}''=u''_{\ell}$ is a sum of terms  $p_{\ell r}, \ell-1\geq r \geq 0$ where each $p_{
\ell r}$ is a harmonic polynomial of pure degree $r$, and in particular near the ``infinity'' $s=0$ the angular coefficient of $s^{-(\ell-1)}$ is a linear combination of $\cos((\ell-1)\phi)$ and $\sin((\ell-1)\phi)$. 
On the other hand, when there are at least factors in the summand, i.e. $k\geq 2$, then  
each of these products is a homogeneous polynomial of
degree $j\leq \ell-2$, and near infinity the angular coefficients is a trigonometric polynomial of degree no more than  $j$. To combine these two situations, the right hand side is given by 
$$
s^{-(\ell-1)}c_{\ell-1}(\phi) + \sum_{0\leq i \leq \ell-2}s^{-i}d_{i}(\phi),
$$
where $c_{\ell-1}$ is of pure degree $\ell-1$ and $d_{i}$ is mixed of degree at most $i$.

The solution $u_{\alpha}''$ is the sum of an inhomogeneous term $\sum_{i=0}^{\ell-1} q_{i}$ and potential homogeneous terms.
Here each inhomogeneous term $q_i(\hat z)$ solves away the $s^{-i}$ term in the above expansion, hence $q_{i}\sim A_{i}(\phi)s^{-i-2\beta}$. For the top degree $i=\ell-1$, $A_{\ell-1}$ is a linear combination of $\cos ((\ell-1) \phi)$ and $\sin ((\ell-1) \phi)$. And for the lower degrees $i<\ell-1$, $A_{i}$ is a trigonometric polynomial of degree at most $i$. Those coefficients are matched at the corner, since the term $q_{i}$ would lead to $\rho^{\ell-i} r^{i+2\beta}A_{i}$, so $A_{i}$ is given by $a_{(\ell-i)i1}$ in~\eqref{e:uj1}. In particular, when $i=\ell-1$, $A_{\ell-1}$ is matched by $a_{1(\ell-1)1}$ which is indeed of pure degree $\ell-1$, while for other $i\leq \ell-2$, $A_{i}$ is given by $a_{(\ell-i)i1}$ which is a trigonometric polynomial of degree at most $i$ from Proposition~\ref{p:Mp}.
 
Regarding the potential homogeneous terms, $u_{\alpha}''$ is unique up to addition by harmonic polynomials of degree strictly less than $\alpha-2\beta$. However this would give a term in $u_{\alpha}=u_{\alpha}''s^{\alpha}$ which is not smooth. By the same reasoning as in the case $\alpha=2\beta$, we have shown that  $u_{\alpha}''$ is uniquely determined by  coefficients listed above.

The other case for the equation~\eqref{e:inhg} is given by the homogeneous equation
$\wh \Delta u_{\alpha}''=0$.
By the same reasoning as before, $\alpha$ must be an integer, and $u_{\alpha}''=\sum p_{\alpha j}(\hat z)$ where each $p_{\alpha j}$ is a harmonic polynomial of degree $j \leq \alpha-1$. And for each $j$, the boundary asymptotic of the term $\rho^{\alpha}p_{\alpha j}(\hat z) $ is given by $\rho^{\alpha - j} R^{j} A_{\alpha j}(\phi)$ which is a linear combination of $\sin j\phi$ and $\cos j\phi$, hence is matched by the coefficient $a_{(\alpha-j)j0}(\phi)$.
\end{proof}

\medskip

\noindent{\bf The case $ \alpha = 4\beta$}. 

\begin{lemma}
When $\alpha=4\beta$, $u_{\alpha}''$ is determined by $a_{02}$ in~\eqref{e:u0}.
\end{lemma}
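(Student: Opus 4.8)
The plan is to mirror the argument for the case $\alpha=2\beta$, the only structural change being the source term on the right-hand side. First I would substitute the expansion \eqref{expC12} into the curvature equation in the form $\wh{\Delta}u+\rho^{2\beta}e^{2u}=0$ and collect the coefficient of $\rho^{4\beta}$. Since the term $\rho^{2\beta}\cdot 1$ contributes only at order $\rho^{2\beta}$, the resulting equation is exactly \eqref{cce} with $\alpha=4\beta$, that is, $\wh{\Delta}u_{4\beta}''=-2u_{2\beta}''-\big(e^{2u^{(2\beta)}}-1-2u^{(2\beta)}\big)_{2\beta}$. The next point is that $u^{(2\beta)}=\sum_{j=1}^{[2\beta]}\rho^{j}u_{j}''$ carries only integer powers of $\rho$, by the lemma describing the terms with $\alpha\in(0,2\beta)$, so the nonlinear contribution --- a finite sum of products $u_{j_1}''\cdots u_{j_n}''$ with $j_1+\cdots+j_n=2\beta$ --- vanishes identically under the standing genericity assumption $2\beta\notin\NN$. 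Hence the equation reduces to $\wh{\Delta}u_{4\beta}''=-2u_{2\beta}''$, and by the case $\alpha=2\beta$ the right-hand side grows at the rate $|\wh z|^{2\beta}$ as $\wh z\to\infty$.

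Next I would invoke the solvability theory for the Laplacian $\wh{\Delta}=e^{-2U}\Delta_{\wh z}$ of the flat conic metric $e^{2U}|d\wh z|^{2}$ on the interior of $\frakC_{12}$, which is asymptotically conic with cone angle $2\pi\beta$ at the end $\wh z\to\infty$ and has conic points at $\wh z=\pm1$. Since $4\beta$ is not an indicial root of $\wh{\Delta}$ at this end (again because $4\beta\notin\NN$), there is a solution $u_{4\beta}''$ with leading asymptotics $A|\wh z|^{4\beta}$ for some constant $A$, unique up to addition of harmonic functions of growth strictly less than $|\wh z|^{4\beta}$, i.e.\ harmonic polynomials in $\wh z$ of degree $<4\beta$. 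Exactly as in the case $\alpha=2\beta$, any such extra term $c|\wh z|^{m}$ with $m\in\NN$, $m<4\beta$, would produce a term $c\,s^{4\beta-m}$ in $\tilde u_{4\beta}''=s^{4\beta}u_{4\beta}''$, hence a non-integer power of $\rho$ in the expansion of $u$ along $M_{\frakp'}$, contradicting the smoothness recorded in \eqref{e:uj}; so all such corrections vanish and $u_{4\beta}''$ equals the pure particular solution. Finally, I would match at the corner $M_{\frakp'}\cap\frakC_{12}$: using $\rho=R\sin\omega$, $r=R\cos\omega$ and $|\wh z|=r/\rho$, the term $\rho^{4\beta}u_{4\beta}''$ tends to $A\,r^{4\beta}$ as $s\to0$, while along $M_{\frakp'}$ the coefficient of $\rho^{0}r^{4\beta}$ in $u$ --- which is the coefficient of $r^{4\beta}$ in $u_{0}'$ --- equals $a_{02}$ by \eqref{e:u0}. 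Therefore $A=a_{02}$, and this determines $u_{4\beta}''$.

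I expect the only genuinely delicate steps to be the two bookkeeping verifications. The first is confirming that no product of lower coefficients $u_{j}''$ has total $\rho$-weight equal to $2\beta$, so that the nonlinear term in \eqref{cce} really drops out; this is exactly where the explicit description $\calE\cap(0,2\beta)\subset\NN$ is used. The second is checking that among all the terms $\rho^{\alpha}u_{\alpha}''$ in \eqref{expC12} only $\alpha=4\beta$ can contribute to the $\rho^{0}r^{4\beta}$ coefficient that gets matched with $a_{02}$, which again follows from the bound $u_{\alpha}''=O(|\wh z|^{\alpha})$. Everything else is a routine transcription of the $\alpha=2\beta$ argument, with the constant source ``$-1$'' replaced by ``$-2u_{2\beta}''$''.
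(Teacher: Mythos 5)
Your proposal is correct and follows essentially the same route as the paper: reduce \eqref{cce} at $\alpha=4\beta$ to $\wh{\Delta}u_{4\beta}''=-2u_{2\beta}''\sim c|\wh z|^{2\beta}$ (the nonlinear term dropping out since $u^{(2\beta)}$ involves only integer powers and $2\beta\notin\NN$), solve on the asymptotically conic face, kill the harmonic-polynomial ambiguity by smoothness in $s$ at $s=0$, and match the leading constant with $a_{02}$ from \eqref{e:u0}. The paper compresses this into "the same argument as for $u_{2\beta}''$," so your write-up simply makes the identical argument explicit.
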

\begin{proof}
When $\alpha=4\beta$, the term $R^{4\beta}u_{4\beta}$ solves $\wh \Delta u''_{4\beta}=2u''_{2\beta}\sim A|\wh z|^{2\beta}$. Using the same argument as for $u''_{2\beta}$, $u''_{4\beta}$ is unique and asymptotic to $B|\wh z|^{4\beta}$ where $B$ is given by the constant $a_{02}$ in~\eqref{e:u0}.
\end{proof}

Hence we have shown
\begin{equation}
\calE_{4\beta}
=\{(2\beta<\alpha<4\beta: \ \alpha-2\beta\in \NN \mbox{ or } \alpha\in \NN\}\bigcup \{4\beta\}.
\end{equation} 

\medskip

\noindent{\bf The case $2(n-1)\beta < \alpha \leq 2n\beta$}. 

Iteratively we can repeat the argument for $\alpha \in (2(n-1)\beta, 2n\beta]$. 
\begin{lemma}
The index set $\calE_{2n\beta}$ is given by
$$
\{\alpha\in (2(n-1)\beta, 2n\beta]: \alpha=j+2k\beta\}. 
$$
For $\alpha=j+2k\beta$, $j,k\geq 1$, $u_{\alpha}''$ is determined by $\{a_{(j-\ell)\ell k}: 0\leq \ell\leq j-1\}$ in~\eqref{e:uj1}. When $\alpha=2n\beta$, $u_{\alpha}''$ is determined by $a_{0n}$ in~\eqref{e:u0}. When $\alpha\in \NN$, $u_{\alpha}''$ is determined by $\{a_{(\alpha-j)j0}: 0\leq j \leq \alpha-1\}$ in~\eqref{e:uj1}.
\end{lemma}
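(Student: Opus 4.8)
The plan is to argue by induction on $n$, treating the lemma as the inductive step that extends the already-established base cases (the windows $(0,2\beta)$, $\{2\beta\}$, $(2\beta,4\beta)$, $\{4\beta\}$). Assuming the description of $\calE_{2(n-1)\beta}$ and of each coefficient $u''_\alpha$ with $\alpha\le 2(n-1)\beta$, I would run through $\alpha$ in the window $(2(n-1)\beta,\,2n\beta]$ and solve \eqref{cce}, whose right-hand side involves only the previously determined coefficients. Under the genericity hypothesis $2m\beta\notin\NN$ used throughout this section, exactly one of three mutually exclusive alternatives holds for such an $\alpha$: (i) $\alpha=j+2k\beta$ with $j,k\ge 1$, in which case $\alpha-2\beta=j+2(k-1)\beta\in\calE_{2(n-1)\beta}$; (ii) $\alpha=2n\beta$, in which case $\alpha-2\beta=2(n-1)\beta\in\calE_{2(n-1)\beta}$; (iii) $\alpha\in\NN$, in which case $\alpha-2\beta$ is not in the index set and the right-hand side of \eqref{cce} vanishes. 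Any other real $\alpha$ in the window also gives vanishing right-hand side, and then smoothness of $u$ in $s$ at $s=0$ forces $u''_\alpha\equiv 0$; this is what pins down $\calE\cap(2(n-1)\beta,2n\beta]$ as claimed.

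In case (i) the argument parallels the $\alpha\in(2\beta,4\beta)$ lemma. Feeding the inductive expansions \eqref{e:uj1} (together with the harmonic-polynomial structure of each $u''_m$) into the right-hand side of \eqref{cce}, one finds that as $s\to 0$, i.e.\ $\wh z\to\infty$, it has an expansion of the shape $s^{-(j-1)}c_{j-1}(\phi)+\sum_{0\le i\le j-2}s^{-i}d_i(\phi)$ (up to integer shifts by $2\beta$), where the top coefficient $c_{j-1}$ --- coming from the single-factor term $u''_{j+2(k-1)\beta}$ --- is of pure angular degree $j-1$, while the lower $d_i$ have degree at most $i$. Because of the $\rho^{2\beta}$ built into $\Delta_{g_{0,\frakp}}=\rho^{-2\beta}\wh\Delta$, none of these is an indicial term of $\wh\Delta$ at infinity, so on the flat, asymptotically conic surface $\frakC_{12}$ (two conic points at $\wh z=\pm1$, cone angle $2\pi\beta$ as $\wh z\to\infty$) one obtains a particular solution of the form $\sum_i A_i(\phi)s^{-i-2\beta}$ with no log terms, $A_{j-1}$ of pure degree $j-1$ and each $A_i$ of degree $\le i$. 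The solution is unique up to harmonic polynomials of degree $<\alpha-2\beta$, but any such polynomial would introduce a non-integer power of $s$ into $u=u''_\alpha s^\alpha$, contradicting the known smoothness of $u$ in $s$; hence $u''_\alpha$ is unique. Corner-compatibility with $M_{\frakp'}$ then fixes the $A_i$: the term $A_i s^{-i-2\beta}$ in $\rho^\alpha u''_\alpha$ reads near $M_{\frakp'}$ as $\rho^{j-i}r^{i+2k\beta}A_i(\phi)$, so $A_i=a_{(j-i)\,i\,k}$ of \eqref{e:uj1}, i.e.\ $u''_\alpha$ is determined by $\{a_{(j-\ell)\ell k}:0\le\ell\le j-1\}$.

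Cases (ii) and (iii) copy the $\alpha=2\beta$, $\alpha=4\beta$, and integer cases already treated. For $\alpha=2n\beta$ the right-hand side of \eqref{cce} is $-2u''_{2(n-1)\beta}-(\cdots)$; by induction $u''_{2(n-1)\beta}$ is asymptotic to a constant multiple of $|\wh z|^{2(n-1)\beta}$ and, by a degree count using genericity, the nonlinear remainder has strictly smaller growth, so the asymptotically conic Laplace theory on $\frakC_{12}$ yields a solution $u''_{2n\beta}\sim A\,|\wh z|^{2n\beta}$, unique after discarding the lower harmonic polynomials (again forced by smoothness in $s$), and matching $R^{2n\beta}s^{2n\beta}u''_{2n\beta}\to A\,R^{2n\beta}$ with \eqref{e:u0} gives $A=a_{0n}$. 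For integer $\alpha$ in the window the right-hand side is zero, so $u''_\alpha$ is $\wh\Delta$-harmonic, hence harmonic for $|d\wh z|^2$, and of polynomial growth $\le|\wh z|^\alpha$; a degree-$\alpha$ harmonic term would contribute $\rho^0 r^\alpha$, which is absent from $u_0'$ by Lemma~\ref{l:faceI}, so $u''_\alpha=\sum_{j=0}^{\alpha-1}p_{\alpha j}(\wh z)$ with $p_{\alpha j}$ a harmonic polynomial of degree $j$, and corner-matching identifies $p_{\alpha j}$ with the coefficient $a_{(\alpha-j)\,j\,0}(\phi)$ of \eqref{e:uj1}. Assembling the three cases shows $\calE\cap(2(n-1)\beta,2n\beta]=\{\,\alpha=j+2k\beta\,\}$.

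The \textbf{main obstacle} is the bookkeeping of angular degrees and exponents --- the ``guiding principle'' that the right-hand side of \eqref{cce} never produces an indicial term of $\wh\Delta$, so the expansion stays log-free with the stated pure/mixed-degree pattern, and conversely that every exponent $j+2k\beta$ in the window genuinely occurs (its coefficient is not forced to vanish), which is read off from the matching with the already-computed, generically nonzero $M_{\frakp'}$-expansion, together with a count ensuring that the free asymptotic constants of the $\frakC_{12}$-solution are exactly matched by the corner conditions. The non-generic situation $2m\beta\in\NN$ requires only the cosmetic modification noted before Proposition~\ref{p:Mp}: some exponents coincide and their contributions are added, but the structure of the argument is unchanged.
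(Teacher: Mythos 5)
Your proposal follows essentially the same route as the paper: induction over the windows $(2(n-1)\beta,2n\beta]$, the trichotomy $\alpha=j+2k\beta$ with $j,k\geq 1$, $\alpha=2n\beta$, or $\alpha\in\NN$ (with all other exponents excluded by smoothness in $s$), the observation that the right-hand side of \eqref{cce} contains no indicial terms so no logarithms appear, uniqueness of $u''_\alpha$ modulo harmonic polynomials which are ruled out by smoothness in $s$, and corner matching with the coefficients of \eqref{e:uj1} and \eqref{e:u0}. Two small slips, neither fatal: in case (i) the particular solution should be $\sum_{\ell}A_\ell(\phi)s^{-\ell-2k\beta}$ rather than $s^{-\ell-2\beta}$ (your matching formula $\rho^{j-\ell}r^{\ell+2k\beta}A_\ell$, giving $A_\ell=a_{(j-\ell)\ell k}$, is the correct one, so this is only notational); and in case (ii), for $n\geq 3$ the nonlinear remainder does \emph{not} have strictly smaller growth --- products $u''_{2k_1\beta}\cdots u''_{2k_m\beta}$ with $\sum k_i=n-1$ grow exactly like $|\wh z|^{2(n-1)\beta}$, which is precisely why the paper asserts the whole right-hand side is $A|\wh z|^{2(n-1)\beta}$; this only changes the constant, so your conclusion that $u''_{2n\beta}$ is unique, asymptotic to $B|\wh z|^{2n\beta}$, and matched by $a_{0n}$ still stands.
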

\begin{proof}
As before there are two cases: $\alpha-2\beta=\sum \alpha'$ for some $\alpha'\in \calE$, which by induction means $\alpha=j+2k\beta$ with $n\geq k\geq 1$, $j\geq 0$, and $k=n$ if and only if $j=0$; or $\alpha \in \NN$. Note that the endpoint $\alpha=2n\beta$ is included in the first case. 

In the first case, assuming $j>0$ hence $k\leq n-1$, then $u_{\alpha}''$ solves an inhomogeneous equation~\eqref{e:inhg} where the right hand side is a sum of terms 
\begin{equation}\label{e:gamman}
u_{\alpha_{1}}''\dots u''_{\alpha_{m}}, \ \sum_{i} \alpha_{i}=\alpha-2\beta=j+2(k-1)\beta.
\end{equation}
By induction, each term $u''_{\alpha_{i}}$, where $\alpha_{i}=j_{i}+2k_{i}\beta$, is a sum of terms $\sum_{\ell=0}^{j_{i}-1} A_{\ell}s^{-\ell-2k_{i}\beta}$ such that $A_{j_{i}-1}(\phi)$ is of pure degree $j_{i}-1$ and other $A_{\ell}$ is mixed of degree at most $\ell$.  Therefore, as discussed before, the terms in~\eqref{e:gamman} are characterized in two categories: (a) $m=1$, i.e. $\alpha_{1}=j+2(k-1)\beta$, then the first term is given by $A_{j-1}(\phi)s^{-(j-1)-2(k-1)\beta}$ where $A_{j-1}$ is of pure degree $j-1$, while the rest of the terms would combine with (b); (b) $m\geq 2$, then the product contains terms $A_{\ell}s^{-\ell-2(k-1)\beta}$ where $\ell\leq j-2$, and each $A_{\ell}$ is mixed of degree at most $\ell$. 
For the same reason as before, $u_{\alpha}''$ has a unique solution which is asymptotically given by $\sum_{\ell=0}^{j-1} A_{\ell}s^{-\ell-2k\beta}$, and each term leads $A_{\ell}\rho^{j-\ell}R^{\ell+2k\beta}$, and each $A_{\ell}(\phi)$ is determined by coefficients $a_{(j-\ell)\ell k}$ in~\eqref{e:uj1}. In particular, only when $\ell=j-1$,  $a_{(j-\ell)\ell k}$ is of pure degree $\ell$, while for other $\ell\leq j-2$, $a_{(j-\ell)\ell k}$ is mixed of degree $\ell$. So it is matched.

On the other hand, if in the first case $j=0$ and $k=n$, then $\alpha=2n\beta$. Then $u_{\alpha}''$ satisfies the same equation~\eqref{e:inhg} with the special requirement that all the $\alpha_{i}$ in~\eqref{e:gamman} are of the form $2j\beta$. Then by induction the right hand side is given by $A|\hat z|^{2(n-1)\beta}$, hence $u_{2n\beta}''$ is unique and asymptotic to $B|\hat z|^{2n\beta}$ which is matched by coefficient $a_{0n}$.

In the second case ($\alpha\in \NN$), $u_{\alpha}''$ solves the homogeneous equation $\wh \Delta u_{\alpha}''=0$ and is a combination of harmonic polynomials of degree $j< \alpha$, and by the same argument as before, each term is determined by $\{a_{(\alpha-j)j0}: 0\leq j\leq \alpha-1\}$.
\end{proof}

With the discussion above, we have 
\begin{proposition}
The index set $\calE$ is given by
\begin{equation}\label{index}
\calE=\{j+2k\beta: j,k \in \NN \}.
\end{equation}
\end{proposition}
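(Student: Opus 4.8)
The plan is to assemble the global statement from the interval-by-interval analysis already carried out in the lemmas above, so the argument is essentially bookkeeping. The first point is that the half-open intervals $I_n := (2(n-1)\beta, 2n\beta]$, $n=1,2,\ldots$, partition $(0,\infty)$, and that $\calE \subset (0,\infty)$: the coefficient $u$ is bounded near $\frakC_{12}$ and smooth up to $M_{\frakp'}$, so no negative (or non-real) exponents occur, and the would-be $\rho^0$ term was shown to vanish by the normalization of $u_0'$ in Lemma~\ref{l:faceI}. Hence $\calE = \bigcup_{n\geq 1}(\calE\cap I_n)$, and it suffices to identify $\calE\cap I_n$ for each $n$.

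Next I would observe that the lemmas proved above are precisely the base cases ($n=1,2$) and the inductive step of an induction on $n$ computing $\calE\cap I_n$. This induction is well-founded because of the downward shift by $2\beta$: the equation~\eqref{cce} for $u_\alpha''$ with $\alpha\in I_n$ has right-hand side built out of the $u_{\alpha'}''$ with $\alpha'\leq \alpha-2\beta$, all of which lie in $I_1\cup\cdots\cup I_{n-1}$ and are therefore already determined. The conclusion of the induction, recorded in the last lemma, is that $\calE\cap I_n = \{\,j+2k\beta : j,k\in\NN\,\}\cap I_n$, where the terms with $k=0$ come from the homogeneous equation $\wh\Delta u_\alpha''=0$ (forcing $\alpha=\alpha\in\NN$), those with $k\geq 1$, $j\geq 1$ from the inhomogeneous equation, and those with $j=0$ from the endpoint case $\alpha=2n\beta$. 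Taking the union over all $n$ then yields~\eqref{index}, $\calE = \{\,j+2k\beta : j,k\in\NN\,\}$, the excluded value $j=k=0$ being harmless since its coefficient is zero. I would also emphasise at this point what is implicit in~\eqref{index}, namely that no logarithmic terms appear, so that $\calE$ is genuinely a set of exponents and not an index family with nontrivial log structure; this is the "guiding principle" invoked repeatedly in the lemmas, that after multiplying the fiber equation by $\rho^{2\beta}$ the forcing term is shifted off every indicial root $s^{-j}$ of $\wh\Delta$ and so term-by-term solution never resonates.

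I do not expect a serious obstacle here, since the substantive work — constructing the coefficients $u_\alpha''$, ruling out logarithms, and matching the free indicial coefficients $a_{j\ell k}$ of Proposition~\ref{p:Mp} at the corner $M_{\frakp'}\cap\frakC_{12}$ — has already been done in the preceding lemmas. The only thing requiring care is the combinatorial bookkeeping: one must check that the three cases in the last lemma ($k=0$; $j\geq 1,k\geq 1$; $j=0$) together exhaust $\{\,j+2k\beta\,\}\cap I_n$ without overlap, which uses the standing genericity assumption $2k\beta\notin\NN$, and that the induction on $n$ does close, which is exactly the $2\beta$-shift observed above.
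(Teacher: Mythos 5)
Your proposal is correct and follows essentially the same route as the paper, whose proof of this proposition is precisely the assembly of the preceding interval-by-interval lemmas (the cases $\alpha\in(0,2\beta]$, $\alpha\in(2\beta,4\beta]$, and the inductive step on $(2(n-1)\beta,2n\beta]$), made well-founded by the $2\beta$-shift in~\eqref{cce}. Your added remarks on the vanishing constant term and the absence of logarithms are consistent with what the paper's lemmas already establish.
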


For each term $R^{\alpha}\tilde u''_{\alpha}$ with $\alpha=j+2k\beta$,  $\tilde u''_{\alpha}(s,\phi)$ is smooth up to $s=0$ 
and asymptotically given by a sum of terms with growth $\{s^{\ell}: \ell\in \NN, \ell\leq j\}$. That is, for the solution near 
the corner there is a product-type expansion
$$
u\sim \sum_{\alpha=j+2k\beta} \sum_{\ell = 0}^{j-1} R^{\alpha}s^{\ell} u_{\alpha \ell}(\phi).
$$

\medskip

\noindent{\bf The approximate solution}

\begin{lemma}\label{l:faceII}
There exists a polyhomogeneous function 
\[
\tilde u\sim \sum_{\alpha=j+2k\beta \in \calE, \ell\leq j}  R^{\alpha}s^{\ell}u_{\alpha \ell}(\phi) + \calO(\rho^{N+\epsilon})
\]
which satisfies
\begin{equation}
\Delta_{g_{0,\frakp}} \tilde{u}+ e^{2\tilde{u}}+K_{g_{0,\rho}} = \calO( \rho^{N} )
\label{appsolnN}
\end{equation}
for any $N \geq 0$ as $\rho = R s \to 0$.
\end{lemma}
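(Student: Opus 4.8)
The plan is to assemble the two formal expansions already in hand — the Taylor expansion of $u$ in powers of $s$ along $M_{\frakp'}$ from Proposition~\ref{p:Mp}, and the expansion in powers of $R$ along $\frakC_{12}$ with index set $\calE=\{j+2k\beta\}$ from the preceding lemmas — into a single genuinely polyhomogeneous function on a neighborhood of the corner $M_{\frakp'}\cap\frakC_{12}$, and then to read off \eqref{appsolnN} from the fact that every coefficient was built to solve the curvature equation formally. First I would record that the two families of coefficients are compatible at the corner; this is not an extra hypothesis. The coefficients $\tilde u''_\alpha$ on $\frakC_{12}$ were produced, in each of the lemmas covering the ranges $2(n-1)\beta<\alpha\leq 2n\beta$, precisely by requiring that the restriction of $R^{\alpha}\tilde u''_\alpha(s,\phi)$ to $s=0$ match the appropriate coefficient $a_{(j-\ell)\ell k}(\phi)$ (resp.\ $a_{0n}$, resp.\ $a_{(\alpha-j)j0}$) in the expansion \eqref{e:uj1} of the $\tilde u'_j$. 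Conversely each $\tilde u'_j$ is smooth in $s$ at $s=0$, so its expansion in $r$ there matches the expansion in $R$ of $\sum_\alpha R^{\alpha}\tilde u''_\alpha$ term by term. Thus the two formal series define one formal polyhomogeneous expansion of product type on the model corner $[0,\e)_R\times[0,\e)_s\times\bS^1_\phi$, with terms $R^{j+2k\beta}s^{\ell}u_{\alpha\ell}(\phi)$, $0\leq\ell\leq j$, and with index sets $\{j+2k\beta\}$ at $\frakC_{12}$ and $\NN$ at $M_{\frakp'}$; both sets are discrete with finitely many elements below any bound since $\beta>0$.

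Next I would invoke the standard Borel (asymptotic summation) argument for polyhomogeneous expansions on a manifold with corners: given compatible formal expansions with such index sets at two intersecting boundary hypersurfaces, there is a polyhomogeneous function $\tilde u$ near the corner whose expansions at $M_{\frakp'}$ and $\frakC_{12}$ are the prescribed ones. Truncating the $\frakC_{12}$ expansion at order $N$ displays $\tilde u$ in the stated form $\tilde u\sim\sum_{\alpha=j+2k\beta\in\calE,\ \ell\leq j}R^{\alpha}s^{\ell}u_{\alpha\ell}(\phi)+\calO(\rho^{N+\e})$. It remains to verify the error bound. Since $\tilde u$ is bounded and polyhomogeneous, $e^{2\tilde u}$ is polyhomogeneous, and $\Delta_{g_{0,\frakp}}\tilde u$ is polyhomogeneous because $\rho^{2\beta}\Delta_{g_{0,\frakp}}$ is a $b$-operator in these coordinates; hence $E:=\Delta_{g_{0,\frakp}}\tilde u+e^{2\tilde u}+K_{g_{0,\rho}}$ is polyhomogeneous near the corner. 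By construction the coefficients of $\tilde u$ were determined order by order in $\rho$ near $M_{\frakp'}$ via \eqref{e:ind}--\eqref{e:shift} and order by order in $R$ near $\frakC_{12}$ via \eqref{cce}, so the formal expansion of $E$ vanishes identically at both faces. A polyhomogeneous function vanishing to infinite order at both boundary hypersurfaces is $\calO(R^M)$ and $\calO(s^M)$ uniformly for every $M$, hence $\calO((Rs)^M)$; since $\rho=R\sin\omega\sim Rs$ near the corner, $E=\calO(\rho^M)$ for every $M$, which is \eqref{appsolnN}. Away from the corner — near the pole of the hemisphere, where $s\sim1$, and along the rest of $M_{\frakp'}$ — the construction reduces to single-face Borel summation exactly as in the flat case and in \cite{MW}, and the pieces are patched by a partition of unity subordinate to a cover of a neighborhood of $\pi^{-1}(\frakq)$; the cutoff errors are supported where $\tilde u$ is already exact or where $\rho$ is bounded below, so \eqref{appsolnN} is unaffected.

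The main obstacle I anticipate is bookkeeping rather than analysis: one must be certain that the two expansions fit together into a product-type polyhomogeneous expansion — that no coefficient $u_{\alpha\ell}(\phi)$ is over-determined or left free in a way inconsistent between the two faces — and that the nonlinear term $e^{2\tilde u}$ generates no exponents outside $\calE$. The latter is exactly the ``guiding principle'' used repeatedly above: products of the $u''_\alpha$ land back in $\calE$ after the $2\beta$ shift, together with the trivial closure $\calE+\calE\subset\calE$. Once this closure is in place, the asymptotic summation and the verification of the error estimate are routine, and the statement follows.
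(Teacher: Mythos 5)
Your proposal is correct and follows essentially the same route as the paper: the paper's proof simply takes $\tilde u$ to be a Borel sum of the formal series constructed at $M_{\frakp'}$ and $\frakC_{12}$ and declares \eqref{appsolnN} obvious, while you spell out the corner-compatibility, the asymptotic summation, and the infinite-order vanishing of the error at both faces that make it so. No gap; your write-up is just a more detailed version of the paper's one-line argument.
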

\begin{proof}
We take $\tilde{u}$ to be a Borel sum of the formal polyhomogeneous series constructed above; \eqref{appsolnN} is
then obvious.
\end{proof}

We now write 
\begin{equation}
\tilde g_{0}=e^{2\tilde{u}}g_{0,\frakp}.
\label{appsol}
\end{equation}

\medskip

\noindent{\bf Correction to an exact solution}
The final step is to correct the approximate solution to an exact one by solving 
\[
\Delta_{\tilde g_{0}}v+e^{2v}+K_{\tilde g_{0}}=0
\]
for each $\rho$, or equivalently, 
\begin{equation}\label{e:hypnl}
\Delta_{\tilde g_{0}}v+2v = -(K_{\tilde g_{0}}+1)-(e^{2v}-1-2v) 
\end{equation} 
Of course, this solution is already known to exist and be unique, but the method here will show that it is polyhomogeneous on
$\calC_k$.   Indeed, we find a solution to \eqref{e:hypnl} satisfying $|v| \leq C \rho^N$ for any fixed $N$; these are all the
same by uniqueness of course. However, in this way we can estimate the $b$-derivatives of this solution up to that order,
and hence, since $N$ is arbitrary, to all orders. 

For convenience, write $f = - (K_{\tilde{g}_0} + 1)$; we have arranged that $f$ is smooth and vanishes to all orders at $\rho=0$. 
We also set $Q(v) = - (e^{2v}-1-2v)$. 

\begin{proposition}\label{p:maxPr}
For each $N>0$ and $0 < \rho < \epsilon$, there is a unique bounded solution $v$ to \eqref{e:hypnl} such that $|v|\leq C_{0}\rho^{N}$ for some constant $C_{0}$.
\end{proposition}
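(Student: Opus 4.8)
The statement is an \emph{a priori size estimate} on a solution whose existence and uniqueness are already guaranteed classically, so my plan is to prove it by a barrier/comparison argument for the shifted conic Laplacian $\Delta_{\tilde g_0}+c$ with $c>0$, exactly the "maximum principle on each fiber" alluded to in the outline. Observe first that \eqref{e:hypnl} is equivalent to the constant-curvature equation $\Delta_{\tilde g_0}v+e^{2v}+K_{\tilde g_0}=0$ on the closed conic surface $M_\frakp=\pi^{-1}(\frakp)$, for which a bounded solution exists and is unique by the results of Troyanov and McOwen recalled in \S\ref{ss:literature}; the point of the proposition is to control its size by $\rho^N$. (If one prefers a self-contained construction, one can instead produce $v$ between the barriers below by a monotone iteration $(\Delta_{\tilde g_0}+\lambda)v^{(n+1)}=\lambda v^{(n)}-(e^{2v^{(n)}}-1)+f$ with $\lambda$ large, using the conic elliptic regularity of \S\ref{s:analysis} for compactness; I would relegate this to a remark.)

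\textbf{Key steps.} Write $F(w)=\Delta_{\tilde g_0}w+e^{2w}+K_{\tilde g_0}$ and try the fiberwise-constant barriers $w_\pm=\pm C_0\rho^N$, for which $\Delta_{\tilde g_0}w_\pm=0$, so $F(w_\pm)=(e^{\pm 2C_0\rho^N}-1)+(K_{\tilde g_0}+1)$. By Lemma~\ref{l:faceII} (and the discussion preceding \eqref{appsol}), $f=-(K_{\tilde g_0}+1)$ vanishes to all orders at $\rho=0$, hence $|K_{\tilde g_0}+1|\le C_{N+1}\rho^{N+1}$ uniformly in $\frakp$; using $e^{x}-1\ge x$ and, for small $x\ge0$, $e^{-x}-1\le-x/2$, one gets, for a suitable $C_0$ and after possibly shrinking $\epsilon$ depending on $N$, that $F(w_+)>0$ and $F(w_-)<0$ on $0<\rho<\epsilon$. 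Then the comparison step: since $F(v)=0\le F(w_+)$, the difference $\psi=w_+-v$ satisfies $\Delta_{\tilde g_0}\psi+c(x)\psi\ge0$ with $c(x)=2\int_0^1 e^{2(tw_++(1-t)v)}\,dt>0$ bounded (as $v$ is bounded); the maximum principle for bounded solutions then forces $\psi\ge0$, i.e.\ $v\le C_0\rho^N$, and the symmetric argument with $v-w_-$ gives $v\ge-C_0\rho^N$. Applying the same comparison to the difference of two bounded solutions (again $c>0$) yields uniqueness.

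\textbf{Main obstacle.} The scheme is soft once the background metric $\tilde g_0$ is in hand, so the only genuine points of care are, first, that the maximum principle is invoked correctly \emph{through the conic points} of $M_\frakp$: this is handled by the standard fact, from the Friedrichs-domain description in \S\ref{s:analysis}, that a bounded solution of a conic-elliptic equation is continuous up to each cone with a constant leading coefficient, so that a negative interior minimum of $\psi$ can be excluded at cone points just as at smooth points; and second, that $C_0$ and the range of $\rho$ can be chosen \emph{uniformly over all fibers with a given $\rho$}, which is precisely what the uniform decay $f=\calO(\rho^\infty)$ coming from the Borel summation in Lemma~\ref{l:faceII} supplies. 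I expect the conic-point maximum principle to be the part most worth writing out carefully, with everything else being bookkeeping.
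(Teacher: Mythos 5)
Your proposal is correct, but it organizes the argument differently from the paper. The paper does not invoke the classical existence theory and then estimate the known solution by barriers; instead it re-constructs the solution by a Picard-type iteration $v_0=0$, $(\Delta_{\tilde g_0}+2)v_{j+1}=f+Q(v_j)$, using the sup-norm estimate $\sup|w|\le\tfrac12\sup|h|$ for $(\Delta_{\tilde g_0}+2)w=h$ (itself a consequence of the maximum principle on the conic fiber) together with the quadratic smallness of $Q$, and an induction showing $\sup|v_j|\le C_0\rho^N$ for $\rho<(C_1C_0)^{-1/N}$; uniqueness then also follows from the maximum principle, and in particular the solution is independent of $N$. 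Your route — fiberwise-constant sub/supersolutions $w_\pm=\pm C_0\rho^N$, the comparison principle for $\Delta_{\tilde g_0}+c$ with $c=2\int_0^1 e^{2(tw_\pm+(1-t)v)}\,dt>0$, and uniqueness by the same comparison — uses exactly the same two ingredients ($f=\calO(\rho^\infty)$ from the Borel sum, and the maximum principle across the conic points) but trades the constructive iteration for an a priori two-sided bound on the already-known bounded solution, which is perfectly adequate for the later commutator/conormality argument since that argument only needs the bound $|v|\le C_0\rho^N$ and the maximum principle. What the paper's iteration buys is self-containedness (no appeal to McOwen/Troyanov for existence relative to the glued background $\tilde g_0$) at the cost of slightly more bookkeeping; what your barrier argument buys is brevity and a cleaner separation of existence from the size estimate. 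Your explicit attention to the validity of the maximum principle through the cone points (via the Friedrichs-domain expansion, so a bounded sub/supersolution difference is continuous up to each cone and a negative interior minimum can be excluded there) is a point the paper leaves implicit, and is worth writing out as you indicate; the uniformity in $\frakp$ for fixed small $\rho$ is likewise supplied, as you say, by the uniform $\calO(\rho^\infty)$ vanishing of $K_{\tilde g_0}+1$.
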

\begin{proof}
By the maximum principle, if $(\Delta + 2) w = h$, then $\sup |w| \leq \tfrac12 \sup|h|$. For a given choice of $N$,
there exists a constant $C_0$ such that $|f| \leq C_0 \rho^N$ for all sufficiently small $\rho$. There is also a constant
$C_1$ so that $|Q(v)| \leq C_1 |v|^2$ when $|v| \leq 1$.  Now define the sequence $v_j$ by $v_0 = 0$ and
\[
(\Delta_{\tilde{g}_0} + 2) v_{j+1} = f + Q(v_j).
\]
By the remarks above,  $\sup |v_{j+1}| \leq  \tfrac12 C_0 \rho^N + \tfrac12 C_1 \sup|v_j|^2$. 
Assume inductively that $\sup |v_j| \leq A_j = C_0 \rho^N$. Then 
\[
\sup|v_{j+1}| \leq \tfrac12 C_0 \rho^N + \tfrac12 C_1 A_j^2 \leq \tfrac12 ( C_0 \rho^N + C_1 C_0^2 \rho^{2N}) 
= C_0 \rho^N ( \tfrac12 +  \tfrac12 C_1 C_0 \rho^N),
\]
which we can make less than $C_0 \rho^N$ by choosing $\rho < (C_1 C_0)^{-1/N}$. 

It now follows by standard theory that the $v_j$ converge to a solution $v$ which satisfies $|v| \leq C_0 \rho^N$, and
the maximum principle shows that this solution is unique, and in particular independent of $N$.

\end{proof}

\medskip

\noindent{\bf Polyhomogeneity of the solution}  
In order to prove that the solution $u = \tilde{u} + v$ is polyhomogeneous, it suffices to prove that
$v$ is conormal of order $N$, i.e., that $W_1 \ldots W_\ell v = \calO(\rho^N)$ for any $W_j \in \calV_b(\calC_k)$ and for any $\ell$.

Observe that the problem localizes near $\frakC_{12}$ since the polyhomogeneity of $u$ in all regions where
conic points are not coalescing was proved in \cite{MW}.   Now apply a single $b$-vector field $W$ to \eqref{e:hypnl}. This gives
\[
(\Delta_{\tilde{g}_0} + 2) Wv = Wf + 2 W\tilde{u} \Delta_{\tilde{g}_0} v + W Q(v).
\]
This uses that $\Delta_{\tilde{g}_0} = e^{-2\tilde{u}}\Delta_{g_{0,\frakp}}$.  Now $|Wf| \leq C \rho^N$ and $|\Delta_{\tilde{g}_0} v| \leq |-2v + 
f + Q(v)| \leq C \rho^N$, while $|W \tilde{u}| \leq C$. Finally, we can write $W Q(v) = \lambda Wv$
where $\lambda$ is a smooth function which is also bounded by $C \rho^N$. Absorbing this last term on the left hand side
perturbs the constant $2$ by a small amount, so we can bound $Wv$ by $C' \rho^N$ by the maximum principle. 

This argument can obviously be iterated any number of times, which shows that $v$ is indeed conormal of order $N$. 

We have proved the 

\begin{proposition}\label{p:poly}
When $k=2$, or more generally near the locus in $\calC_k$ where precisely two conic points collide, the solution $u$ 
to \eqref{e:hyp} is polyhomogeneous. 
\end{proposition}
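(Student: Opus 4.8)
The plan is to assemble the pieces developed above into a proof that $u = \tilde u + v$ is polyhomogeneous. First I would note that polyhomogeneity of $u$ is a local statement on $\calC_k$, and in every region where conic points are \emph{not} coalescing it is already established in \cite{MW}; thus it suffices to work in a neighborhood of the interior of $\frakC_{12}$ together with its corner with $M_{\frakp'}$, which is precisely the regime analyzed in this subsection. There the relevant boundary defining functions are $\rho = Rs$ for $\frakC_{12}$ and $s$ for $M_{\frakp'}$, and $\calV_b(\calC_k)$ is spanned near this corner by $R\partial_R$, $s\partial_s$ and $\partial_\phi$.

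Next I would invoke Lemma~\ref{l:faceII}: for each $N$ there is a polyhomogeneous function $\tilde u$ on $\calC_k$, with index set $\calE = \{j + 2k\beta : j,k \in \NN\}$ and the product-type corner expansion recorded above (matched across $M_{\frakp'}\cap\frakC_{12}$ by the lemmas of this subsection), solving the curvature equation \eqref{e:hyp} modulo $\calO(\rho^N)$. Setting $\tilde g_0 = e^{2\tilde u}g_{0,\frakp}$, Proposition~\ref{p:maxPr} produces the unique bounded solution $v$ of the residual nonlinear equation \eqref{e:hypnl}, satisfying $|v| \le C_0\rho^N$. Since by uniqueness this $v$ does not depend on $N$, in fact $v = \calO(\rho^N)$ for every $N$, i.e.\ $v$ vanishes to infinite order at $\frakC_{12}$; the same holds at $M_{\frakp'}$ because $\rho = Rs$ and $s\le 1$ there.

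The heart of the argument — and the step I expect to be the main obstacle — is upgrading this decay to full conormality: $W_1\cdots W_\ell v = \calO(\rho^N)$ for all $W_j \in \calV_b(\calC_k)$ and all $\ell$, $N$. Applying one $b$-vector field $W$ to \eqref{e:hypnl} and using $\Delta_{\tilde g_0} = e^{-2\tilde u}\Delta_{g_{0,\frakp}}$ gives $(\Delta_{\tilde g_0} + 2)Wv = Wf + 2(W\tilde u)\Delta_{\tilde g_0}v + WQ(v)$; one checks $Wf = \calO(\rho^N)$, that $\Delta_{\tilde g_0}v = -2v + f + Q(v) = \calO(\rho^N)$ while $W\tilde u = \calO(1)$, and that $WQ(v) = \lambda\,Wv$ with $\lambda = \calO(\rho^N)$, so that absorbing the last term on the left perturbs the zeroth-order coefficient $2$ negligibly and the maximum-principle estimate of Proposition~\ref{p:maxPr} again bounds $Wv$ by $C'\rho^N$. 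The delicate points are that each $b$-vector field genuinely preserves the weighted H\"older spaces on $\calC_k$ near this corner, so that the iterated commutators $[W_1,[W_2,\dots,\Delta_{\tilde g_0}]]$ remain $b$-operators with controlled coefficients, and that the induction closes uniformly in $\ell$; granting this, iterating shows $v$ is conormal of order $N$ for every $N$. Finally, $v$ conormal and vanishing to infinite order at $\frakC_{12}$, together with $\tilde u$ polyhomogeneous on $\calC_k$, yields that $u = \tilde u + v$ is polyhomogeneous, which is the assertion. The statement for general $k$ near a locus where exactly two conic points collide is identical, since by the cluster decomposition (Lemma~\ref{l:tree}) that region is locally a product of the two-point model with a factor over lower configuration spaces, over which $u$ is already known to be polyhomogeneous.
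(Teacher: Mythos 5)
Your proposal is correct and follows essentially the same route as the paper: localize near $\frakC_{12}$ (citing \cite{MW} away from the coalescence locus), take the Borel-summed approximate solution $\tilde u$ of Lemma~\ref{l:faceII}, produce the exact correction $v$ with $|v|\le C_0\rho^N$ via Proposition~\ref{p:maxPr}, and then run the same commutator/maximum-principle argument, applying $b$-vector fields to \eqref{e:hypnl} and absorbing the $WQ(v)=\lambda Wv$ term, to get conormality of $v$ to all orders and hence polyhomogeneity of $u=\tilde u+v$. The only cosmetic difference is your closing appeal to the cluster decomposition for general $k$, whereas the paper's two-point construction in this subsection is already phrased for arbitrary $k$ with exactly two merging points.
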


\subsection{Hyperbolic metrics with an arbitrary number of merging cone points}\label{ss:hypk}
The construction for the case of two merging cone points initiates and provides the pattern for an inductive
argument to prove the corresponding regularity result for the solution $u$ when an arbitrary number of
points coalesce. The construction in that simpler case was given in sufficient detail that the steps below 
for $k > 2$ are straightforward generalizations.

\begin{theorem}\label{t:hyp}
Fix $k \geq 2$ and $\vec \beta$. Then the family of fiberwise conic hyperbolic metrics $g_{\frakp,\beta}$ 
is polyhomogeneous on $\calC_{k}$. 
\end{theorem}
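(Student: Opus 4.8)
The plan is to argue by induction on $k$, with the base case $k=2$ already in hand from Proposition~\ref{p:poly}. Assume the theorem holds for all $\ell<k$. Since polyhomogeneity is a local statement on $\calC_k$, it suffices to verify it near each boundary point $q$. If $q$ does not lie over the principal face $F_{\max}$ of $\calE_k$, then by the cluster decomposition (Lemma~\ref{l:tree}) the point $\wh\beta(q)$ has a product neighborhood $\calU=\prod_{j=1}^{\ell}\calU_j$ with $\calU_j\subset\calE_{|\calI_j|}$ and every $|\calI_j|<k$, over which $\wh\beta$ splits as a product of the lower b-fibrations $\wh\beta_{|\calI_j|}$. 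In this region the fiberwise conic hyperbolic metric has cone points coalescing only within the individual clusters $\calI_j$, and away from the clusters it is uniformly comparable to a fixed smooth hyperbolic metric; the conformal factor $u$ therefore decouples (up to a remainder smooth down to the relevant faces) into contributions each governed by one of the lower families $\calC_{|\calI_j|}$, and each of these is polyhomogeneous by the inductive hypothesis. Hence $u$ is polyhomogeneous on $\calU$.

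\textbf{The coalescing case.} It remains to treat a neighborhood of a point over $F_{\max}$, where all $k$ points collide; let $(T,\calJ)$ be the tree and node data attached to this corner. Following the $k=2$ construction, I would build a family of background metrics $g_{0,\frakp}$ on $M$ by an iterated gluing: far from the cluster, the fixed hyperbolic cone metric $h_{0,\frakp'}$ on the limiting lower configuration $\frakp'$ (which has strictly fewer cone points, hence a polyhomogeneous fiber metric on $M$ by the inductive hypothesis), and near the cluster the degenerating flat family of Theorem~\ref{flat}, patched through cutoffs supported in the collars of the intermediate faces $\frakC_{\calI}$, $\calI\in T$ --- an iterated version of~\eqref{e:model}. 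The point is that on each hemisphere face $\frakC_{\calI}$ the rescaled operator $\rho_{\calI}^{2\beta_{\calI}}\Delta_{g_{0,\frakp}}$ restricts to the Laplacian of a flat conic metric on $\bS^2_+$ blown up at finitely many points, with aggregate cone angle at the south pole prescribed by the defect formula~\eqref{e:beta0} --- exactly the model governing one level down in the induction. One then constructs the full formal expansion of the solution of~\eqref{e:hyp} order by order: starting from the expansion on $M_{\frakp'}$ (whose structure is supplied by the inductive hypothesis, generalizing Proposition~\ref{p:Mp}), and then sweeping from the innermost face $\frakC_{\max}$ outward through the tower $\{\frakC_{\calI}:\calI\in T\}$, at each step solving an inhomogeneous equation of the form~\eqref{cce} for the flat conic Laplacian on that hemisphere and fixing the homogeneous (indicial) ambiguity by compatibility with the expansion on the adjacent face. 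As in the two-point case the resulting index family is of product form, generated by exponents $\{j+2k\beta_{\calI}\}$ at each face, and its closure under the relevant pushforward/pullback operations is precisely the b-fibration property of $\wh\beta$.

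\textbf{Correction and polyhomogeneity.} A Borel summation then yields an approximate solution $\tilde u$ solving~\eqref{e:hyp} with error $\calO(\rho^N)$, where $\rho=\prod_{\calI}\rho_{\calI}$ is a total boundary defining function for the faces over $F_{\max}$ and $N$ is arbitrary; setting $\tilde g_0=e^{2\tilde u}g_{0,\frakp}$ and writing the exact solution as $u=\tilde u+v$ reduces matters to~\eqref{e:hypnl}. The fiberwise maximum-principle argument of Proposition~\ref{p:maxPr} applies verbatim and produces the unique bounded $v$ with $|v|\le C_0\rho^N$. Polyhomogeneity then follows from the commutator argument already used for $k=2$: applying $W\in\calV_b(\calC_k)$ to~\eqref{e:hypnl}, using $\Delta_{\tilde g_0}=e^{-2\tilde u}\Delta_{g_{0,\frakp}}$, bounding $|Wf|$, $|\Delta_{\tilde g_0}v|$ and $|W\tilde u|$, and absorbing $WQ(v)=\lambda Wv$ with $|\lambda|\le C\rho^N$ onto the left-hand side, gives $|Wv|\le C'\rho^N$ by the maximum principle; iterating shows $v$ is conormal of order $N$, and since $N$ is arbitrary, $u$ is polyhomogeneous on $\calC_k$.

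\textbf{Main obstacle.} The principal difficulty is the bookkeeping for the formal expansions across the iterated corner structure: one must propagate the expansion correctly through each corner of the tower of hemispheres, verifying that the right-hand side of the equation for every new coefficient contains no indicial term of the flat conic Laplacian on the hemisphere in question (so that no spurious $\log$ terms are created) and that the remaining harmonic-polynomial ambiguity at each face is pinned down uniquely by smoothness across the adjacent face together with the previously determined coefficients. Arranging the iterated background metric so that the rescaled operator at each $\frakC_{\calI}$ is genuinely the model flat conic Laplacian described above, and so that a single Borel sum kills the error to high order at all faces over $F_{\max}$ simultaneously, is the other place where care is needed; beyond that, the argument is a direct transcription of the two-point case.
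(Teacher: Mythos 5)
Your overall scheme for the coalescing region is the paper's: glue the polyhomogeneous flat family to the hyperbolic metric away from the collision as in \eqref{e:model}, build the formal expansion face by face through the tower of hemispheres by solving equations of the form \eqref{cce} on each flat conic face and fixing the indicial ambiguity by matching at the adjacent face, take a Borel sum, correct to an exact solution by the fiberwise maximum principle of Proposition~\ref{p:maxPr}, and upgrade to conormality by the commutator argument. The paper organizes this not as an induction on $k$ but as an induction on the depth of the node of the tree attached to each corner of $\calC_k$, which is in substance what your ``sweep outward through the tower'' does.

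The genuine gap is your treatment of corners that do not lie over $F_{\max}$. You assert that there the conformal factor ``decouples, up to a remainder smooth down to the relevant faces, into contributions each governed by one of the lower families $\calC_{|\calI_j|}$,'' and you then cite the inductive hypothesis for $\ell<k$. This fails as stated for two reasons. First, the curvature equation is nonlinear: unlike the flat case, where $G$ is literally a sum of logarithms, the solution with $k$ cone points near a partial-diagonal corner is not a sum of cluster solutions plus a smooth term, and this decoupling is essentially what has to be proved, not a consequence of comparability to a smooth background. Second, the inductive hypothesis concerns the family of hyperbolic metrics on $M$ whose conic data consists only of the cluster's cone points, and such a family need not exist: discarding the other cone points changes the Gauss--Bonnet quantity $\chi(M)+\sum_{i\in\calI_j}(\beta_i-1)$, which may become nonnegative even though the full $k$-point configuration is hyperbolic (merging a cluster preserves this quantity, which is why the limit metric $h_{0,\frakp'}$ does exist, but truncating to a cluster does not). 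The paper avoids both problems by never inducting on $k$: the construction is local near each corner of $\calC_k$, the far-away cone points enter only as smoothly varying parameters --- this is already how \S\ref{ss:hyp2} and Proposition~\ref{p:poly} handle precisely two colliding points inside an arbitrary $k$-point configuration --- and the induction runs over the height of the tree at the corner. Your own coalescing-case construction, carried out at every corner with its tree data, is exactly this and would make the decoupling step unnecessary; as written, however, that step is unjustified and the induction on $k$ it supports does not go through.
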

\begin{proof}
We begin as before, writing $g_{\frakp,\beta}=e^{2u}g_{0,\frakp}$, where $g_{0,\frakp}$ is the metric which is flat in a neighborhood
of the coalescing points. We have already proved that this flat metric is polyhomogeneous, so it suffices to prove that
$u$ is polyhomogeneous on $\calC_{k}$.

As in \S 2.5,  for any $q \in \calC_{k}$, there is a (non-augmented) tree $T$ and a terminal node on that tree encoding the 
chain of faces leading to $q$, where if $\calI$ is the node corresponding to $q$, then $q$ lies in the interior of $\frakC_{\calI}$. 
The argument below is an induction on the depth $N$ of the node, i.e., the height of the tree from the root up to this node. 

We have essentially already given the construction of an approximate solution when $N=1$.  More precisely, in the
extension of the approximate solution from the fiber $M_{\frakp'}$ to the face $\frakC_{12}$, that face is blown up at 
the two points where the incidence sets $F^\sigma_1$ and $F^\sigma_2$ meet $\frakC_{12}$.  When $k > 2$ (but we
are still considering the case $N=1$, the corresponding construction involves blowing up $\frakC_{12\ldots k}$ at 
the $k$ distinct points where the $F^\sigma_j$ meet this face.  The induced metric on $\frakC_{12\ldots k}$
is flat, with $k$ conic singularities at these intersection points, and a complete conic structure near the
outer boundary. The solvability of the sequence of equations to determine the expansion at this face
proceeds exactly as before. 

Suppose now that we have described how to carry the construction out for all trees of height strictly less than $N$.
Let $T$ be a tree with height $N$ and $\calI$ a terminal node (so that only singleton incident sets $F^\sigma_j$, $j \in \calI$
intersect it).  There is a maximal ascending chain $\calI = \calI_{N}\subset \dots \subset \calI_{1}$, and by
induction we have constructed the full series expansion for the approximate solution near each of the faces 
$\frakC_{\calI_\ell}$, $\ell \leq N-1$.  In particular, there is a complete series at the penultimate face
$\frakC_{\calI_{N-1}}$, which is a hemisphere blown up at $\tau$ points, where $\tau$ is the number of terminal
vertices emanating from the vertex $\calI_{N-1}$ in $T$.  This face carries a flat metric $g_{N-1}$, which has 
incomplete conic singularities at these $\tau$ interior boundaries as well as a complete conic structure at its outer boundary. 

We now choose the terms in the series expansion at $\frakC_{\calI_N}$.  This process is again almost identical to the 
one for $k=2$.   To simplify notation, drop subscripts, and let $r$ and $s$ denote the radial variable to 
$\frakC_{\calI_N} \cap \frakC_{\calI_{N-1}}$ in $\frakC_{\calI_{N-1}}$ and $\frakC_{\calI_N}$, respectively. No analogue of 
Lemma \ref{l:faceI} is necessary here since $g_{N-1} = dr^2 + \be^2_N r^2 d\theta^2$ near this boundary.  The 
approximate solution $u$ has an expansion $u \sim \sum s^j u_j'$ as in Proposition \ref{p:Mp}.  These coefficients
are then used to determine the boundary values for the coefficients in the expansion 
$u \sim \sum R^\alpha u_\alpha''$ as $s \to 0$; here $R$ is the radial variable to $\frakC_{\calI_N}$ (which restricts to 
$\calC_{\calI_{N-1}}$ as $r$).  
\begin{figure}[h]
\centering
\includegraphics[width = 0.7\textwidth]{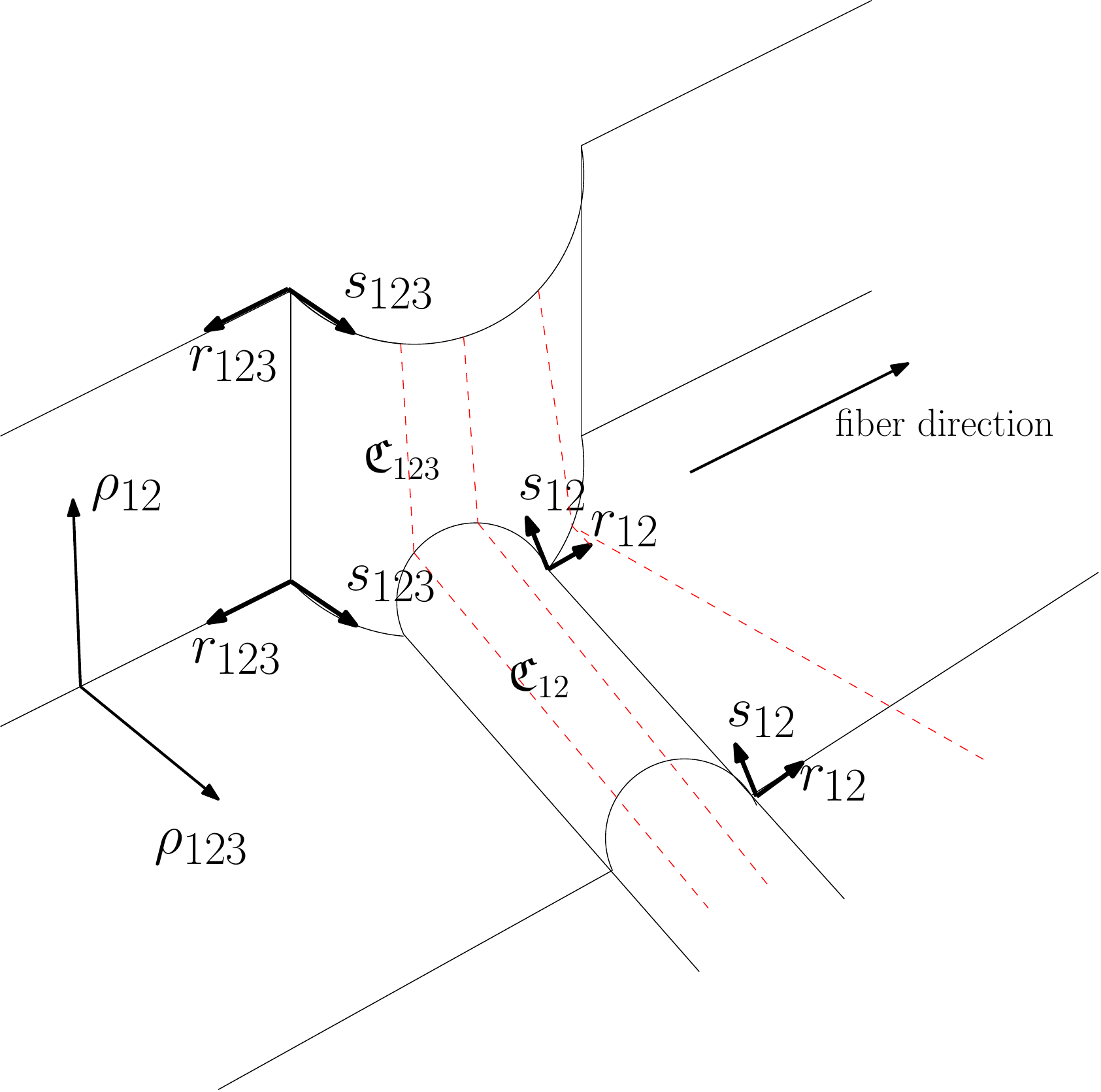}
\caption{Boundary faces and defining functions of $\calC_3$}
\label{doublestack}
\end{figure}
Rewrite this expansion as $u \sim \sum \rho^\alpha \tilde{u}_\alpha''$ (so we can commute 
the Laplacian past the powers of $\rho$).  In terms of the defining functions $R_j$ for the intermediate faces $\frakC_{\calI_j}$, 
where $R_N = R$, $R_{N-1} = s$ and $R_0$ equal to a defining function for $M_{\frakp'}$, we can 
take $\rho = R_0 \ldots R_N$, so that 
\[
u_\alpha'' = \tilde{u}_\alpha'' (R_0 \ldots R_{N-1})^\alpha.
\]
However, on $\frakC_{\calI_N}$, $R_0, \ldots, R_{N-2}$ are all constant, so more simply  $u_\alpha'' = \tilde{u}_\alpha'' s^\alpha$. 
This puts us in the same situation as before, where we must show that $\tilde{u}_\alpha'' = 0$ unless $\alpha \in \NN$,
and in that case, $\tilde{u}_\alpha''$ is a polynomial of order $\alpha$ so that $u_\alpha''$ is smooth
up to $\frakC_{\calI_{N-1}}$.   The calculations and arguments here are just as before. 

This paragraph does not conceal any delicate points; the admittedly intricate discussion carried out to
extend the expansion from $M_{\frakp'}$ to $\frakC_{12}$ adapts directly to the extension 
from $\frakC_{\calI_{N-1}}$ to $\frakC_{\calI_N}$. 

Now take a Borel sum of this multi-series at all the faces of $\calC_k$ to obtain a function $\tilde{u}$ for which
\[
\Delta_{g_{0,\frakp}} \tilde{u} + K_{0,\frakp} + e^{2\tilde{u}} = \calO(\rho^\ell) \ \ \mbox{for all}\ \ell \geq 0.
\]
Writing $\tilde{g}_0 = e^{2\tilde{u}} g_{0,\frakp}$, we  then define $v$ uniquely by 
\[
\Delta_{\tilde{g}_0} v + K_{\tilde{g}_0} + e^{2v} = 0
\]
with $|v| \leq C_\ell \rho^\ell$ for any $\ell$.  This shows that the solution $u$ is polyhomogeneous on 
$\calC_k$. 
\end{proof}

\section{Spherical metrics: cone angles less than $2\pi$}\label{s:sph}
We conclude this paper by extending Theorems \ref{flat} and \ref{t:hyp} to the spherical case.
As we have explained earlier, the existence theory for spherical cone metrics is completely understood only
when all cone angles are less than $2\pi$, so we restrict ourselves to that case here.  
In a sequel to this paper we investigate this problem for spherical metrics with large cone angles. The results
in that case are considerably more intricate and of a slightly different nature than the considerations here. 

First note that if $\vec \beta \in (0,1)^{k}$, then the cone angle produced by merging any subset of these must still have 
cone angle less than $2\pi$: 
$$
2\pi \left( \sum_{i\in\calI}(\beta_{i}-1)+1 \right) \in (0,2\pi), \forall \calI \subset \{1,\dots, k\}.
$$
We also recall that a (spherical) football is the spherical suspension of a circle of length $2\pi\beta$; it is a surface of genus zero 
with two antipodal conic points, each with angle $2\pi\beta$.  This angle may be any positive number. 

The main regularity theorem and its proof are very similar to those in the hyperbolic case.  Indeed, the only real
difference is that it is no longer immediately obvious that the linearized operator is invertible, but fortunately,
this is not the case. 
\begin{lemma}[\cite{MW}, Proposition 13]\label{l:invertible}
If $g$ is a spherical cone metric on the sphere with all cone angles less than $2\pi$, then the first nonzero eigenvalue
for the Friedrichs extension of its Laplacian is always strictly greater than $2$, unless $(S^2,g)$ is a football, in
which case this eigenvalue is exactly equal to $2$. 
\end{lemma}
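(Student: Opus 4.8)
The plan is to prove this by a Lichnerowicz--Obata argument adapted to the conical setting, exploiting the regularity of Friedrichs eigenfunctions recalled in \S\ref{s:analysis}.

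For the lower bound I would take a first eigenfunction $\phi$ for the Friedrichs extension, $\Delta_g\phi = \lambda_1\phi$ with $\int_M\phi = 0$ and $\int_M\phi^2 = 1$, and integrate the Bochner identity over $M\setminus\bigcup_i B_\epsilon(p_i)$. Away from the cone points $g$ is round, so $\Ric_g = g$, and the refined Kato inequality $|\nabla^2\phi|^2 \ge \tfrac12(\Delta_g\phi)^2$ in dimension two gives the interior estimate. The boundary circles $\partial B_\epsilon(p_i)$ produce correction terms, but since $\beta_i < 1$ the Friedrichs condition forces $\phi = a^i_0 + \calO(r^{1/\beta_i})$ with $1/\beta_i > 1$, so $|\nabla\phi| = \calO(r^{1/\beta_i - 1}) \to 0$ and all of these terms are $\calO(\epsilon^{2/\beta_i - 2}) \to 0$ as $\epsilon \to 0$. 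Letting $\epsilon \to 0$ then yields $\lambda_1 \ge 2$.

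Next I would address rigidity. If $\lambda_1 = 2$ then the integrated Bochner identity is an equality, forcing the Obata equation $\nabla^2\phi = -\phi\, g$ on $M\setminus\frakp$. From this one gets that $|\nabla\phi|^2 + \phi^2$ is constant, the $\nabla\phi$-flow lines are unit-speed geodesics joining the two critical points of $\phi$, and $g = dr^2 + c^2\sin^2 r\, d\theta^2$ on $(0,\pi)\times S^1$ with $\phi = \cos r$. The only possible singular points are then the poles $r = 0,\pi$, each a cone point of angle $2\pi c$, so $g$ is a spherical football with $k = 2$ and $\beta_1 = \beta_2 = c$. Conversely, on a football $g = dr^2 + \beta^2\sin^2 r\, d\theta^2$ the bounded function $\cos r$ lies in the Friedrichs domain and satisfies $\Delta_g(\cos r) = 2\cos r$, while a Sturm comparison in the $e^{\pm i\theta}$ sectors (where the potential $\beta^{-2}\sin^{-2}r$ exceeds its round-sphere value when $\beta < 1$) shows every other nonzero eigenvalue is $> 2$; hence $\lambda_1 = 2$ precisely for footballs.

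The hard part will be the boundary-term bookkeeping at the cone points: the Bochner boundary integrals involve the full Hessian of $\phi$, not just its gradient, so I would need to feed in the complete indicial expansion from \S\ref{s:analysis} and verify that the exponent $2/\beta_i - 2 > 0$ kills every such term --- this is exactly where $\beta_i < 1$ is used, and is the reason the estimate degrades for larger angles. A cleaner alternative I would keep in reserve is to note that a spherical metric with all cone angles $\le 2\pi$ is an Alexandrov surface of curvature $\ge 1$, hence an $\mathrm{RCD}(1,2)$ space, and to quote the Lichnerowicz inequality and the Obata/spherical-suspension rigidity for such spaces directly; identifying a spherical suspension over a one-dimensional link with a football is then immediate, and this route sidesteps the delicate conical Bochner analysis entirely.
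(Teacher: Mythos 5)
There is nothing in the paper to compare against here: Lemma~\ref{l:invertible} is imported verbatim from \cite{MW} (Proposition 13) and is not reproved in this paper, so your argument has to stand on its own. Judged that way, your conical Lichnerowicz--Obata route is sound, and you have put your finger on exactly the right quantitative point: with the Friedrichs expansion recalled in \S\ref{s:analysis}, the Bochner boundary integrals over $\partial B_\varepsilon(p_i)$ are $\calO(\varepsilon^{2/\beta_i-2})$ and $\int|\nabla^2\phi|^2$ is finite precisely because $\beta_i<1$; this is where the angle hypothesis enters and why the statement degrades for larger angles. A few details need tightening. First, for $\beta_i<1/2$ there is no $r^{1/\beta_i}$ term below order $2$, so the expansion is $\phi=a_0^i+\calO(r^2)$ rather than $a_0^i+\calO(r^{1/\beta_i})$ --- this only improves the estimates, but your bookkeeping should be stated for both regimes (and the inequality $|\nabla^2\phi|^2\ge\tfrac12(\Delta\phi)^2$ is just Cauchy--Schwarz on the trace, not a refined Kato inequality). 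Second, in the rigidity step you should say explicitly that $|\nabla\phi|\to0$ at every cone point (again from the expansion, since $\min\{1/\beta_i,2\}>1$), so the Obata identity $|\nabla\phi|^2+\phi^2\equiv\mathrm{const}$ forces each cone point to be an extremum of $\phi$; since the extrema of a solution of $\nabla^2\phi=-\phi\,g$ are isolated single points, there are at most two cone points and they sit at the poles of the warped product, which is what actually pins down the football (with the cone-free case being the round sphere, excluded since $\frakp\neq\emptyset$). Your converse on the football is correct: the $k\neq0$ Fourier modes give eigenvalues $(|k|/\beta+m)(|k|/\beta+m+1)>2$ when $\beta<1$, the zonal modes give $2,6,\dots$, and $\cos\tilde r=a_0+\calO(\frakr^2)$ does lie in the Friedrichs domain.

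On the Alexandrov/RCD shortcut you keep in reserve: it works, but two points are not automatic. You must identify the Friedrichs extension used in this paper with the Cheeger-energy Laplacian of the singular space (fine here because finitely many points have zero capacity in dimension two, so the two form domains coincide, but this should be said), and when invoking suspension rigidity you must rule out the spherical suspension of a segment, which is a bigon with boundary rather than a closed surface; only the suspension of a circle of length $2\pi\beta$, i.e.\ the football, can occur. With those caveats either route gives a complete proof, and your Bochner argument is likely the closer one in spirit to the proof in \cite{MW} that this paper quotes.
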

In our second paper the primary focus will be on the extensions of this analysis when the spectrum. 

\begin{remark}
We shall restrict here to the case $k \geq 3$ and to the fibers of $\calC_k$ lying in the dense open subset $\calC_k'$ 
which do not lie above the preimage in $\calE_k$ of any of the following sets:
\begin{itemize}
\item[i)] the complete diagonal $\Delta_{1 \ldots k}$, or 
\item[ii)] the intersection of any two partial diagonals $\Delta_{\calI} \cap \Delta_{\calJ}$, $\calI \cap \calJ = \emptyset$, 
$\calI \cup \calJ = \{1, \ldots, k\}$ (we include the case where either $|\calI|=1$ or $|\calJ| = 1$), 
\end{itemize}
These two cases correspond to the degenerations where the $k$ points merge into either one or two points. 
\end{remark}

The basic existence and uniqueness result in this setting is well-known:
\begin{lemma}[\cite{Tr, LT}]\label{l:uniqueness}
If $k \geq 3$ and all cone angles are less than $2\pi$, then there exists a unique spherical cone metric on $M$ 
provided $\chi(M,\vec \beta)>0$, and when $M = \bS^2$, the cone angles also satisfy the Troyanov condition:
\[
\mbox{for each $j$,}\ \  \beta_j - 1 > \sum_{ i \neq j} (\beta_i-1).
\]
If $k=2$ and $M=\bS^{2}$, there is a spherical metric if and only if $\beta_1 = \beta_2$, and this metric
is unique up to conformal dilation. 
\end{lemma}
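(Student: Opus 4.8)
Note first that the hypothesis $\chi(M,\vec\beta) > 0$ together with $\vec\beta \in (0,1)^k$ forces $\sum_i(\beta_i-1) > -\chi(M)$ with the left side negative, so necessarily $M = \bS^2$; thus the first assertion really concerns only the sphere. The plan is to recall the standard reduction of the prescribed-singularity problem to a scalar Liouville equation and then to quote the two ingredients which are genuinely due to \cite{Tr} and \cite{LT}: a conical Moser--Trudinger inequality for existence, and Alexandrov-type rigidity for uniqueness. Concretely, I would fix a smooth background metric $g_0$ in the conformal class $\frakc$ with Gauss curvature $K_0$, together with the Green's-function combination $G$ of \S\ref{s:flat} so that $g_1 = e^{2G}g_0$ is a conic metric with cone angles $2\pi\beta_i$ at $\frakp$ (degenerate of the standard conic type and of finite area since each $\beta_i > 0$). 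Writing $g = e^{2v}g_1$, the requirement that $g$ be spherical of area $A$ becomes, with $\Delta_{g_1}$ the nonnegative-spectrum Laplacian,
\[
\Delta_{g_1} v + K_{g_1} = \tfrac{2\pi\chi(M,\vec\beta)}{A}\, e^{2v} \quad \text{on } M\setminus\frakp,
\]
with $v$ bounded near each $p_i$.

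For existence when $k \ge 3$ I would run the standard variational argument: solutions of this equation are, up to normalization of constants, critical points of
\[
J(v) = \tfrac12\int_M |\nabla v|_{g_1}^2\, dV_{g_1} + \int_M K_{g_1}\, v\, dV_{g_1} - 2\pi\chi(M,\vec\beta)\,\log\!\left(\tfrac1A\int_M e^{2v}\, dV_{g_1}\right)
\]
on $H^1(M,g_1)$. The decisive point is Troyanov's conical Moser--Trudinger inequality, whose sharp constant at the cone point $p_i$ is governed by $\min\{1,\beta_i\} = \beta_i$: this inequality shows that $J$ is coercive precisely under the hypotheses $\chi(M,\vec\beta) > 0$ and, for $M = \bS^2$, the Troyanov condition $\beta_j - 1 > \sum_{i\neq j}(\beta_i-1)$ for each $j$, since these are exactly the conditions ruling out total concentration of the mass $\int e^{2v}$ at a single cone point. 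Coercivity and weak lower semicontinuity then produce a minimizer, and elliptic regularity away from $\frakp$ together with the conic structure of $g_1$ upgrades it to a genuine spherical cone metric with the prescribed angles. All of this is carried out in \cite{Tr} and \cite{LT}, so in the write-up I would simply cite those references rather than reproduce the estimates.

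The main obstacle is uniqueness, since in the spherical case $J$ is not convex and the easy maximum-principle argument of the $K\le 0$ case fails. Here I would follow Luo--Tian \cite{LT}: a spherical cone metric on $\bS^2$ with $k\ge 3$ cone angles in $(0,2\pi)$ is isometric to the boundary of a bounded convex polytope in the round $\bS^3$ (equivalently, to a doubled convex spherical polygon), and Alexandrov's uniqueness theorem for such polytopes then pins down the metric. An alternative, purely analytic route uses the spectral gap of Lemma~\ref{l:invertible}: since $k\ge 3$ excludes footballs, the first nonzero Friedrichs eigenvalue of every such metric exceeds $2$, so $\Delta_g - 2$ is invertible at every solution and the solution set is discrete; deforming the cone angles along the scaling $\beta_i - 1 \mapsto t(\beta_i-1)$, $t\in(0,1]$, which preserves both $\chi(M,\vec\beta)>0$ and the Troyanov condition and never creates a football, down to the small-angle regime of \cite{MW}, where uniqueness is already known, then propagates uniqueness by continuity. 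Either argument suffices; I would present the polytope one.

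Finally, for $k=2$ and $M=\bS^2$: the spherical suspension of a circle of length $2\pi\beta$ is an explicit spherical metric with two antipodal cone points of equal angle $2\pi\beta$, which gives existence when $\beta_1 = \beta_2 = \beta$. Conversely, a spherical metric on $\bS^2$ with exactly two cone points has a developing map into the round $\bS^2$ whose monodromy is a single rotation; following this rotation around a loop encircling the two punctures forces the two cone angles to coincide, so no such metric exists when $\beta_1 \ne \beta_2$ (this is also a degenerate case of the Mondello--Panov constraint \eqref{e:MP}). The residual freedom is precisely the action of the conformal automorphism group $\CC^\ast$ of $(\bS^2,\{0,\infty\})$ on the standard football, which accounts for the qualifier ``up to conformal dilation.''
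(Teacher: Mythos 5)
There is nothing in the paper to compare against here: Lemma~\ref{l:uniqueness} is stated as a known result and attributed to \cite{Tr} and \cite{LT} (with the two-point football case really coming from \cite{Tr2}), and no proof is given. Your reduction to the Liouville equation and the variational existence argument via Troyanov's conical Moser--Trudinger inequality, with coercivity exactly under $\chi(M,\vec\beta)>0$ and the condition \eqref{e:Troyanov}, is an accurate summary of what those references do, and your opening observation that the hypotheses force $M=\bS^2$ is correct.

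The step I would not accept as written is the uniqueness argument you say you would present. Alexandrov rigidity says that a given intrinsic metric of curvature $\geq 1$ with cone angles less than $2\pi$ has a unique realization as the boundary of a convex body in $\bS^3$ up to ambient isometry; it takes the metric as input. What the lemma requires is that two spherical cone metrics in the \emph{same conformal class}, with the \emph{same} cone points and angles, coincide, and the polytope statement gives no link between the conformal data and the convex body, so it cannot ``pin down the metric'' by itself. The uniqueness in \cite{LT} is an analytic statement about the Liouville equation, and your alternative route (the spectral gap of Lemma~\ref{l:invertible} making $\Delta_g-2$ invertible at every solution, plus a deformation of the angles to the small-angle regime) is much closer to it in spirit --- but as sketched it also has a gap: discreteness plus continuity along the path $\beta_i-1\mapsto t(\beta_i-1)$ does not by itself prevent solution branches from appearing or escaping to infinity; one needs a compactness/properness statement for the family of solutions along the deformation (or a degree count) to propagate uniqueness. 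Finally, in the $k=2$ case the monodromy argument only forces $\beta_1\equiv\pm\beta_2$ modulo integers, so a further step (or simply the classification in \cite{Tr2}) is needed to conclude $\beta_1=\beta_2$. Since the paper itself treats all of this as quoted background, the clean fix is to state the lemma as a citation, as the paper does, rather than to offer the polytope argument as a proof.
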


\begin{theorem}\label{t:sph}
Let $k \geq 3$, restrict to the open dense subset $\calC_k'$ of the configuration family and fix any $k$-tuple $\vec \beta$ of
cone angle parameters lying in the Troyanov region.  Then the family of fiberwise spherical metrics with these fixed cone angles 
is polyhomogeneous on $\calC_{k}'$.
\end{theorem}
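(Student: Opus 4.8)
The proof runs closely parallel to the hyperbolic case treated in \S\ref{ss:hyp2}--\S\ref{ss:hypk}, the one genuinely new feature being that the maximum principle must be replaced by the spectral gap of Lemma~\ref{l:invertible}. As in the hyperbolic argument, fix the background metric $g_{0,\frakp}$ that is flat in a neighborhood of each coalescing cluster and equal to a fixed fiberwise spherical cone metric away from it, glued across a transition region as in~\eqref{e:model}, and write the fiberwise spherical metric as $e^{2u}g_{0,\frakp}$. By Theorem~\ref{flat} the flat pieces of $g_{0,\frakp}$, hence $g_{0,\frakp}$ itself, are polyhomogeneous on $\calC_k$, so it suffices to prove that $u$ is polyhomogeneous on $\calC_k'$. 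Polyhomogeneity of $u$ away from the collision loci is contained in \cite{MW}, so the statement localizes near each face $\frakC_\calI$, and we induct on the depth of the corresponding node in the tree associated to the corner, exactly as in the proof of Theorem~\ref{t:hyp}.

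\emph{Step 1: the formal solution.} Here the curvature equation reads $\Delta_{g_{0,\frakp}}u - e^{2u} + K_{0,\rho} = 0$, with $K_{0,\rho}$ equal to $0$ on the flat region and $1$ on the spherical region. One determines the Taylor coefficients of $u$ at $M_{\frakp'}$ and at the nested front faces $\frakC_{\calI_\ell}$ successively in powers of $\rho$, following the computations of Lemma~\ref{l:faceI}, Proposition~\ref{p:Mp}, and the ensuing lemmas that fix the index set. The crucial structural point is unchanged: on each front face the rescaled operator $\rho^{2\beta}\Delta_{g_{0,\frakp}}$ is a \emph{flat} conic Laplacian---the zeroth-order term $\mp 2u$ and the curvature term are of strictly lower order in $\rho$---so the indicial roots, the absence of logarithmic terms forced by the $\rho^{2\beta}$ shift, and the corner-matching conditions are identical to the hyperbolic case, only the signs of the inhomogeneous terms being different. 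A Borel sum yields $\tilde u$, polyhomogeneous on $\calC_k'$, with $\Delta_{g_{0,\frakp}}\tilde u - e^{2\tilde u} + K_{0,\rho} = \calO(\rho^N)$ for every $N$; put $\tilde g_0 = e^{2\tilde u}g_{0,\frakp}$.

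\emph{Step 2: correction to an exact solution; the main obstacle.} One must solve $\calL_\rho v := (\Delta_{\tilde g_0}-2)v = Q(v) - f$, where $f = K_{\tilde g_0}-1 = \calO(\rho^N)$ and $Q(v) = e^{2v}-1-2v$, with $|v|\le C_N\rho^N$. Since $\calL_\rho$ has nontrivial kernel on a round sphere, the maximum principle is unavailable; instead we use that $\calL_\rho$ is invertible, with operator norm of the inverse bounded uniformly on compact subsets of $\calC_k'$. This is where the restriction to $\calC_k'$ enters: by the exclusion of the loci over $\Delta_{1\ldots k}$ and over $\Delta_\calI\cap\Delta_\calJ$ with $\calI\sqcup\calJ = \{1,\ldots,k\}$, every fiber retains at least three cone points on the base surface and the bubbles carried by the front faces are flat conic surfaces, so no football arises in the degeneration and, by Lemma~\ref{l:invertible}, the first nonzero fiberwise Friedrichs eigenvalue exceeds $2+c$ for a $c>0$ uniform on compact subsets of $\calC_k'$. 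The real work is the \emph{uniform} bound $\|\calL_\rho^{-1}\|\le C$ as $\rho\to 0$, in the fiberwise Friedrichs--H\"older spaces of \S\ref{s:analysis}. This follows by identifying the model operator on the broken limiting fiber: it is block-diagonal, equal to $\Delta-2$ on the spherical part $M_{\frakp'}$, invertible by the non-football case of Lemma~\ref{l:invertible}, and equal to the rescaled flat conic Laplacian on each hemispherical bubble, invertible in weighted $b$-spaces once the weights are chosen to miss the indicial roots, by the Fredholm theory recalled in \S\ref{s:analysis}, with the blocks matched across the corners; a standard gluing/parametrix construction then promotes invertibility of this model to the uniform bound for small $\rho$. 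Granting this, the iteration $v_0 = 0$, $\calL_\rho v_{j+1} = Q(v_j) - f$ as in Proposition~\ref{p:maxPr} converges---the contraction estimate now coming from $\|\calL_\rho^{-1}\|\le C$ together with $|Q(v)|\le C'|v|^2$ for $|v|\le 1$---and produces the (unique) solution with $|v|\le C_N\rho^N$.

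\emph{Step 3: polyhomogeneity.} Apply $b$-vector fields $W_1,\ldots,W_m\in\calV_b(\calC_k)$ to $\calL_\rho v = Q(v)-f$. Using $\Delta_{\tilde g_0} = e^{-2\tilde u}\Delta_{g_{0,\frakp}}$ and $|\Delta_{\tilde g_0}v| = |2v + Q(v) - f| = \calO(\rho^N)$, one obtains $\calL_\rho(W_1\cdots W_m v) = g_m + \lambda\, W_1\cdots W_m v$ with $g_m = \calO(\rho^N)$ and $\lambda$ smooth and $\calO(\rho^N)$; absorbing the last term perturbs $\calL_\rho$ by an operator of small norm, still uniformly invertible, so $W_1\cdots W_m v = \calO(\rho^N)$. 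As $N$ and $m$ are arbitrary, $v$ is conormal of all orders; combined with the formal expansion of Step 1, this shows $u = \tilde u + v$ is polyhomogeneous on $\calC_k'$, proving Theorem~\ref{t:sph}.
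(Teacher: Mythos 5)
Your proposal is correct and follows essentially the same route as the paper: the same glued background metric and formal expansion carried over from the hyperbolic case, Lemma~\ref{l:invertible} replacing the maximum principle, and the commutator argument of Proposition~\ref{p:poly} for conormality. The only cosmetic difference is that where the paper invokes continuity of eigenvalues plus the implicit function theorem to solve for the correction term, you argue via a uniform bound on $(\Delta_{\tilde g_0}-2)^{-1}$ and a Picard iteration as in Proposition~\ref{p:maxPr}, which is a slightly more explicit rendering of the same step.
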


\begin{proof}
As in Theorem \ref{t:hyp}, it suffices to work locally near any point $q$ in a corner $\cap_{i=1}^{\ell}\frakC_{\calI_{\ell}}$,
where $\calI_{1}\supset \dots \supset \calI_{\ell}$ are the nodes associated to $q$ in the associated tree.

Start with a model metric $g_{0,\frakp}$, defined as in ~\eqref{e:model}, which is obtained by gluing the family of (lifted)
flat conic metrics to the family of spherical metrics away from the coalescence locus: 
$$
g_{0,\frakp}=\chi g_{0,\frakp}^{fl} + (1-\chi) g^{sph}_{\frakp'}.
$$
Next construct approximate solutions $u_{N}$ using exactly the same argument as before. Note that we must use 
Lemma~\ref{l:uniqueness} when determining the expansion on the original surface,  and replace $\sinh \tilde r$ 
by $\sin \tilde r$ in~\eqref{e:sinh}. Otherwise, the steps are carried out in exactly the same way. 
Altogether, taking a Borel sum of the resulting series, we obtain an approximate solution $\tilde u$. 

We next obtain the correction term $v$, which solves
$$
\Delta_{\tilde g_{0}}v+e^{2v}+K_{\tilde g_{0}}=0,  \ \tilde g_{0}=e^{2\tilde u}g_{0,\frakp}.
$$
By assumption, the linearization of this equation, $\Delta_{\tilde g_{0}}-2$, is invertible on the singular fiber,
hence we may use Lemma~\ref{l:invertible}.  Using continuity of the eigenvalues, $\Delta_{\tilde g_{0}}-2$ remains invertible
on nearby fibers. Hence by the implicit function theorem there exists a solution $v$ which depends smoothly
on all parameters. 

The arguments of Proposition~\ref{p:maxPr} and Proposition~\ref{p:poly} now show that $|v| \leq C_{0}\rho^{\ell}$ 
for any $\ell\in \NN$, with similar estimates for all $b$-derivatives, so $v$ vanishes to infinite order at these boundaries.
This shows that $u = \tilde{u} + v$, and hence 
$
g_{\frakp}=e^{2u}g_{0,\frakp},
$
is polyhomogeneous.
\end{proof}

\appendix
\section{b-fibrations and polyhomogeneity}
In this appendix we recall some basic facts about manifold with corners, $b$-fibrations, and polyhomogeneous functions. For details 
we refer to~\cite{Me} or \cite[Appendix]{M-edge}. 
\begin{definition}
A space $X$ is called a \emph{manifold with corners} of dimension $n$ if, at each point there exists a nonnegative integer $k$ such 
that $X$ is modeled diffeomorphically near that point by a neighborhood of the origin in the product $(\RR^{+})^{k}\times \RR^{n-k}$.
The boundary faces and corners of $X$ correspond to the boundaries and corners in these local representations. We list the
boundary hypersurfaces of $X$ as $\{X_i\}_{i=1}^J$. The corners of codimension $\ell$ are the submanifolds 
$X_{i_1} \cap \ldots \cap X_{i_\ell}$. We require that all boundary faces and corners be embedded; this is merely a simplifying 
assumption to avoid talking about special cases.  However, with this assumption we can choose for each $i$ a smooth function
$\rho_i$ which is positive on $X \setminus X_i$ and which vanishes simply at $X_i$. This is called a boundary defining function
for that face. If $s = (s_1,\ldots, s_J)\in \CC^J$, then $\rho^s := \rho_1^{s_1} \ldots \rho_J^{s_J}$. 
\end{definition}
The spaces $\calE_{k}$ and $\calC_{k}$ constructed in~\S\ref{s:resolution} are manifolds with boundaries, with boundary 
{\it faces} $\{\frakC_{\calI}\}$ and $\{F_{\calI}\}$, where in each case the index $\calI$ ranges over all subsets of $\{1,\dots, k\}$.
We do not introduce a special notation for the corners. 

There is a whole ecosystem of geometric and analytic objects naturally defined on a manifold with corners. One key
object is the following.
\begin{definition}
The space of $b$-vector fields $\calV_b(X)$ on $X$ is the space of all smooth vector fields on $X$ which are tangent to all boundaries. 
The $b$-tangent bundle ${}^{b}TX$ is a canonical bundle whose full space of smooth sections is precisely $\calV_b(X)$. Its dual
is the $b$-cotangent bundle ${}^{b}T^{*}X$.
\end{definition}
The local coordinate expression of a general $b$-vector field is given in~\S\ref{ss:bvector}. 

We next come to the first of two most natural replacements for the space of smooth functions on $X$. 
\begin{definition}[Conormality] The bounded conormal functions on $X$ is the space
\begin{equation}
\calA^{0}(X): = \{u: V_{1} \dots V_{\ell} u \in L^{\infty}(X), \ \forall \, V_{i}\in \calV_{b},  \ \mbox{and}\ \ell\in \NN\}.
\end{equation}
For any pair of multi-indices $s \in \CC^J$, $p \in \NN_0^J$, we also define
\begin{equation}
\calA^{s,p}(X):=\rho^{s}(\log \rho)^{p}\calA^{0}(X).
\end{equation}
Finally, 
\begin{equation}
\calA^{*}(X):=\bigcup\limits_{s,p} \calA^{s,p}(X).
\end{equation}
\end{definition}
Note that any element of $\calA^*(X)$ is smooth in the interior of $X$. 

There is a subclass of $\calA^*(X)$ which is more useful in practice: the space of polyhomogeneous functions. These are 
associated to an index family:
\begin{definition} An \emph{index set} $\calE$ is a discrete subset $\{(s_j, p_j) \in \CC \times \NN_{0}$ such that
\begin{equation}
\mbox{if}\ (s_{j},p_{j})\in E, \ \mbox{and}\ |(s_{j},p_{j})|\rightarrow \infty \ \mbox{then}\ \Re(s_{j}) \rightarrow \infty.
\end{equation}

Now suppose that $\calE=\{E_{1}, \dots, E_{J}\}$ is a collection of index sets, one for each boundary face of $X$; we call
this an index family.  The space of \emph{polyhomogeneous} functions with index family $\calE$ (or more simply
$\calE$-smooth functions) on $X$, $\calA_{\phg}^{\calE}(X)$, consists of the elements of $\calA^{*}(X)$ with asymptotic
expansions with exponents given by the elements of $\calE$, i.e., $u \in \calA_{\phg}^{\calE}$ if
\[
u \sim \sum_{ (s,p) \in \calE} \rho^s (\log \rho)^p u_{s,p},
\]
where each $u_{s,p}$ is a smooth function the relevant corner of $X$.  This expansion is meant in the classical sense, 
and has a product type at the corners; it is a tractable replacement for a Taylor expansion at the faces and corners. 

We write $u \in \calA_{\phg}^*(X)$ when the index set is not specified (or is obvious from the context).
\end{definition}

It follows readily from this definition, and is useful in applications, to note that if $u \in \calA_{\phg}$, then for
each $i$, any coefficient $u^{(i)}_{s,p}$, which is simply the coefficient of $\rho_i^{s_i}(\log \rho_i)^{p_i}$ in the
expansion at $X_i$, is itself a polyhomogeneous function on $X_i$ with index family $\calE^{(i)}$ obtained
by omitting $\calE_i$.  % Conversely, suppose $u \in \calA^*(X)$ and that near each $X_i$
% \[
% u \sim \sum_{s^i_j, p^i_j}  \rho_i^{s^i_j} (\log \rho_i)^{p^i_j} u_{s^i_j, p^i_j},
% \]
% where each coefficient $u_{s^i_j, p^i_j}$ is itself conormal on $\calA^*(X_i)$, then $u \in \calA_{\phg}^*$. 

% \begin{itemize}
% \item If $J=1$, then 
% \begin{equation}\label{e:polyexp}
%   u \in \calA^{\calE}_{\phg}(X) \Leftrightarrow u\sim \sum\limits_{(s,p)\in E_{1}}\rho^{s}(\log \rho)^{p}a_{s,p}(\rho,y),\ a_{s,p}\in \calC^{\infty}(X).
% \end{equation}
% \item For $J>1$, if $u \in \calA^{\calE}_{\phg}(X)$, then $u$ has an expansion~\eqref{e:polyexp} at each boundary face $X_{i}$ with index set $E_{i}$, and we require $a_{s,p}$ restricting to each ${X_{i}}$ is a polyhomogeneous function with index set $\calE'=\{E_{i'}\}_{i'\neq i, X_{i'}\cap X_{i}\neq \emptyset}$. 
% \end{itemize}

There is a convenient criterion for polyhomogeneity. 
\begin{proposition}\label{p:appendix}
If $u\in \calA^{*}(X)$ and for every $N > 0$, 
\begin{equation}
u-\sum_{\Re s<N, (s,p)\in \calE}\rho^{s}(\log \rho)^{p}u_{s,p}\in \rho^{N-1}\calA^{0}(X),
\end{equation}
then $u \in \calA_{\phg}^{\calE}(X)$. 
\end{proposition}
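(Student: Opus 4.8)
The statement is the standard criterion for upgrading conormality-plus-partial-expansions to genuine polyhomogeneity, and the plan is to prove it by localizing and then inducting on the number of boundary hypersurfaces meeting a corner.

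\emph{Reduction and base case.} Using a partition of unity subordinate to a cover by coordinate charts, it suffices to work near a single point of a corner of codimension $k$, i.e.\ to take $X = [0,\epsilon)^k \times \calU$ with coordinates $x = (x_1,\dots,x_k)$ and $\calU \subset \RR^{n-k}$ open, $\rho_i = x_i$, and total boundary defining function $\rho = x_1\cdots x_k$; the variables on $\calU$ enter only as smooth parameters and will be suppressed. Two elementary facts are used repeatedly. First, $\bigcap_{N\geq 0}\rho^N\calA^0(X)$ consists exactly of functions smooth up to the boundary and vanishing to infinite order at every face: boundedness of all $(x_i\del_{x_i})$- and parameter-derivatives together with $\calO(\rho^\infty)$ decay forces boundedness of all $\del_{x_i}$-derivatives with the same decay; in particular such functions are polyhomogeneous with every coefficient zero, so they may be discarded freely. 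Second, when $k=1$ the hypothesis is, up to a harmless shift of $N$, precisely the definition of $u\in\calA^{E_1}_\phg([0,\epsilon))$ (conormality of the remainder being built into both); moreover the coefficients $u_{s,p}$ are forced to be smooth in the parameters, since each is a limit of the form $\lim \rho^{-s}(\log\rho)^{-p}(u-\text{lower order terms})$ and differentiating under the limit is legitimate by conormality. This is the base of the induction.

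\emph{Inductive step.} Assume the result for corners of codimension $<k$ and fix $u$ on $[0,\epsilon)^k$ satisfying the hypothesis with index family $\calE = \{E_1,\dots,E_k\}$. Singling out $x_1$, reorganize the given finite partial sums as sums of $x_1^{s_1}(\log x_1)^{p_1}$, $(s_1,p_1)\in E_1$, times coefficients depending on $x' = (x_2,\dots,x_k)$. The first task is to promote this to an honest conormal expansion of $u$ at the face $\{x_1=0\}$: for each $(s_1,p_1)\in E_1$ one produces a coefficient $u_{s_1,p_1}\in\calA^*([0,\epsilon)^{k-1})$ — its naive definition is an infinite series in the $x'$-directions, which is turned into a genuine conormal function by Borel summation in the transverse total defining function $\rho' = x_2\cdots x_k$, using the first fact above — and then verifies that for every $M$
\[
u - \sum_{\Re s_1 < M,\ (s_1,p_1)\in E_1} x_1^{s_1}(\log x_1)^{p_1}\,u_{s_1,p_1}\ \in\ x_1^{M-1}\calA^0(X).
\]
This requires choosing, alongside the first $M$ powers of $x_1$, enough of the $x'$-expansion to subtract so that the hypothesis (which only controls a single remainder in $\rho^{N-1}\calA^0$) forces a remainder that is genuinely conormal on all of $X$ and decays like $x_1^{M-1}$, not merely like $\rho^{M-1}$. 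Having done this, each $u_{s_1,p_1}$ is a conormal function on $[0,\epsilon)^{k-1}$ which, by extracting coefficients of $x_1^{s_1}(\log x_1)^{p_1}$ from the hypothesis for $u$, satisfies the analogous hypothesis there with defining function $\rho'$ and index family $E' = \{E_2,\dots,E_k\}$. By induction $u_{s_1,p_1}\in\calA^{E'}_\phg([0,\epsilon)^{k-1})$, so $u$ has at $\{x_1=0\}$ an expansion with exponents in $E_1$, polyhomogeneous coefficients on that face, and conormal remainders that are $\calO(x_1^{M-1})$ for all $M$ — which is exactly the iterated definition of $u\in\calA^\calE_\phg(X)$. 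Patching the local pieces back together completes the proof.

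\emph{Main obstacle.} Everything formal in this argument is routine; the one genuinely technical point is the step inside the induction: converting the single ``total-order'' control $\rho^{N-1}\calA^0$ supplied by the hypothesis into separate, face-by-face expansions with conormal coefficients and conormal remainders. This is a bookkeeping argument — Borel-summing in the transverse directions to define the coefficients $u_{s_1,p_1}$, and juggling the vanishing orders in the different boundary directions so that $\rho^{N-1}$ control yields $x_1^{M-1}$ control after rearrangement — combined with the observation that conormality is stable under these operations. This is precisely the content carried out in \cite{Me} and \cite[Appendix]{M-edge}; what remains in the present setting is only to check that it applies on the manifolds with corners $\calE_k$ and $\calC_k$, which it does since the argument is entirely local near corners.
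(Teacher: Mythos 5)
The paper offers no proof of this proposition at all — it is simply recalled in the appendix with a pointer to \cite{Me} and \cite[Appendix]{M-edge} — and your outline reproduces the standard argument from exactly those sources: localize near a corner, induct on the boundary codimension, extract the coefficients at one face (Borel-summing in the transverse defining functions and using that $\bigcap_N\rho^N\calA^0$ consists of trivially polyhomogeneous functions), and rearrange the total-order remainder control into face-by-face control. Since the one genuinely technical step you defer is precisely the content of the references the paper itself cites, your proposal is correct in structure and, if anything, more detailed than what the paper provides.
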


We also define a distinguished class of mappings between manifolds with corners. 
\begin{definition}
Let $X, Y$ be manifolds with corners, with corresponding sets of boundary defining functions $\{r_{i}\}$ and $\{\rho_{j}\}$,
respectively, associated to the enumerations of boundary faces $\{X_i\}_{i \in \calI}$, $\{Y_j\}_{j \in \calJ}$. 
A map $f:X\rightarrow Y$ is called a \emph{b-fibration} if the following conditions are satisfied:
\begin{enumerate}
\item[(b-map)] For any index $j \in \calJ$, the pullback of the corresponding boundary defining function $\rho_{j}$ is a smooth
nonvanishing multiple of the product of boundary defining functions of $X$, i.e., 
$$
f^{*}(\rho_{j})=h\prod_{i=1}^{I} r_{i}^{e(i,j)}, \ h>0, \ e(i,j)\in \NN.
$$ 
The exponent set $e(i,j)$ is called the lifting matrix of $f$.
\item[(b-submersion)] At each boundary point $p\in \partial X$, the map ${}^{b}f_{*}: T_{p}X\rightarrow T_{f(p)}Y$ is surjective.
\item[(b-fibration)] The lifting matrix $(e(i,j))$ has the property that for each $i$ there is at most one $j$ such that 
$e(i, j)\neq 0.$  In other words, no boundary hypersurfaces of $X$ is mapped into a corner of $Y$.
\end{enumerate}
\end{definition}

\end{document}